\documentclass[11pt,a4paper]{article}
\pdfoutput=1

\usepackage[T1]{fontenc}
\usepackage[utf8]{inputenc}
\usepackage{amsmath,amssymb,amsthm,mathtools}
\usepackage{graphicx}
\usepackage{tikz}
\usepackage{pgfplots}
\usepackage{authblk}
\usepackage{enumitem}
\usepackage{url}
\usepackage[hidelinks]{hyperref}
\pgfplotsset{compat=1.18}
\usetikzlibrary{arrows}

\allowdisplaybreaks[1]

\newtheorem{definition}{Definition}[section]
\newtheorem{theorem}[definition]{Theorem}
\newtheorem{lemma}[definition]{Lemma}

\newtheorem{conjecture}[definition]{Conjecture}
\newtheorem{claim}[definition]{Claim}

\theoremstyle{remark}

\begin{document}

\title{Books versus Triangles near the $n/6$ Threshold}
\date{\today}
\author[1]{Kaizhe Chen}
\author[2]{Jie Ma}
\author[2]{Tianhen Wang}
\affil[1]{School of the Gifted Young, University of Science and Technology of China, Hefei, Anhui, 230026, China}
\affil[2]{School of Mathematical Sciences, University of Science and Technology of China, Hefei, Anhui, 230026, China}
\maketitle

\begin{abstract}
The book number \(b(G)\) of a graph \(G\) is the maximum number of triangles sharing a common edge. A strengthening of Mantel's theorem due to Rademacher states that every \(n\)-vertex graph with more than \(\lfloor n^2/4\rfloor\) edges contains at least \(\lfloor n/2\rfloor\) triangles. Another strengthening, initiated by Erd\H{o}s, asserts that every such graph $G$ satisfies \(b(G)\ge n/6\).
Motivated by these results, Mubayi studied the tradeoff between the total number of triangles and the book number in such graphs, and asymptotically resolved the problem when \(n/4\le b(G)\le n/2\). 
Conlon, Fox, and Sudakov conjectured that, for \(n/6\le b< n/4\), every \(n\)-vertex graph with at least \(\lfloor n^2/4\rfloor\) edges and book number at most \(b\), other than the balanced complete bipartite graph, has at least \(b^2(n-4b)\) triangles, with equality only for the blow-up \(S_{b,n}\) of the \(3\)-prism.
They proved the conjecture when \(b\) lies in an interval with endpoint \(n/4\), and also at the endpoint \(b=n/6\), where they asked whether it remains valid in an interval containing this endpoint.
In this paper, we answer this question affirmatively. We show that there exists a constant \(\varepsilon>0\) such that the conjecture holds for all \(n/6\le b\le (1/6+\varepsilon)n\). Our proof first establishes a stability theorem showing that every extremal graph is close to a blow-up of the \(3\)-prism, and then uses a detailed parameter analysis to force the exact six-partite structure.
\end{abstract}

\section{Introduction}
Mantel's theorem~\cite{Mantel07}, one of the cornerstones of extremal graph theory, states that the balanced complete bipartite graph is the unique triangle-free graph on $n$ vertices with $\lfloor n^2/4\rfloor$ edges. Thus an $n$-vertex graph $G$ with at least $\lfloor n^2/4\rfloor+1$ edges must contain a triangle. Two classical strengthenings investigate how robust this conclusion is. 
A {\it book} of size \(k\) in a graph is a collection of \(k\) triangles sharing a common edge. The {\it book number} of a graph \(G\), denoted \(b(G)\), is the largest size of a book in \(G\), and \(t(G)\) denotes the number of triangles in \(G\). 
Rademacher, in unpublished work later revisited by Erd\H{o}s~\cite{Erdos62}, proved that such a graph $G$ satisfies $t(G)\geq \lfloor n/2\rfloor$; this is sharp, as shown by adding one edge to the larger class of a balanced complete bipartite graph. 
A different strengthening, initiated by Erd\H{o}s and proved by Edwards and independently by Khad\v ziivanov and Nikiforov~\cite{KN79}, states that such a graph \(G\) satisfies \(b(G)\geq n/6\).
This is also sharp, as witnessed by the balanced blow-up of the 3-prism (i.e., \(S_{n/6,n}\), as illustrated in Figure~\ref{Fig-S_bn}).

The extremal graph for Rademacher's theorem has only \(\lfloor n/2\rfloor\) triangles, but all of them form one large book, while the extremal graph for the second result has book number exactly \(\lceil n/6 \rceil\), but \(\Omega(n^3)\) triangles. Motivated by these observations, Mubayi~\cite{Mubayi12} studied the interplay between the global parameter \(t(G)\) and the local parameter \(b(G)\) for graphs with \(n\) vertices and at least \(\lfloor n^2/4\rfloor\) edges. He proved the following asymptotically sharp result when the book number lies between \(n/4\) and \(n/2\).

\begin{theorem}[Mubayi~\cite{Mubayi12}]
Fix $\alpha\in(1/2,1)$ and $\delta>0$. Then there exists $n_0$ such that every $n$-vertex graph $G$ with $n>n_0$, at least $\lfloor n^2/4\rfloor+1$ edges, and $b(G)<\alpha n/2$ satisfies
\[
   t(G)> (\alpha(1-\alpha)-\delta)\frac{n^2}{4}.
\]
\end{theorem}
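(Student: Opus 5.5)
Since the paper cites this result from Mubayi rather than proving it, I sketch a plausible route.

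The plan rests on two tools: the triangle removal lemma and stability for Mantel's theorem. Set $\beta=b(G)/n<\alpha/2$. Every edge $uv$ lies in at most $b(G)$ triangles, so $\sum_{uv\in E}|N(u)\cap N(v)|=3t(G)$. For every edge, $d(u)+d(v)-n\le |N(u)\cap N(v)|\le b(G)$, which gives $d(u)+d(v)\le n+b(G)$ on edges.

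First suppose $t(G)\le \eta n^3$ for a small $\eta$. By the removal lemma, deleting $o(n^2)$ edges makes $G$ triangle-free. Since $e(G)>n^2/4$, the Füredi/Erdős–Simonovits stability theorem puts $G$ within $o(n^2)$ edges of a complete bipartite graph with parts $A,B$ of sizes $n/2\pm o(n)$. Call the edges inside $A$ or $B$ bad. Because $e(G)>\lfloor n^2/4\rfloor$, the number $m$ of bad edges is at least the number of missing cross edges plus one. Fix a bad edge $uv$ inside $A$. Its book contains the common neighbours in $B$, so
\[
|N_B(u)\cap N_B(v)|\le b(G)<\alpha n/2 .
\]
Hence $u$ and $v$ together miss at least about $(1-\alpha)n/2$ vertices of $B$. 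After cleaning (moving vertices with many same-side neighbours to the other side), each vertex has only $o(n)$ bad neighbours. Then each bad edge forces $\Omega((1-\alpha)n)$ missing cross edges at its endpoints. Charging these to vertices shows the bad edges span few vertices but need many missing cross edges. A weighted count then gives the triangle bound. Write $x_v$ for the number of non-neighbours of $v$ on the other side. A bad edge $uv$ lies in at least $n/2-x_u-x_v$ triangles, and $x_u+x_v\ge (1-\alpha)n/2$. So we optimize $\sum_{uv\ \mathrm{bad}}(n/2-x_u-x_v)$ subject to $\sum_v x_v\lesssim m$. The extremal configuration is a complete bipartite graph inside $A$ on parts of sizes about $\alpha n/2$ and $(1-\alpha)n/2$... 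Tuning this gives the count $\alpha(1-\alpha)n^2/4$ minus $\delta n^2$.

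If instead $t(G)\ge \eta n^3$, the bound holds trivially for large $n$, because $n^3\gg n^2$. Both cases together yield the statement, so the only real content is the sparse-triangle regime.

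\textbf{Main obstacle.} The hard part is the exact optimization in the stability regime: showing that the best tradeoff between the number of bad edges, the required missing cross edges, and the triangles on bad edges is $\alpha(1-\alpha)n^2/4$. I would first reduce to the case where all bad edges form a bipartite graph inside one part and solve the resulting quadratic program by convexity.
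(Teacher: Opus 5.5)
The paper only quotes this theorem from Mubayi and does not prove it, so your sketch has to stand on its own, and it has a genuine gap. Several pieces are fine: the reduction to $t(G)=o(n^3)$, the removal-lemma/stability step, and the inequality $m\ge \bar m+1$. What is missing is the step that produces $\alpha(1-\alpha)n^2/4$, and the structural claims you make around it are wrong.

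\textbf{The cleaning claim is false.} You claim that after cleaning every vertex has only $o(n)$ neighbours on its own side. The extremal graph itself violates this. Start from $K_{n/2,n/2}$ on $A\cup B$, delete about $(1-\alpha)n/2$ edges from a vertex $u\in A$ to $B$, and join $u$ to about $(1-\alpha)n/2+1$ vertices of $A$. Then $d_A(u)\approx(1-\alpha)n/2<d_B(u)\approx\alpha n/2$, so no moving helps. The triangles are the roughly $\alpha(1-\alpha)n^2/4$ triples $uwy$ with $w\in N_A(u)$ and $y\in N_B(u)$.

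In fact the claim contradicts your own charging. Let $H$ be the bad graph and suppose $d_H(v)=o(n)$ for all $v$. Then
$m(1-\alpha)n/2\lesssim\sum_{uv\in E(H)}(x_u+x_v)=\sum_v d_H(v)x_v\le o(n)\cdot 2\bar m<o(n)\cdot 2m$.
This is impossible for $m\ge 1$, yet $e(G)>\lfloor n^2/4\rfloor$ forces a bad edge. For the same reason the extremal configuration is a star. It is not a complete bipartite graph inside $A$ with parts of sizes $\alpha n/2$ and $(1-\alpha)n/2$: that would have $\Theta(n^2)$ bad edges, contradicting stability, and $\Theta(n^3)$ triangles.

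\textbf{The optimization is degenerate and never solved.} Your program minimizes $\sum_{uv}(n/2-x_u-x_v)$ subject to $x_u+x_v\ge(1-\alpha)n/2$ and $\sum_v x_v\le m-1$. A star whose centre has $x_u=|B|$ and $|B|+1$ leaves is feasible with objective $0$. You need a constraint tying $x_u$ to the own-side degree, such as the max-cut condition $d_H(u)\le|B|-x_u$ or a minimum-degree condition. Even then, ``tuning gives the count'' is exactly the content of the theorem and is not carried out.

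\textbf{One way to close the gap.} Suppose for contradiction that $t(G)\le\alpha(1-\alpha)n^2/4$.
\begin{enumerate}
\item Goodman's bound gives $t\ge\frac{n}{3}\bigl(e-\frac{n^2}{4}\bigr)$. Repeatedly delete vertices of degree below $(\frac12-\gamma)n$; each deletion raises the excess over $\lfloor n^2/4\rfloor$ by about $\gamma n$. By Goodman's bound this must stop after $O(1/\gamma)$ steps. The result $G'$ has minimum degree at least $(\frac12-\gamma)n'$ and still more than $\lfloor n'^2/4\rfloor$ edges.
\item Take a max-cut $X\cup Y$ of $G'$. By stability it is near-balanced with $o(n^2)$ bad edges, and there is a bad edge $uv$, say inside $X$.
\item Since $|N_Y(u)\cap N_Y(v)|\le b$, one endpoint, say $u$, has $d_Y(u)\le(|Y|+b)/2$. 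By the minimum degree, $d_X(u)$ is therefore linear.
\item All but $o(n)$ vertices of $X$ miss only $O(\gamma n)$ vertices of $Y$. Using such a neighbour $w\in N_X(u)$ in the book bound gives $d_Y(u)\le b+O(\gamma n)$.
\item The triangles $uwy$ over such $w$ number at least $(d_X(u)-o(n))(d_Y(u)-O(\gamma n))$.
\item We have $d_X(u)+d_Y(u)\ge(\frac12-\gamma)n'$, and $d_X(u)\le d_Y(u)\lesssim\alpha n/2$ by max-cut and the previous bound. Under these constraints the product is at least $(\alpha(1-\alpha)/4-O(\gamma))n^2$.
\end{enumerate}
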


Mubayi also showed that the minimum possible number of triangles changes from quadratic to cubic when the book number drops below \(n/4\). 
Since the theorem of Edwards and Khad\v{z}iivanov--Nikiforov gives the lower bound \(b(G)\geq n/6\), this leaves the remaining range \(n/6\leq b<n/4\). 
Conlon, Fox, and Sudakov~\cite{CFS20} proposed the following conjecture giving an exact answer in this range.

\begin{conjecture}[Conlon, Fox, and Sudakov~\cite{CFS20}, see Conjecture 1.1]\label{MainConj}
If $n/6\le b<n/4$, then every graph on $n$ vertices with at least $\lfloor n^2/4\rfloor$ edges and book number at most $b$ which is not the balanced complete bipartite graph has at least $b^2(n-4b)$ triangles, with equality if and only if the graph is $S_{b,n}$.
\end{conjecture}

Here, the graph $S_{b,n}$ denotes the following blow-up of the $3$-prism. Its vertex set is the disjoint union of six parts $U_1,U_2,U_3,V_1,V_2,V_3$, where
\[
 |U_1|=|U_2|=|V_1|=|V_2|=b,
 \qquad |U_3|=\left\lfloor\frac{n-4b}{2}\right\rfloor,
 \qquad |V_3|=\left\lceil\frac{n-4b}{2}\right\rceil .
\]
For $i\ne j$, the pair $U_i,U_j$ is complete and the pair $V_i,V_j$ is complete; for each $i$, the pair $U_i,V_i$ is complete; and there are no other edges. When $n/6\le b\le n/4$, this graph has $n$ vertices, $\lfloor n^2/4\rfloor$ edges, book number $b$, and exactly $b^2(n-4b)$ triangles.

\begin{figure}[htbp]
    \centering
    \begin{tikzpicture}[
        part/.style={draw=black, rounded corners=10pt, inner sep=10pt, minimum width=2cm, minimum height=1cm, fill=white}, scale=0.7
    ]
    \coordinate (U1) at (0, 2);
    \coordinate (U2) at (-3, 0);
    \coordinate (U3) at (3, 0);
    \coordinate (V1) at (0, -2);
    \coordinate (V2) at (-3, -4);
    \coordinate (V3) at (3, -4);
    
    \draw[black, dashed] (U1) -- (V1);
    \draw[black] (U2) -- (V2);
    \draw[black] (U3) -- (V3);
    \draw[black] (U1) -- (U2);
    \draw[black] (U2) -- (U3);
    \draw[black] (U3) -- (U1);
    \draw[black, dashed] (V1) -- (V2);
    \draw[black] (V2) -- (V3);
    \draw[black, dashed] (V1) -- (V3);

    \node[part] at (U1) {$U_1$};
    \node[part] at (U2) {$U_2$};
    \node[part] at (U3) {$U_3$};
    \node[part] at (V1) {$V_1$};
    \node[part] at (V2) {$V_2$};
    \node[part] at (V3) {$V_3$};
    
    \end{tikzpicture}
    \caption{The extremal graph $S_{b,n}$. Each solid or dashed line represents a complete bipartite graph.}
    \label{Fig-S_bn}
\end{figure}

Conlon, Fox, and Sudakov~\cite{CFS20} proved Conjecture~\ref{MainConj} when $(1/4-1/2000)n\le b<n/4$ and when $b=n/6$. 
They also proved a quantitative lower bound near the endpoint $n/6$: 
let $G$ be an $n$-vertex graph with at least $n^2/4$ edges which is not the balanced complete bipartite graph.
If $\eta>0$ is sufficiently small and $b(G)\le (1/6+\eta)n$, then $G$ has at least $\big(\frac{1}{108}-O(\eta^{1/3})\big)n^3$ triangles. 
This has the correct leading term, but it is weaker than the conjectured value in the case $b=(1/6+\eta)n$, namely
\[
   b^2(n-4b)=\left(\frac{1}{108}-\eta^2-4\eta^3\right)n^3,
\]
because the error term $O(\eta^{1/3})$ is much larger than the conjectural loss $\eta^2+4\eta^3$.
In the concluding remarks of~\cite{CFS20}, the authors asked whether ``there is some \(\varepsilon>0\) such that the conjecture holds for all \(n/6\leq b\leq (\frac16+\varepsilon)n\).'' 

Our main result gives an affirmative answer to the above question.

\begin{theorem}\label{MainTheorem}
There exists an absolute constant \(\varepsilon>0\) such that Conjecture~\ref{MainConj} holds for all \(n/6\leq b\leq (1/6+\varepsilon)n\).
\end{theorem}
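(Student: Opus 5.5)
We argue by contradiction and work in two stages, first stability, then exactness. Fix $b=(1/6+\eta)n$ with $0\le\eta\le\varepsilon$. Let $G$ be an $n$-vertex graph with $e(G)\ge\lfloor n^2/4\rfloor$ and $b(G)\le b$, which is not $K_{\lfloor n/2\rfloor,\lceil n/2\rceil}$, and assume $t(G)\le b^2(n-4b)$. We may delete edges as long as we keep $\lfloor n^2/4\rfloor$ edges. Deletion does not increase $t$ or $b$, so we may assume $e(G)=\lfloor n^2/4\rfloor$. We must exclude the case where the deletions end at the bipartite graph; this is handled separately, since it can only happen when $G$ contains a large bipartite subgraph. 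Since $t(G)\le(1/108)n^3$, the quantitative bound of Conlon--Fox--Sudakov quoted above already forces $t(G)=(1/108-O(\eta^{1/3}))n^3$. In particular $G$ is ``tight'': almost every edge lies in close to $b$ triangles. Indeed, $3t(G)=\sum_{uv\in E}d(uv)\le e(G)\,b\approx n^3/24\cdot(1/6+\eta)\cdot 4/... $. More precisely, $\sum_{uv\in E}d(uv)=3t$ and each $d(uv)\le b$, so the number of edges $uv$ with codegree $d(uv)<b-\gamma n$ is $O((\eta^{1/3}/\gamma)n^2)$.

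\textbf{Stability step.} Using the codegree tightness, we show that $G$ is $o_\varepsilon(1)n^2$-close to a blow-up of the $3$-prism with parts of sizes $(1/6+o(1))n$. The plan is to apply the regularity lemma and pass to a weighted reduced graph $R$. In $R$, the triangle density, the edge density $1/2$, and the fractional codegree bound $\le 1/6+o(1)$ are inherited. We then solve this continuous problem: among weighted graphs (graphons) with edge density $\ge 1/2$ and all codegrees $\le 1/6$, the triangle density is at most $1/108\cdot 6$, with near-equality only near the balanced prism blow-up. This is the graphon version of the Edwards/Khad\v ziivanov--Nikiforov theorem together with its equality case. We would argue via local perturbations: in an extremal configuration every edge has codegree exactly $1/6$ and the edge count is tight. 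A linear-algebraic argument on the neighborhood structure then identifies the prism as the unique extremal configuration. Removal-lemma cleanup converts this into a partition $U_1,U_2,U_3,V_1,V_2,V_3$ of $V(G)$ with $o(n^2)$ edges differing from the prism pattern.

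\textbf{Exact step.} Now take the partition minimizing the number of wrong pairs. Let $a_i=|U_i|$ and $c_i=|V_i|$, let $x$ be the number of missing ``correct'' pairs, and let $y$ be the number of ``bad'' edges. First we clean vertices. A vertex with $\Omega(n)$ wrong adjacencies is either moved or shown to create an edge of codegree $>b$, or to lose many triangles. Next, every bad edge $uv$ lies in $\Omega(n)$ triangles with the prism structure; combined with $b(G)\le b$, this forces for each correct edge between parts $P,Q$ the inequality $|N(P)\cap N(Q)|-(\text{missing})\le b$. These give linear constraints such as $a_1+a_2\lesssim$ for the $U_3$-edges, i.e.\ each ``rung'' part together with the edges forcing codegree. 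Counting,
\[
t(G)\ \ge\ \sum_{\text{prism triangles}}|P||Q||R| \;-\; O(n)\,x \;+\;\Omega(n)\,y,
\]
and the edge count $e(G)=\lfloor n^2/4\rfloor$ ties $x$ to $y$ and to the part sizes.

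The heart of the argument, and the main obstacle I expect, is the parameter optimization. We must minimize the prism triangle count $a_1a_2a_3+c_1c_2c_3$ (plus the correction terms) subject to the codegree constraints (for example $a_3\le b$ from the $U_1U_2$ edges, $c_i\le b$, and the cross constraints for $U_iV_i$ edges such as $a_j+\dots$) and the edge constraint. We need to show the unique minimizer is $(b,b,\lfloor(n-4b)/2\rfloor;b,b,\lceil(n-4b)/2\rceil)$ up to symmetry, with value $b^2(n-4b)$. The defect terms must be charged precisely, to order $\eta^2n^3$ rather than $\eta^{1/3}n^3$, which is why near-$n/6$ slack is essential. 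I would first do the exact optimization with $x=y=0$ by Lagrange/convexity on the six part sizes. I would then show that each missing edge costs at most $(1/6+O(\varepsilon))n$ triangles, while each extra edge it permits gains at least $(1/3-O(\varepsilon))n$. Hence any deviation strictly increases $t$, giving equality only for $S_{b,n}$.
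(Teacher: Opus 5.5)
\textbf{Gap in the stability step.} The stability step rests on a false claim. You assert that $t(G)\approx n^3/108$ forces almost every edge to have codegree close to $b$. But already in $S_{n/6,n}$ the $n^2/12$ rung edges between $U_i$ and $V_i$ lie in no triangle. Indeed $\sum_{uv\in E}d(u,v)=3t(G)\approx n^3/36$, while $e(G)\,b\approx n^3/24$, so the claim fails for the extremal graph itself.

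For the same reason, your characterisation of the extremal configuration (``every edge has codegree exactly $1/6$'') is contradicted by the prism. The continuous statement you plan to use also has its inequality reversed. What is needed is a \emph{lower} bound on triangle density, and it must exclude the bipartite graphon, which meets both constraints with no triangles. So the stability statement, which is the whole content of Section~\ref{sec:structure}, is asserted rather than proved.

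The paper avoids regularity entirely. The Bollob\'as--Nikiforov inequality together with $(6b-n)t(G)=O(\varepsilon n^4)$ gives two facts:
\begin{itemize}
\item all but $O(\varepsilon^{1/3}n)$ vertices have degree $n/2\pm\varepsilon^{1/3}n$;
\item all but $O(\varepsilon^{1/3}n^3)$ triangles $xyz$ have $e(N(x,y))\le\varepsilon^{1/3}n^2$.
\end{itemize}
The six parts are then read off from the neighbourhoods of two such good triangles. A cleaning step makes $G-R$ an \emph{exact} spanning subgraph of a prism blow-up with minimum-degree conditions, not merely $o(n^2)$-close to one.

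\textbf{Gaps in the exact step.} Your exact step lacks the two mechanisms that drive the paper's argument.

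First, an exceptional vertex need not be movable, create a large book, or lose triangles. A vertex joined to all of $U_1\cup V_2$ lies in no triangle and every edge at it has codegree $0$, yet it fits no part. Its only cost is its degree deficit, and the paper charges it exactly that way:
\begin{itemize}
\item exceptional vertices of degree below $(5/12+C_1\varepsilon^{1/12})n$ are deleted;
\item this forces the remaining $n_1=n-r$ vertices to span at least $\lfloor n_1^2/4\rfloor+n_1r/12-O(\varepsilon^{1/12}n_1r)$ edges (the inequality $H_1$);
\item exceptional vertices of degree at least $(5/12+C_1\varepsilon^{1/12})n$ are shown to fit a part (Lemma~\ref{Claim:move-back}).
\end{itemize}
All error terms carry a factor $r$, which is what makes the case $r=0$ exact.

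Second, you treat $a_3\le b$ and $c_i\le b$ as hard constraints. But a part can exceed $b$ once prism edges are missing, and the case $a_1\ge b+1$ is exactly where the difficulty lies. The missing idea is a quantitative conversion of the book bound into missing edges. For $u\in W_2'$ and $v\in N_{W_3'}(u)$, the vertex $v$ must miss at least $d_{W_1'}(u)-b$ vertices of $W_1'$. Summing gives at least $a_3(a_1-b)$ missing edges inside the first triangle (the inequality $H_2$).

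Only with $F$ (each missing edge destroys at most $a_1$ triangles), $H_1$ and $H_2$ does the optimisation become well posed. Even then the paper needs two separate arguments:
\begin{itemize}
\item for $a_1\le b$, it forces the $S_{b,n}$ vector;
\item for $a_1\ge b+1$, a monotone adjustment shows that $F$ and $H_2$ are incompatible.
\end{itemize}
Your closing heuristic (each missing edge costs at most $n/6$ triangles while each extra edge gains $n/3$) has no counterpart here. After cleaning there are no non-prism edges to trade against; the real trade-off is between part sizes and missing prism edges.
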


The rest of the paper is organized as follows. In Section~2, we introduce notation and collect several auxiliary results used throughout the proof. Section~3 gives an overview of the argument and explains the main steps in the proof of Theorem~\ref{MainTheorem}. In Section~4, we establish a stability theorem showing that every extremal graph is close to a blow-up of the \(3\)-prism. In Section~5, we use the exceptional vertices to derive several key parameter inequalities. Finally, in Section~6, we complete the proof of Theorem~\ref{MainTheorem} by showing that these inequalities force the unique extremal configuration \(S_{b,n}\).

\section{Preliminaries}

For a graph $G$, a subset $S \subseteq V(G)$, and vertices $x, y \in V(G)$, we write $N_G(x)$ for the neighborhood of $x$ and $d_G(x) = |N_G(x)|$ for its degree. Let $E_G(S)$ be the set of edges with both ends in $S$ and $e_G(S) = |E_G(S)|$. The set of common neighbors of $x$ and $y$ is denoted by $N_G(x, y)$, and $d_G(x, y) = |N_G(x, y)|$. When the host graph $G$ is clear from the context, we omit the subscript. For a given set $S$, we write $N_S(x)$ for the set of neighbors of $x$ within $S$ and $d_S(x) = |N_S(x)|$. We write $N_S(x, y)$ for the set of common neighbors of $x$ and $y$ inside $S$ and $d_S(x, y)=|N_{S}(x,y)|$.

Given a graph $H$, if a graph $G$ can be obtained by replacing each vertex \(v \in V(H)\) with an independent set \(I_v\) and replacing each edge \(uv \in E(H)\) with a complete bipartite graph between \(I_u\) and \(I_v\), then $G$ is called a {\it blow-up} of \(H\). Note that the graph $S_{b,n}$ defined in the introduction is a blow-up of the $3$-prism.

Let $f, g: \mathbb{Z}^+ \to \mathbb{R}$ be two functions. We write $f = O(g)$ if there exists a constant $C$ such that $|f(n)| \le C g(n)$ for all $n \in \mathbb{Z}^+$.
For any $6$-dimensional real vector $\mathbf{x}=(x_i)_{1\le i\le 6}$, we write $\|\mathbf{x}\|_1\coloneqq \sum_{i=1}^6 x_i$.
For a positive integer \(k\), we write \([k]\) for the set \(\{1,2,\ldots,k\}\).

The following Rearrangement Inequality (see e.g.~\cite{HLP}) is used several times in our later proof.

\begin{lemma}[Rearrangement Inequality]
    Let $a_1,a_2,a_3,a_4,a_5,a_6$ be real numbers such that $a_1\ge a_2\ge a_3$.
    Let $a_4',a_5',a_6'$ be a permutation of $a_4,a_5,a_6$ such that $a_4'\ge a_5'\ge a_6'$. Then,
    $$a_1a_4'+a_2a_5'+a_3a_6'\ge a_1a_4+a_2a_5+a_3a_6,$$
    with equality if and only if $(a_i-a_j)(a_{i+3}-a_{j+3})\ge 0$ for all $1\le i< j\le 3$.
\end{lemma}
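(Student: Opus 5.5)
The plan is a direct Abel-summation argument that proves the inequality and the equality characterization together. Set $S' = a_1a_4'+a_2a_5'+a_3a_6'$ and $S = a_1a_4+a_2a_5+a_3a_6$. For $k\in\{1,2,3\}$ define the partial sums
\[
P_k=\sum_{i=1}^{k} a_{i+3}', \qquad Q_k=\sum_{i=1}^{k} a_{i+3}.
\]
Summation by parts gives
\[
S'-S=(a_1-a_2)(P_1-Q_1)+(a_2-a_3)(P_2-Q_2)+a_3(P_3-Q_3).
\]
Since $(a_4',a_5',a_6')$ is a permutation of $(a_4,a_5,a_6)$, we have $P_3=Q_3$, so the last term vanishes.

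Next I show $P_k\ge Q_k$ for $k=1,2$. Indeed, $P_k$ is the sum of the $k$ largest values among $a_4,a_5,a_6$. $Q_k$ is the sum of some $k$ of them. Combined with $a_1-a_2\ge 0$ and $a_2-a_3\ge 0$, this gives $S'\ge S$. It also shows that equality holds iff, for every $k\in\{1,2\}$ with $a_k>a_{k+1}$, we have $Q_k=P_k$.

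The step needing the most care is translating this condition into the stated pairwise condition. I will use the following elementary fact. If some element of $\{a_4,\dots,a_{k+3}\}$ is strictly smaller than some element of $\{a_{k+4},\dots,a_6\}$, then swapping them strictly increases the sum. Hence $Q_k<P_k$. Conversely, if every element of the first set is at least every element of the second, then $Q_k=P_k$.

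Suppose first that $(a_i-a_j)(a_{i+3}-a_{j+3})\ge0$ for all $1\le i<j\le 3$. Take $k$ with $a_k>a_{k+1}$. For any $i\le k<j$ we have $a_i\ge a_k>a_{k+1}\ge a_j$, so the pairwise condition forces $a_{i+3}\ge a_{j+3}$. By the fact above, $Q_k=P_k$, and so equality holds.

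Conversely, suppose equality holds but some pair $i<j$ has $(a_i-a_j)(a_{i+3}-a_{j+3})<0$. Since $a_i\ge a_j$, this means $a_i>a_j$ and $a_{i+3}<a_{j+3}$. Because $a_i>a_j$, some $k$ with $i\le k<j$ satisfies $a_k>a_{k+1}$. For that $k$, $a_{i+3}$ lies in the first $k$ entries and $a_{j+3}$ lies outside them, and $a_{i+3}<a_{j+3}$. The swapping fact then gives $Q_k<P_k$, contradicting the equality criterion. This completes the plan.
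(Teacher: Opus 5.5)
Your proof is correct and complete, and it handles the equality case with ties properly. The paper gives no proof of this lemma; it quotes it as the classical rearrangement inequality with a pointer to Hardy--Littlewood--P\'olya, so your argument is a genuinely independent route.

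The standard textbook proof is an exchange argument. If $a_i>a_j$ but $a_{i+3}<a_{j+3}$, transposing $a_{i+3}$ and $a_{j+3}$ raises the sum by $(a_i-a_j)(a_{j+3}-a_{i+3})>0$. Hence a maximizing arrangement must be similarly ordered. One must then check separately that ties among $a_1,a_2,a_3$ or among $a_4,a_5,a_6$ do not affect the value, so that every similarly ordered arrangement attains the same sum.

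Your Abel-summation identity
\[
S'-S=(a_1-a_2)(P_1-Q_1)+(a_2-a_3)(P_2-Q_2)
\]
instead writes the gap as an explicit sum of nonnegative terms. The inequality and the equality criterion ($Q_k=P_k$ whenever $a_k>a_{k+1}$) then follow at once. Ties are absorbed automatically, because the coefficient $a_k-a_{k+1}$ vanishes exactly where a tie would make the comparison $P_k$ versus $Q_k$ irrelevant.

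So the exchange argument is slightly quicker for the bare inequality. Your decomposition is more transparent for the precise ``if and only if'' statement, which is the part the paper actually relies on at the end of the proof of Lemma~\ref{aleb}.
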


Given a graph $G$, denote by $k_4(G)$ the number of $4$-cliques in $G$ and by $k_4^{(3)}(G)$ the number of induced subgraphs in $G$ that are isomorphic to a triangle together with an isolated vertex. The following result of Bollob\'{a}s and Nikiforov \cite{BN05} will be needed in our proof, which provides a quantitative relation between $t(G)$, $b(G)$, and $k_4(G)$.  

\begin{theorem}[Bollob\'{a}s and Nikiforov~\cite{BN05}]\label{THM:BN}
    Let $G$ be an $n$-vertex graph. Then, 
    \[
    \left( 6t(G)-\sum_{v\in V(G)}d(v)^{2}+n\cdot e(G) \right)\cdot b(G)\ge n\cdot t(G)+8k_{4}(G)+2k_{4}^{(3)}(G)
    \]
\end{theorem}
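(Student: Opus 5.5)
We will prove the inequality by double counting: rewrite the left-hand side as a sum over edges, then lower-bound $b(G)$ edge by edge by the codegree $d(u,v)$.

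\textbf{Step 1: rewrite the left-hand side as a sum over edges.} For every edge $uv$ let $m(u,v)$ be the number of vertices adjacent to neither $u$ nor $v$. Then $m(u,v)=n-|N(u)\cup N(v)|=n-d(u)-d(v)+d(u,v)$; note that $u,v$ themselves are not counted, since $v\in N(u)$ and $u\in N(v)$. Summing over edges uses two standard identities:
\[
\sum_{uv\in E(G)}\bigl(d(u)+d(v)\bigr)=\sum_{v}d(v)^2,
\qquad
\sum_{uv\in E(G)}d(u,v)=3t(G).
\]
These give
\[
6t(G)-\sum_{v}d(v)^2+n\,e(G)=\sum_{uv\in E(G)}\bigl(2d(u,v)-d(u)-d(v)+n\bigr)=\sum_{uv\in E(G)}\bigl(d(u,v)+m(u,v)\bigr).
\]
Both summands are nonnegative. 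Since $b(G)\ge d(u,v)$ for every edge $uv$, we may replace $b(G)$ edge by edge and obtain
\[
\text{LHS}\ \ge\ \sum_{uv\in E(G)} d(u,v)\bigl(d(u,v)+m(u,v)\bigr).
\]
It therefore suffices to show that this last sum equals $n\,t(G)+8k_4(G)+2k_4^{(3)}(G)$.

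\textbf{Step 2: interpret the sum as a count over triangles.} Expand $d(u,v)^2$ as the number of ordered pairs $(z,w)$ with $z,w\in N(u,v)$. Expand $d(u,v)m(u,v)$ as the number of pairs $(z,w)$ with $z\in N(u,v)$ and $w$ adjacent to neither $u$ nor $v$. In both cases $uvz$ is a triangle $T$.

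Regroup the count by the triangle $T$ and the vertex $w\in V(G)$. The sum becomes $\sum_T\sum_w c(T,w)$, where $c(T,w)$ is the number of edges $xy$ of $T$ such that $w\ne x,y$ and $w$ is adjacent to both $x,y$ or to neither.

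\textbf{Step 3: compute $c(T,w)$ case by case.}
\begin{itemize}[leftmargin=*]
\item If $w\in T$, only the opposite edge qualifies, since $w$ is adjacent to both its ends. So $c=1$.
\item If $w\notin T$ has exactly $k$ neighbours in $T$:
\begin{itemize}
\item $k=0$: all three edges qualify ("neither"), so $c=3$.
\item $k=1$: exactly the edge avoiding the neighbour qualifies, so $c=1$.
\item $k=2$: exactly the edge joining the two neighbours qualifies, so $c=1$.
\item $k=3$: all three edges qualify, so $c=3$.
\end{itemize}
\end{itemize}
Hence $c(T,w)=1$ for every $w$, except that $c(T,w)=3$ when $w\notin T$ and $k\in\{0,3\}$.

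Summing the baseline value $1$ over all $n$ vertices and all triangles gives $n\,t(G)$. The excess $2$ occurs in two situations:
\begin{itemize}[leftmargin=*]
\item $k=3$: the pair $(T,w)$ spans a $K_4$. Each $K_4$ arises from exactly $4$ such pairs, contributing $2\cdot 4k_4(G)=8k_4(G)$.
\item $k=0$: the pair $(T,w)$ spans an induced triangle plus an isolated vertex. Each such subgraph arises exactly once, contributing $2k_4^{(3)}(G)$.
\end{itemize}
This yields exactly $n\,t(G)+8k_4(G)+2k_4^{(3)}(G)$, completing the proof.

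\textbf{Where errors are most likely.} The only delicate point is the bookkeeping in Step 3: the diagonal terms $z=w$ in $d(u,v)^2$ and the treatment of vertices of $T$ itself. These are exactly what produce the uniform baseline contribution of $1$ per vertex, and hence the term $n\,t(G)$. I would double-check this by testing on $K_4$ and on a triangle plus an isolated vertex, where both sides can be computed directly.
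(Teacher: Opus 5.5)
The paper gives no proof of this statement. It is quoted from Bollob\'{a}s and Nikiforov and used as a black box in Lemma~\ref{Lemma:regular+good_triangle}, so there is no in-paper argument to compare against.

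Your argument is correct and complete:
\begin{itemize}
\item The rewriting $6t(G)-\sum_v d(v)^2+n\,e(G)=\sum_{uv\in E(G)}\bigl(d(u,v)+m(u,v)\bigr)$ is right.
\item The weights $d(u,v)+m(u,v)$ are nonnegative, so replacing $b(G)$ edgewise by $d(u,v)$ is legitimate.
\item Your case analysis of $c(T,w)$ is correct: the value is $1$ in general and $3$ exactly when $w\notin T$ is adjacent to all or none of $T$. Note that the restriction $w\neq x,y$ is automatic, since an endpoint is adjacent to exactly one end of the edge.
\item Together these give the exact identity $\sum_{uv\in E(G)} d(u,v)\bigl(n-d(u)-d(v)+2d(u,v)\bigr)=n\,t(G)+8k_4(G)+2k_4^{(3)}(G)$.
\item Your proposed sanity checks come out as expected: $24=24$ for $K_4$, and $6=6$ for a triangle plus an isolated vertex.
\end{itemize}

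This is the standard double-counting proof of the Bollob\'{a}s--Nikiforov inequality. It makes transparent that the only loss is the edgewise bound $d(u,v)\le b(G)$.
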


\section{Proof overview}\label{sec:overview}

In the rest of the paper, we prove Theorem~\ref{MainTheorem}. 
Throughout, we fix a copy $P$ of the $3$-prism with vertex set $[6]$ and edge set
\[
\{12,23,13,14,25,36,45,56,46\}.
\]
The case $b=n/6$ follows from Conlon, Fox, and Sudakov~\cite{CFS20}, so we shall assume throughout the proof that
\[
   \frac n6<b\le \left(\frac16+\varepsilon\right)n,
\]
where $\varepsilon>0$ is a sufficiently small absolute constant. Let $G$ be an $n$-vertex graph with
\[
   e(G)\ge \left\lfloor \frac{n^2}{4}\right\rfloor,\qquad b(G)\le b,
   \qquad t(G)\le b^2(n-4b),
\]
which is not the balanced complete bipartite graph. To prove the theorem, it is enough to show that $G$ is isomorphic to $S_{b,n}$. We argue by contradiction and assume that $G\not\cong S_{b,n}$.

The proof has three main parts. First, we prove a stability statement: after deleting a small exceptional set $R$, the remaining graph is a spanning subgraph of a blow-up of the $3$-prism, with all six parts close to size $n/6$ and with almost complete bipartite graphs along the edges of the prism. This is Lemma~\ref{Lemma:good-structure}, proved in Section~\ref{sec:structure}. Second, we show that vertices of $R$ with sufficiently large degree can be assigned to the six prism parts without creating forbidden adjacencies. This reduces the extremal problem to three inequalities involving the resulting part sizes; see Lemma~\ref{count} in Section~\ref{sec:parameters}. Finally, Section~\ref{sec:analysis} proves that these inequalities are incompatible unless the six parameters are exactly those of $S_{b,n}$, contradicting the standing assumption.

\section{A coarse blow-up structure}\label{sec:structure}

Our goal in this section is to prove the following lemma.
Recall that $V(P)=\{1,2,...,6\}$.

\begin{lemma}\label{Lemma:good-structure}
    There exists a subset $R \subseteq V(G)$ of size $O(\varepsilon^{1/12}n)$ such that $G \setminus R$ is a spanning subgraph of a blow-up of $P$. 
    Moreover, let $W_i \subseteq V(G)$ denote the corresponding vertex subsets for $i \in V(P)$. Then we have $\bigl| |W_i| - n/6\bigr| = O(\varepsilon^{1/12}n)$ for every $i\in V(P)$ and $d_{W_j}(x) \ge \bigl(1/6 - O(\varepsilon^{1/12})\bigr)n$ for every $ij \in E(P)$ and $x \in W_i$. 
\end{lemma}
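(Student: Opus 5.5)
We derive the stability statement from the Bollobás--Nikiforov inequality (Theorem~\ref{THM:BN}), combined with the hypotheses $e(G)\ge\lfloor n^2/4\rfloor$, $b(G)\le b\le(1/6+\varepsilon)n$ and $t(G)\le b^2(n-4b)\le n^3/108$.

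\textbf{Step 1: near-equality in Theorem~\ref{THM:BN}.}
Write $e=e(G)$ and $t=t(G)$. By Cauchy--Schwarz, $\sum_v d(v)^2\ge 4e^2/n$. Plugging this into Theorem~\ref{THM:BN} and dropping the nonnegative terms $8k_4+2k_4^{(3)}$ gives
\[
(6t-4e^2/n+ne)\,b\ \ge\ nt .
\]
Hence $t(n-6b)\ge b\,e(n-4e/n)$. Since $b>n/6$, the left side is nonpositive, so this yields a lower bound on $t$. The case $b=n/6$ recovers the Conlon--Fox--Sudakov argument. Here we instead track all the slack. With $b=(1/6+\eta)n$, $\eta\le\varepsilon$, the combination of the upper bound $t\le n^3/108$, the lower bound on $e$, and the inequality should force every dropped or relaxed term to be small:
\begin{itemize}[noitemsep]
\item $k_4(G)=O(\varepsilon n^4)$ and $k_4^{(3)}(G)=O(\varepsilon n^4)$;
\item $\sum_v (d(v)-n/2)^2=O(\varepsilon n^3)$, so almost all degrees are $(1/2\pm\varepsilon^{1/3})n$;
\item $t=(1/108-O(\varepsilon))n^3$;
\item almost every edge lies in $(1/6\pm O(\varepsilon^{c}))n$ triangles, because the inequality is essentially an averaging of $d(x,y)\le b$ over triangles.
\end{itemize}
Roughly, then, an $n^3$-proportion of the triangles have all three edges of codegree about $n/6$, and there are almost no $K_4$'s and almost no induced ``triangle plus isolated vertex'' configurations.

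\textbf{Step 2: building the partition.}
Since $k_4^{(3)}=O(\varepsilon n^4)$, for a typical triangle $xyz$ almost every other vertex is adjacent to at least one of $x,y,z$. Since $k_4$ is small, almost no vertex is adjacent to all three. Combining this with codegrees $\approx n/6$ and degrees $\approx n/2$, inclusion--exclusion shows that $V$ splits into the three common neighborhoods $N(x,y),N(y,z),N(x,z)$, each of size $\approx n/6$, and three ``private'' sets $N(x)\setminus N(y,z)$ etc. Each private set has size $n/2-n/3=n/6$, up to $O(\varepsilon^{c})n$ error. We fix such a typical triangle and call the six sets $W_1,W_2,W_3$ (the common neighborhoods) and $W_4,W_5,W_6$ (the private sets, matched so that $W_i$ and $W_{i+3}$ correspond to a prism edge). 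The adjacency pattern is then read off. The $K_4$-sparsity makes each $W_i$ nearly independent and forbids dense pairs along non-edges of $P$. The degree condition $d\approx n/2$ together with $|W_i|\approx n/6$ forces each vertex to be almost complete to the three parts adjacent to it in $P$, since that is the only way to reach degree $n/2$.

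\textbf{Step 3: cleaning.}
We put into $R$ every vertex that is atypical in any of these counts: wrong degree, too many $K_4$'s or too many induced $K_3+K_1$ through it, or too few neighbors in some $W_j$ with $ij\in E(P)$. Markov's inequality applied to the $O(\varepsilon n^4)$ and $O(\varepsilon n^3)$ bounds keeps $|R|=O(\varepsilon^{c'})n$.

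Then we remove the remaining edges inside parts and along non-edges of $P$. Suppose such an edge $uv$ survives, with $u\in W_i$ and $v\in W_j$, $ij\notin E(P)$. Since $u$ and $v$ each have nearly complete neighborhoods in their three $P$-neighbor parts, and two non-adjacent (or equal) vertices of the prism share at least one common neighbor, the edge $uv$ lies in at least $n/6-O(\varepsilon^{c})n$ triangles. With a little more care, it produces $\Omega(n^2)$ copies of $K_4$ or induced $K_3+K_1$ through $u$, contradicting that $u\notin R$. So no such edges survive, and $G\setminus R$ is a spanning subgraph of a blow-up of $P$.

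\textbf{Main obstacle.}
The hardest step is the quantitative bookkeeping in Steps 1--2. We must extract from a single inequality enough slack to control codegrees edge-by-edge, not just on average, and then propagate the errors through several Markov/averaging steps. Each step costs a root, which is why the final exponent degrades to $\varepsilon^{1/12}$. We would first attempt Step 1 with explicit constants, and fall back on a supersaturation/removal-type argument if the edge-wise codegree control fails.
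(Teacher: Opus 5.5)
\textbf{The main gap is Step~2.} The decomposition around one good triangle $xyz$ is fine once such a triangle is available: three common neighbourhoods and three private sets, each of size about $n/6$. But your claim that the adjacency pattern ``is then read off'' has no valid argument behind it, and this is where the lemma is actually proved.

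\begin{itemize}
\item The $K_4$-count only bounds $e(N(x,y))$, $e(N(y,z))$ and $e(N(x,z))$. So it makes the three common neighbourhoods nearly independent. It says nothing about edges inside the private set $N(x)\setminus(N(y)\cup N(z))$, or between $N(y,z)$ and the private set of $y$ (a non-edge of $P$).
\item ``Degree $\approx n/2$ forces near-completeness to the three $P$-neighbouring parts'' is circular, because it presupposes that almost no neighbours lie in the other parts.
\item Concretely, for a typical $u\in N(y,z)$, the bounds $d(u,y),d(u,z)\le b$ and $d(u)\approx n/2$ do force $u$ to be almost complete to the private set of $x$. They leave open whether the remaining $\approx 2b$ neighbours of $u$ lie in $N(x,y)\cup N(x,z)$ or in the private sets of $y$ and $z$.
\end{itemize}

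The paper resolves this with three ingredients absent from your plan.
\begin{itemize}
\item The edge-wise book bound applied inside the neighbourhood of a typical vertex (Lemma~\ref{Lemma:inside_neighborhood}).
\item A triangle count played against the \emph{upper} bound $t(G)\le n^3/108$ (Claims~\ref{Claim:size_of_W234} and~\ref{Claim:lower_bound_of_W14}). This shows that almost every vertex of $N(x_1)\setminus(N(x_2)\cup N(x_3))$ has almost all its neighbours outside $N(x_1)$, i.e.\ that the private sets really behave like rung parts.
\item The type argument of Claim~\ref{Claim:degree-condition}. It uses the near-complete bipartite graphs on $W_2'\cup W_3'$ and $W_5'\cup W_6'$ to decide which of $W_5,W_6$ is the rung partner of $W_2$.
\end{itemize}
Your plan uses $t\le n^3/108$ only to get near-equality in Theorem~\ref{THM:BN}. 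That near-equality holds for every graph with $b(G)\le(1/6+\varepsilon)n$ anyway, since $t\le\binom n3$. So nothing in your plan substitutes for the counting step.

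\textbf{Errors in Step~1.}
\begin{itemize}
\item Your rearrangement of Theorem~\ref{THM:BN} has the inequality reversed. It actually gives $t(6b-n)\ge b\,e(4e/n-n)$, which is vacuous for $e=\lfloor n^2/4\rfloor$.
\item Indeed $K_{n/2,n/2}$ attains equality in Theorem~\ref{THM:BN} with $t=0$. So no lower bound on $t$ can come from it, let alone $t=(1/108-O(\varepsilon))n^3$, which is stronger than the known Conlon--Fox--Sudakov bound.
\item You do need $t=\Omega(n^3)$ for your averaging to produce a good triangle. The paper imports this from Conlon--Fox--Sudakov.
\item ``Almost every edge lies in $(1/6\pm O(\varepsilon^c))n$ triangles'' is false. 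In $S_{b,n}$ the edges between $U_i$ and $V_i$, a third of all edges, lie in no triangle.
\item Re-proving Theorem~\ref{THM:BN} shows its slack is $\sum_{xy\in E(G)}\bigl(d(x,y)+m(x,y)\bigr)\bigl(b-d(x,y)\bigr)$, where $m(x,y)$ counts vertices adjacent to neither $x$ nor $y$. This yields only the triangle-weighted statement: almost every \emph{triangle} has all three codegrees close to $b$.
\end{itemize}

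\textbf{Error in Step~3.} A surviving edge $uv$ along a non-edge of $P$ cannot be detected by counting $K_4$'s or induced $K_3+K_1$'s through $u$. One edge contributes only $O(n^2)$ such copies, far below any per-vertex threshold of order $\varepsilon^{c}n^3$ that Markov's inequality permits. The right tool is the hard constraint $d(u,v)\le b$. Two distinct non-adjacent vertices of the prism have exactly \emph{two} common neighbours, not one, and a vertex with itself has three. So once near-completeness along prism edges is established, such $u,v$ have at least $(1/3-O(\varepsilon^{1/12}))n>b$ common neighbours and cannot be adjacent. This is exactly how the paper makes $G[W_1']$, $G[W_4']$ and the non-edge pairs empty at the end of the proof.
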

Before the proof, we need several auxiliary lemmas.  Let 
    \[
    R_{0}\coloneqq \left\{v\in V(G)\colon \left|d(v)-\frac{n}{2} \right| \ge \varepsilon^{1/3}n \right\}.
    \]
\begin{lemma}\label{Lemma:regular+good_triangle}
    We have $|R_{0}|=O(\varepsilon^{1/3}n)$. Moreover, all but at most $O(\varepsilon^{1/3}n^{3})$ triangles $xyz$ satisfy the property that $ e(N(x,y)), e(N(y,z)) $, and $ e(N(x,z)) $ are all at most $\varepsilon^{1/3} n^{2}$. 
\end{lemma}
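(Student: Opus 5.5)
The plan is to feed the assumptions $e(G)\ge\lfloor n^2/4\rfloor$, $b(G)\le b\le(1/6+\varepsilon)n$ and $t(G)\le b^2(n-4b)\le n^3/108$ into Theorem~\ref{THM:BN}. Because $6b(G)$ is only slightly larger than $n$, the term $6t(G)b(G)$ on the left nearly cancels $n\,t(G)$ on the right. What is left over will bound three quantities at once: the degree variance, $k_4(G)$, and $k_4^{(3)}(G)$.

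\emph{Rewriting the inequality.} Use the identity
\[
\sum_{v}d(v)^2=\sum_v\Bigl(d(v)-\tfrac n2\Bigr)^2+n\sum_v d(v)-\tfrac{n^3}{4}=\sum_v\Bigl(d(v)-\tfrac n2\Bigr)^2+2n\,e(G)-\tfrac{n^3}{4}.
\]
It gives
\[
\sum_v d(v)^2-n\,e(G)=\sum_v\Bigl(d(v)-\tfrac n2\Bigr)^2+n\,e(G)-\tfrac{n^3}{4}\ge \sum_v\Bigl(d(v)-\tfrac n2\Bigr)^2-O(n),
\]
where the last step uses $e(G)\ge\lfloor n^2/4\rfloor$. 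Substituting this into Theorem~\ref{THM:BN} and moving terms yields
\[
b(G)\sum_v\Bigl(d(v)-\tfrac n2\Bigr)^2+8k_4(G)+2k_4^{(3)}(G)\le \bigl(6b(G)-n\bigr)t(G)+O(n\,b(G)).
\]
Since $6b(G)-n\le 6\varepsilon n$ and $t(G)=O(n^3)$, the right-hand side is $O(\varepsilon n^4)$ once $n$ is large; the $O(n^2)$ error is negligible. If $b(G)$ happens to be smaller, $6b(G)-n$ is at most $6\varepsilon n$ anyway, and it is nonnegative by the Edwards--Khad\v ziivanov--Nikiforov bound (or it can simply be treated as a favourable sign).

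\emph{Bounding $|R_0|$.} We have $b(G)\ge n/6$, again by Edwards--Khad\v ziivanov--Nikiforov (valid since $G$ has at least $\lfloor n^2/4\rfloor$ edges and is not $K_{\lceil n/2\rceil,\lfloor n/2\rfloor}$; alternatively, if $b(G)$ were smaller one can argue directly). This gives $\sum_v(d(v)-n/2)^2=O(\varepsilon n^3)$. Each vertex of $R_0$ contributes at least $\varepsilon^{2/3}n^2$ to this sum, so $|R_0|=O(\varepsilon^{1/3}n)$. The same display also gives $k_4(G)=O(\varepsilon n^4)$.

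\emph{Triangles with a dense common neighbourhood.} For an edge $xy$, every edge $zw$ inside $N(x,y)$ spans a $K_4$ together with $x$ and $y$. Hence
\[
\sum_{xy\in E(G)} e(N(x,y))=6k_4(G)=O(\varepsilon n^4).
\]
So the number of edges $xy$ with $e(N(x,y))>\varepsilon^{1/3}n^2$ is $O(\varepsilon^{2/3}n^2)$. Each such edge lies in at most $b(G)\le n$ triangles. Therefore at most $O(\varepsilon^{2/3}n^3)\le O(\varepsilon^{1/3}n^3)$ triangles contain a bad edge, and every other triangle has all three of $e(N(x,y))$, $e(N(y,z))$, $e(N(x,z))$ at most $\varepsilon^{1/3}n^2$.

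The only delicate point is the bookkeeping in the first step. One must check that the integer rounding in $\lfloor n^2/4\rfloor$ and in $b$ versus $b(G)$ costs only lower-order terms, and that the coefficient multiplying the variance is bounded below by order $n$. Everything after that is direct averaging.
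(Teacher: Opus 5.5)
Your argument is correct and is essentially the paper's. You combine Theorem~\ref{THM:BN} with the identity $\sum_v d(v)^2-n\,e(G)=\sum_v (d(v)-n/2)^2+n\,e(G)-n^3/4$ and the bounds $6b-n\le 6\varepsilon n$, $t(G)\le n^3/108$ to control both the degree variance and $k_4(G)$. A double count of $K_4$'s then gives $O(\varepsilon^{2/3}n^3)$ bad triangles; you do this via bad edges and $\sum_{xy}e(N(x,y))=6k_4(G)$, the paper via a $6n$-fold count per bad triangle.

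There are two small differences in bookkeeping. First, the paper replaces $b(G)$ by the parameter $b>n/6$, which is valid because $6t(G)-\sum_v d(v)^2+n\,e(G)\ge 0$. It therefore never needs your appeal to Edwards--Khad\v ziivanov--Nikiforov, whose quoted form assumes strictly more than $\lfloor n^2/4\rfloor$ edges. Second, the paper absorbs the $O(n^2)$ rounding error using $\varepsilon n\ge 1/6$ rather than taking $n$ large.
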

\begin{proof}
    Let $T$ be the set of all triangles in $G$ that do not satisfy the desired property. Then for every $xyz\in T$, we have, say, $e(N(x,y))>\varepsilon^{1/3} n^{2}$, thus there are at least $\varepsilon^{1/3} n^{2}$ edges that form a $K_{4}$ with $xy$. Since each $K_{4}$ is counted in this way at most $6n$ times, we have $k_{4}(G)\ge |T|\cdot \varepsilon^{1/3} n/6$. 
    
    Thus, by Theorem~\ref{THM:BN}, the definition of $R_0$, and the assumption that $e(G)\ge \left\lfloor n^{2}/4 \right\rfloor\ge (n^{2}-1)/4$, we have 
        \begin{align*}
            (6b-n)t(G)& \ge b\sum_{v\in V(G)}d(v)^{2}-bn\cdot e(G)+8k_{4}(G) \ \\
            & =b\sum_{v\in V(G)} \left( d(v)-\frac{n}{2} \right)^{2}+bn\cdot e(G)-\frac{bn^{3}}{4}+8k_{4}(G) \\
            & \ge b|R_{0}|\varepsilon^{2/3}n^{2}-\frac{bn}{4}+4|T|\varepsilon^{1/3}n/3. 
        \end{align*}
    Note that the assumption $\frac{n}{6} < b \le \bigl(\frac{1}{6} + \varepsilon\bigr)n$ implies  $\varepsilon n\ge 1/6$, so $bn/4=O(n^2)\le O(\varepsilon n^4)$. It follows that
    \begin{align}\label{first1}
        (6b-n)t(G)\ge  b|R_{0}|\varepsilon^{2/3} n^{2}+4|T|\varepsilon^{1/3}n/3- O(\varepsilon n^4).
    \end{align}
    On the other hand, since $b\le (\frac{1}{6}+\varepsilon )n$ and $ t(G)\le b^{2}(n-4b)\le  n^{3}/108$, we have $(6b-n)t(G)\le O(\varepsilon n^{4})$. Combining it with inequality \eqref{first1}, we derive that $|R_{0}|=O(\varepsilon^{1/3}n)$ and $|T|=O(\varepsilon^{1/3}n^3)$. This completes the proof of Lemma~\ref{Lemma:regular+good_triangle}. 
\end{proof}

The following Lemma analyzes the (common) degree distribution of vertices relative to the neighborhood partition induced by any vertex outside $R_0$.

\begin{lemma}\label{Lemma:inside_neighborhood}
    Fix $u\in V(G)\setminus R_{0}$ and set $U:=N(u)\setminus R_{0}$ and $V:=V(G)\setminus (U\cup R_{0})$. Then 
    \begin{align}\label{pp1}
        \left|d_{U}(v)-\frac{n}{6}\right|=O(\varepsilon^{1/3}n) \ \text{ and }\ \left|d_{V}(v)-\frac{n}{3}\right|=O(\varepsilon^{1/3}n),
    \end{align}
        for every $v\in U$ with $d_{U}(v)>0$; and
    \begin{align}\label{pp2}
                d_{U}(v,w) = O(\varepsilon^{1/3}n)\ \text{ and }\ \left|d_{V}(v,w)- \frac{n}{6}\right|=O(\varepsilon^{1/3}n),
    \end{align}
    for any edge $vw\in E(U)$.
\end{lemma}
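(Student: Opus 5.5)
The plan is to run a single counting argument on the triangle formed by $u$, $v$ and a neighbour of $v$ inside $U$. Since $U$ is disjoint from $R_0$, every vertex in play has degree $n/2\pm\varepsilon^{1/3}n$. Every edge has codegree at most $b\le(1/6+\varepsilon)n$. I will show that these two facts together force all three codegrees of the triangle to be essentially exactly $n/6$ and the triple common neighbourhood to be tiny. Both \eqref{pp1} and \eqref{pp2} then follow by subtracting the contributions of $U$ and $R_0$. Throughout I use $|R_0|=O(\varepsilon^{1/3}n)$ from Lemma~\ref{Lemma:regular+good_triangle}.

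\textbf{Counting step.} Let $v\in U$ with $d_U(v)>0$, and pick $w\in U$ with $vw\in E(G)$. Then $uvw$ is a triangle and $u,v,w\notin R_0$. For $x\in V(G)$ let $m(x)\in\{0,1,2,3\}$ be the number of vertices among $u,v,w$ adjacent to $x$. Double counting gives
\[
\sum_x m(x)=d(u)+d(v)+d(w)\ge \tfrac32 n-3\varepsilon^{1/3}n,
\qquad
\sum_x \binom{m(x)}{2}=d(u,v)+d(u,w)+d(v,w)\le 3b\le \tfrac n2+3\varepsilon n .
\]
The pointwise inequality $\binom{m}{2}\ge m-1$ holds for all integers $m\ge0$, with equality exactly when $m\in\{1,2\}$. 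Moreover $\binom{m}{2}-(m-1)$ equals $1$ when $m\in\{0,3\}$. Summing over all $x$ gives
\[
\#\{x: m(x)\in\{0,3\}\}=\sum_x\Bigl(\tbinom{m(x)}{2}-m(x)+1\Bigr)\le \tfrac n2+3\varepsilon n-\tfrac32 n+3\varepsilon^{1/3}n+n=O(\varepsilon^{1/3}n).
\]
In particular the triple common neighbourhood $N(u,v,w)$ has size $O(\varepsilon^{1/3}n)$. The same computation shows that $\sum\binom{m}{2}\ge \frac n2-O(\varepsilon^{1/3}n)$. Each of the three codegrees is at most $b$, so each one is at least $\frac n2-2b-O(\varepsilon^{1/3}n)$. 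Hence
\[
d(u,v),\ d(u,w),\ d(v,w)=\tfrac n6\pm O(\varepsilon^{1/3}n).
\]

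\textbf{Deriving \eqref{pp1} and \eqref{pp2}.} Since $U=N(u)\setminus R_0$, we have $d_U(v)=d(u,v)-d_{R_0\cap N(u)}(v)$, so $d_U(v)=\frac n6\pm O(\varepsilon^{1/3}n)$. Then $d_V(v)=d(v)-d_U(v)-d_{R_0}(v)=\frac n3\pm O(\varepsilon^{1/3}n)$, which is \eqref{pp1}. For an edge $vw\in E(U)$ we have $N_U(v,w)\subseteq N(u,v,w)$, so $d_U(v,w)=O(\varepsilon^{1/3}n)$. Finally, $d_V(v,w)=d(v,w)-d_U(v,w)-d_{R_0}(v,w)=\frac n6\pm O(\varepsilon^{1/3}n)$, which gives \eqref{pp2}.

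The only step I regard as the real content is the counting step. The key observation is that the convexity gap $\binom{m}{2}-(m-1)$ exactly counts vertices with $m\in\{0,3\}$. This is what makes the sharp book bound $b\approx n/6$ combine with the degree condition $d\approx n/2$. Once that is in place, the lower bounds on the individual codegrees and the smallness of the triple intersection are immediate, and everything else is bookkeeping with $R_0$.
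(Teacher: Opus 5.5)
Your proof is correct and is essentially the paper's argument. Your identity $\sum_x\bigl(\binom{m(x)}{2}-m(x)+1\bigr)=\#\{x: m(x)\in\{0,3\}\}$ is the three-set inclusion--exclusion $|N(u)\cup N(v)\cup N(w)|\le n$, which is what the paper's chain $b\ge d_V(v,w)\ge d_V(v)+d_V(w)-|V|$ (with $d_U(v),d_U(w)\le b$ and $|V|\approx n-d(u)$) encodes. The only difference is cosmetic: your symmetric form bounds the triple common neighbourhood directly, whereas the paper obtains $d_U(v,w)=O(\varepsilon^{1/3}n)$ from $d_U(v,w)\le b-d_V(v,w)$.
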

\begin{proof}
    Since $u\notin R_0$ and $|R_0|=O(\varepsilon^{1/3}n)$ (Lemma \ref{Lemma:regular+good_triangle}), we have $\left| |U|-\frac{n}{2}\right|=O(\varepsilon^{1/3}n)$ and $\left| |V|-\frac{n}{2}\right| =O(\varepsilon^{1/3}n)$.
    Since every neighbor of $v$ in $U$ forms a triangle with edge $uv$, we have $d_{U}(v)\le b\le \left(\frac{1}{6}+\varepsilon\right) n$. For the lower bound of $d_{U}(v)$, take $w\in N_{U}(v)$, then similarly $d_{U}(w)\le \left(\frac{1}{6}+\varepsilon\right) n$. It follows that
    \begin{align}\notag
            \left(\frac{1}{6}+\varepsilon\right)n\ge b \ge d_{V}(v,w)&\ge d_{V}(v)+d_{V}(w)-|V| \\ \notag
            &\ge d(v)+d(w)-d_{U}(v)-d_{U}(w)-|V|- 2|R_0| \\ \label{dvw}
            &\ge \frac{n}{2}+\frac{n}{2}-d_{U}(v)-\frac{n}{6}-\frac{n}{2} - O(\varepsilon^{1/3}n), 
        \end{align}
        which implies $d_{U}(v)\ge \left(\frac{1}{6}-O(\varepsilon^{1/3}) \right)n$.  
        Since $|d(v)-\frac{n}{2}|=O(\varepsilon^{1/3}n)$ and $|R_0|=O(\varepsilon^{1/3}n)$, we find
        $\left|d_{V}(v)-\frac{n}{3}\right|=O(\varepsilon^{1/3}n)$.
        Moreover, inequality \eqref{dvw} shows that for every edge $vw\in E(U)$, we have 
         \begin{align*}
            d_{V}(v,w)
            \ge \frac{n}{3} - d_{U}(v) - O(\varepsilon^{1/3}n)\ge \frac{n}{6}-O(\varepsilon^{1/3}n). 
        \end{align*}
        So, $d_{U}(v,w)\le b-d_{V}(v,w)=O(\varepsilon^{1/3}n)$. This completes the proof of Lemma~\ref{Lemma:inside_neighborhood}.
\end{proof}

The following lemma provides the main technical analysis leading to the coarse blow-up structure we require.

\begin{lemma}\label{Lemma:roughly-good-structure}
    There exist subsets $W_{i}\subseteq V(G)$ for $i\in [6]$ such that the following holds. 
    \begin{enumerate}
        \item \label{ppp1} $\left| |W_{i}|-\frac{n}{6}\right|=O(\varepsilon^{1/3}n)$ for $i\in [6]$, and $\max\{|W_{2}\cap W_{3}|,|W_{5}\cap W_{6}|\}=O(\varepsilon^{1/3}n)$;
        \item \label{ppp1.5} $(W_{2}\cup W_{3}\cup W_{4}) \cap (W_{1}\cup W_{5}\cup W_{6}) =W_{1}\cap (W_{5}\cup W_{6})=W_{4}\cap (W_{2}\cup W_{3})=\emptyset$;
        \item \label{ppp2} $\left| d(x)- \frac{n}{2} \right|=O(\varepsilon^{1/3}n)$ for $x\in \cup_{i\in [6]}W_{i}$, $d_{W_{1}\cup W_{5}\cup W_{6} }(x)\ge \left(\frac{1}{2}-O(\varepsilon^{1/3})\right) n$ for $x\in W_{4}$, $d_{W_{1}\cup W_{5}\cup W_{6} }(x)\ge \left(\frac{1}{3}-O(\varepsilon^{1/3})\right) n$ for $x\in W_{2}\cup W_{3}$, $d_{W_{2}\cup W_{3}\cup W_{4} }(x)\ge \left(\frac{1}{2}-O(\varepsilon^{1/3})\right) n$ for $x\in W_{1}$, and $d_{W_{2}\cup W_{3}\cup W_{4} }(x)\ge \left(\frac{1}{3}-O(\varepsilon^{1/3})\right) n$ for $x\in W_{5}\cup W_{6}$;
        \item \label{ppp3} $e(W_{i})=O(\varepsilon^{1/3}n^{2})$ for $i\in [6]$;
        \item \label{ppp4} $e(W_{2}\cup W_{3}\cup W_{4} )\ge \left(\frac{1}{36}-O(\varepsilon^{1/3})\right) n^{2}$ and $e(W_{1}\cup W_{5}\cup W_{6} )\ge \left(\frac{1}{36}-O(\varepsilon^{1/3})\right) n^{2}$.
    \end{enumerate}
\end{lemma}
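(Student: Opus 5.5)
We build the six sets from two adjacent non-exceptional vertices, using Lemma~\ref{Lemma:inside_neighborhood} twice. Fix $u\notin R_0$ lying in a triangle $uvw$ that is good in the sense of Lemma~\ref{Lemma:regular+good_triangle}, with $v,w\notin R_0$ as well. Such a triangle exists because $t(G)\ge n/2$ is large compared to the $O(\varepsilon^{1/3}n^3)$ bad triangles plus those meeting $R_0$; if $t(G)$ is small we would first need the Conlon--Fox--Sudakov lower bound $t(G)\ge(1/108-O(\varepsilon^{1/3}))n^3$, which we may quote. Put $U=N(u)\setminus R_0$ and $V=V(G)\setminus(U\cup R_0)$, so $|U|,|V|=n/2\pm O(\varepsilon^{1/3}n)$.

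Inside $U$ we take $W_1$ to be the vertices $x\in U$ with $d_U(x)=0$ (or $d_U(x)$ small), and we split the vertices with $d_U>0$ into $W_2$ and $W_3$. By \eqref{pp1}, each vertex of $U$ with $d_U>0$ has about $n/6$ neighbours in $U$. By \eqref{pp2}, adjacent vertices of $U$ share only $O(\varepsilon^{1/3}n)$ neighbours in $U$. Thus $G[U\setminus W_1]$ is nearly triangle-free, with degrees about $n/6$, and each edge is nearly disjoint in common neighbourhood. For $W_2$ we take $N_U(v)$, where $v\in U$ is a vertex of the good triangle, and for $W_3$ we take $N_U(w)$. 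The goodness of $uvw$ ($e(N(v,w))$ small) together with \eqref{pp2} gives $|W_2\cap W_3|=O(\varepsilon^{1/3}n)$, and $W_1=U\setminus(W_2\cup W_3)$ then has about $n/6$ vertices.

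Symmetrically, we pick a vertex $u'\in W_1$ of typical degree. It is nonadjacent to $U$ essentially, so $N(u')$ lies mostly in $V$, and Lemma~\ref{Lemma:inside_neighborhood} applied to $u'$ produces $W_4$ (isolated part) and $W_5,W_6$ inside $N(u')$. Alternatively, working directly inside $V$: let $W_4$ be the vertices of $V$ with most neighbours in $U$ among $W_2\cup W_3$, and let $W_5,W_6$ be the common $V$-neighbourhoods from \eqref{pp2} of a $W_1$–$W_2$ style edge. Disjointness (property~\ref{ppp1.5}) is then enforced by definition, by discarding overlaps of size $O(\varepsilon^{1/3}n)$. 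Conditions~\ref{ppp1}, \ref{ppp2} and \ref{ppp4} follow from edge counting. The count is $e(G)\ge n^2/4$; $e(U)\approx|U|\cdot(n/6)/2\approx n^2/24\cdot$(fraction), and the degree bounds $d_V(x)\approx n/3$ for $x\in W_2\cup W_3$, while $x\in W_1$ has $d(x)\approx n/2$ all in $V$. Sparsity of each $W_i$ (property~\ref{ppp3}) comes from the book bound: an edge inside $W_2$ would have too many common neighbours with the $W_1$ / $W_3$ side, so the good-triangle counting bounds such edges by $O(\varepsilon^{1/3}n^2)$.

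The main obstacle is the second half: identifying the structure inside $V$ consistently with the partition of $U$, namely showing that $V$ splits as $W_4\cup W_5\cup W_6$ with $W_4$ complete to $W_1$, and $W_5,W_6$ tied to $W_2,W_3$ respectively. We would attack this by applying Lemma~\ref{Lemma:inside_neighborhood} to a typical vertex $u'\in W_1$ and comparing $N(u')$ with $V$ via degree sums, controlling errors in $O(\varepsilon^{1/3}n)$ throughout.
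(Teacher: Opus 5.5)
There is a genuine gap, and it is in the first half of your construction, not only in the second half you flag. Lemma~\ref{Lemma:inside_neighborhood} is purely local. It tells you that every $x\in U$ has $d_U(x)=0$ or $d_U(x)=n/6\pm O(\varepsilon^{1/3}n)$, and that edges of $U$ have few common neighbours in $U$. It does \emph{not} tell you that the vertices of $U$ with $d_U>0$ are, up to $O(\varepsilon^{1/3}n)$, exactly $N_U(v)\cup N_U(w)$.

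Your sentence ``$W_1=U\setminus(W_2\cup W_3)$ then has about $n/6$ vertices'' silently identifies two different sets. One is $U\setminus(N_U(v)\cup N_U(w))$, whose size is indeed about $n/6$. The other is the set of vertices with $d_U$ small, whose size and degree property (almost all of $x$'s $\approx n/2$ neighbours on the other side) are what property~\ref{ppp2} needs. Nothing in your argument excludes, say, $G[U]$ being an $n/6$-regular bipartite graph on all of $U$. That configuration is consistent with \eqref{pp1}, \eqref{pp2}, near-regularity and $e(U)\approx e(V)$, yet it has no ``isolated'' vertices at all. Edge counting with $e(G)\ge\lfloor n^2/4\rfloor$ cannot rule it out.

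What rules it out is the hypothesis $t(G)\le b^2(n-4b)\le n^3/108$, which your proposal never uses. The paper instead defines the isolated part by a degree threshold, $W_4=\{x\in A: d_B(x)\ge(1/2-C_0\varepsilon^{1/3})n\}$. It then counts triangles of two kinds. Those with two vertices in $A$ and one in $B$ number at least $e(A)(n/6-O(\varepsilon^{1/3}n))$. Those with one vertex in $A$ and two in $B$ are counted via the ``good'' edges of $B$ for each edge of $A$. Together this gives $t(G)\ge |A\setminus W_4|^2n/24-O(\varepsilon^{1/3}n^3)$, hence $|W_4|\ge(\frac{3-2\sqrt2}{6}-O(\varepsilon^{1/3}))n$. (Also, your first justification for a good triangle via $t(G)\ge n/2$ does not work; the Conlon--Fox--Sudakov cubic lower bound is genuinely needed, as you then concede.)

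The second half inherits the same gap and adds one more. To rerun Lemma~\ref{Lemma:inside_neighborhood} on the other side and get $|W_5\cap W_6|$, $e(W_5)$, $e(W_6)$ small, you need a triangle that is good in the sense of Lemma~\ref{Lemma:regular+good_triangle} through a vertex of the isolated part, not just a ``typical'' $u'$. The paper obtains one because the weak linear bound on $|W_4|$ gives $\Omega(n^3)$ triangles with one vertex in $W_4$ and two in $B$, so one of them, $x_4x_5x_6$, is good.

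Only then, with $B'=B\cap N(x_4)$ and $W_1$ again defined by a degree threshold, does a second count finish the job. That count is $n^3/108\ge t(G)\ge(|A\setminus W_4|+|B'\setminus W_1|)n^2/72-O(\varepsilon^{1/3}n^3)$. Since both $|A\setminus W_4|$ and $|B'\setminus W_1|$ are at least $n/3-O(\varepsilon^{1/3}n)$, both isolated parts are forced to have size $n/6\pm O(\varepsilon^{1/3}n)$.

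Finally, your labels are swapped relative to the statement. The isolated part on the side containing $W_2,W_3$ must be $W_4$, and the one on the side of $W_5,W_6$ must be $W_1$.
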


\begin{proof}
    By Lemma~\ref{Lemma:regular+good_triangle}, we have $|R_{0}|=O(\varepsilon^{1/3}n)$. So there are $O(\varepsilon^{1/3}n^{3})$ triangles containing at least one vertex in $R_{0}$. By the quantitative lower bound of Conlon, Fox, and Sudakov~\cite{CFS20} recalled in the introduction, $t(G)>\frac{n^{3}}{108}-O(\varepsilon^{1/3}n^{3})$. By the latter statement of Lemma~\ref{Lemma:regular+good_triangle}, there exists a triangle $x_{1}x_{2}x_{3}$ in $G\setminus R_{0}$ such that $e(N(x_{1},x_{2})), e(N(x_{2},x_{3}))$, and $e(N(x_{1},x_{3}))$ are all at most $\varepsilon^{1/3} n^{2}$. Set $A\coloneqq N(x_{1})\setminus R_{0}$, $B\coloneqq V(G)\setminus(A\cup R_{0})$, $W_{2}\coloneqq N_{A}(x_{3})$, $W_{3}\coloneqq N_{A}(x_{2})$ and \[
    W_{4}\coloneqq \left\{ x\in A\colon d_{B}(x) \ge \left(\frac{1}{2}-C_{0}\varepsilon^{1/3}\right)n \right\}, 
    \] 
    where $C_{0}$ is a fixed large constant.

\tikzset{every picture/.style={line width=0.75pt}} 
\begin{figure}
\centering

\begin{tikzpicture}[x=0.75pt,y=0.75pt,yscale=-0.7,xscale=0.7]

\draw   (200,77.43) .. controls (200,62.28) and (212.28,50) .. (227.43,50) -- (252.57,50) .. controls (267.72,50) and (280,62.28) .. (280,77.43) -- (280,272.57) .. controls (280,287.72) and (267.72,300) .. (252.57,300) -- (227.43,300) .. controls (212.28,300) and (200,287.72) .. (200,272.57) -- cycle ;
\draw   (380,77.43) .. controls (380,62.28) and (392.28,50) .. (407.43,50) -- (432.57,50) .. controls (447.72,50) and (460,62.28) .. (460,77.43) -- (460,272.57) .. controls (460,287.72) and (447.72,300) .. (432.57,300) -- (407.43,300) .. controls (392.28,300) and (380,287.72) .. (380,272.57) -- cycle ;
\draw    (240,180) -- (420,100) ;
\draw    (240,180) -- (420,250) ;
\draw    (420,100) -- (420,250) ;

\filldraw[fill=white, draw=black] (240,180) circle (2pt);

\filldraw[fill=white, draw=black] (420,100) circle (2pt);

\filldraw[fill=white, draw=black] (420,250) circle (2pt);

\draw   (390,83) .. controls (390,70.3) and (400.3,60) .. (413,60) -- (427,60) .. controls (439.7,60) and (450,70.3) .. (450,83) -- (450,117) .. controls (450,129.7) and (439.7,140) .. (427,140) -- (413,140) .. controls (400.3,140) and (390,129.7) .. (390,117) -- cycle ;
\draw   (392.5,175) .. controls (392.5,163.95) and (404.81,155) .. (420,155) .. controls (435.19,155) and (447.5,163.95) .. (447.5,175) .. controls (447.5,186.05) and (435.19,195) .. (420,195) .. controls (404.81,195) and (392.5,186.05) .. (392.5,175) -- cycle ;
\draw   (390,233) .. controls (390,220.3) and (400.3,210) .. (413,210) -- (427,210) .. controls (439.7,210) and (450,220.3) .. (450,233) -- (450,267) .. controls (450,279.7) and (439.7,290) .. (427,290) -- (413,290) .. controls (400.3,290) and (390,279.7) .. (390,267) -- cycle ;

\draw (415,312.4) node [anchor=north west][inner sep=0.75pt]    {$A$};
\draw (231,312.4) node [anchor=north west][inner sep=0.75pt]    {$B$};
\draw (231,190.4) node [anchor=north west][inner sep=0.75pt]    {$x_{1}$};
\draw (422,103.4) node [anchor=north west][inner sep=0.75pt]    {$x_{2}$};
\draw (422,253.4) node [anchor=north west][inner sep=0.75pt]    {$x_{3}$};
\draw (471,92.4) node [anchor=north west][inner sep=0.75pt]    {$W_{2}$};
\draw (474,242.4) node [anchor=north west][inner sep=0.75pt]    {$W_{3}$};
\draw (471,170.4) node [anchor=north west][inner sep=0.75pt]    {$W_{4}$};

\end{tikzpicture}

\caption{Construction of $W_{2}$, $W_{3}$, and $W_{4}$}
\label{fig:Rough-structure}
\end{figure}

    \begin{claim}\label{Claim:size_of_W234}
        We have
        \begin{enumerate}
            \item \label{pr1} $\left| |A|-\frac{n}{2} \right|=  O(\varepsilon^{1/3}n)$ and $\left| |B|-\frac{n}{2} \right|=  O(\varepsilon^{1/3}n)$;
            \item \label{pr2} $\left| |W_{i}|-\frac{n}{6} \right|=  O(\varepsilon^{1/3}n)$ for $i=2,3$;
            \item \label{pr3} $\left( \frac{3-2\sqrt{2}}{6}-O(\varepsilon^{1/3})\right)n\le |W_{4}|\le \left(\frac{1}{6} + O(\varepsilon^{1/3})\right) n$; 
            \item \label{pr4} $|W_{2}\cap W_{3}|=O(\varepsilon^{1/3}n) \ \text{ and }\ W_{4}\cap (W_{2}\cup W_{3})=\emptyset$;
            \item $\min \{e(A), e(B)\}\ge \left( \frac{1}{36}-O(\varepsilon^{1/3}) \right)n^{2}$. 
        \end{enumerate}
    \end{claim}
    \begin{proof}
        Since $ x_{1}\not\in R_{0} $ and $|R_{0}|=O(\varepsilon^{1/3}n)$ by Lemma~\ref{Lemma:regular+good_triangle}, we derive Property \ref{pr1}.

        Since $|W_{2}|=d_{A}(x_{3})$ and $|W_{3}|=d_{A}(x_{2})$, applying Lemma~\ref{Lemma:inside_neighborhood} with $u=x_{1}, v=x_{3}$ and $u=x_{1}, v=x_{2}$, respectively, we obtain Property \ref{pr2}. Moreover, applying Lemma~\ref{Lemma:inside_neighborhood} with $u=x_{1},v=x_{2}$ and $w=x_{3}$, we have $|W_{2}\cap W_{3}|=|N_{A}(x_{2},x_{3})|=O(\varepsilon^{1/3}n)$. For every $x\in W_{2}\cup W_{3}$, we have $d_{A}(x)>0$ by the definition of $W_{2}$ and $W_{3}$. Applying Lemma~\ref{Lemma:inside_neighborhood} with $u=x_{1}, v=x$ yields $|d_{B}(x)-\frac{n}{3}|=O(\varepsilon^{1/3}n)$. Thus, $x\not\in W_{4}$. So $W_{4}\cap (W_{2}\cup W_{3})=\emptyset$ and, consequently, 
        \[
        |W_{4}|\le |A|-|W_{2}\cup W_{3}|=\left( \frac{1}{6}+ O(\varepsilon^{1/3})\right) n.
        \]
        Note that if $x\in A\setminus W_{4}$, then $d_{A}(x)\ge d(x)-|R_0|-d_{B}(x)>0$, provided $C_0$ is sufficiently large. Applying Lemma~\ref{Lemma:inside_neighborhood} with $u=x_{1}, v=x$ yields $|d_{A}(x)- \frac{n}{6}|=O(\varepsilon^{1/3}n)$. Thus, 
        \begin{align}
            \notag e(A)\ge \frac{1}{2}|A\setminus W_{4}| \cdot & \left(\frac{1}{6}-O(\varepsilon^{1/3}) \right)n \\
            \label{Ineq:lower-bound-edges} &=|A\setminus W_{4}|\cdot \left(\frac{1}{12}-O(\varepsilon^{1/3}) \right)n 
             \ge \left( \frac{1}{36}-O(\varepsilon^{1/3}) \right)n^{2}. 
        \end{align}
        
        By the handshake lemma, we have $\sum_{y\in B}d(y)=2e(B)+e(B,A)+e(B,R_0)$ and $\sum_{x\in A}d(x)=2e(A)+e(B,A)+e(A,R_0)$. 
        Thus,
        \begin{align*}
            \left|e(B)-e(A) \right|
            &\le \frac{1}{2}\left( \left| \sum_{y\in B}d(y)-\sum_{x\in A}d(x) \right| +e(B,R_0)+e(A,R_0) \right) \\
            &\le \frac{1}{2} \left( \left| \sum_{y\in B}d(y)-\sum_{x\in A}d(x) \right|+O(\varepsilon^{1/3}n^{2}) \right). 
        \end{align*}
        For every $x\in A\cup B$, since $x\not\in R_{0}$, we have $\left| d(x)-\frac{n}{2} \right|=O(\varepsilon^{1/3}n)$. Combining these estimates, we obtain 
        \begin{align}\label{eA=eB}
            |e(B)-e(A)|=O(\varepsilon^{1/3}n^{2}). 
        \end{align}
        Together with inequality~\eqref{Ineq:lower-bound-edges}, we have 
        $e(B)\ge \left( \frac{1}{36}-O(\varepsilon^{1/3}) \right)n^{2}$.

        Now, we prove the lower bound of $|W_{4}|$. The key idea is counting triangles in $G$. 
        Suppose $|W_{4}|\le \frac{n}{12}$, as otherwise we are done. First, we count triangles with one vertex in $A$ and two vertices in $B$. 
        Fix an edge $xy\in E(A)$. We call an edge in $E(B)$ {\it good} if it forms a triangle with $x$ or $y$. Applying Lemma~\ref{Lemma:inside_neighborhood} with $u=x_{1}, v=x, w=y$, we have 
        \begin{align*}
            \min\{d_{B}(x), d_{B}(y)\}\ge \left( \frac{1}{3}-O(\varepsilon^{1/3})\right) n\quad \text{and}\quad \left|d_{B}(x,y)- \frac{n}{6}\right|=O(\varepsilon^{1/3}n). 
        \end{align*}
        Therefore, there are at most 
        \[
         \binom{|B|}{2}-\binom{d_{B}(x)}{2}-\binom{d_{B}(y)}{2}+\binom{d_{B}(x,y)}{2}\le \left( \frac{1}{36}+O(\varepsilon^{1/3})\right)n^{2}
        \]
        edges that are not good. Combining it with inequalities~\eqref{eA=eB} and \eqref{Ineq:lower-bound-edges}, we find at least 
        \[
        e(B)-\left( \frac{1}{36}+O(\varepsilon^{1/3})\right)n^{2}\ge e(A)-\left( \frac{1}{36}+O(\varepsilon^{1/3})\right)n^{2}\ge \frac{|A\setminus W_{4}|}{12}n-\frac{n^{2}}{36}-O(\varepsilon^{1/3}n^{2})
        \]
        good edges. Summing these up for every edge in $A$, since $d_{A}(x)\le \left(\frac{1}{6}+O(\varepsilon^{1/3})\right) n$ for every $x\in A$ by Lemma~\ref{Lemma:inside_neighborhood}, the total number of triangles with one vertex in $A$ and two vertices in $B$ is at least 
        \begin{align*}
            &\left(\frac{|A\setminus W_{4}|}{12}n-\frac{n^{2}}{36}-O(\varepsilon^{1/3})n^{2} \right)\cdot e(A)/\left(\frac{1}{6}+O(\varepsilon^{1/3})\right)n \\
            \ge\ &\left(\frac{|A\setminus W_{4}|}{12}n-\frac{n^{2}}{36}-O(\varepsilon^{1/3})n^{2} \right)\cdot |A\setminus W_{4}|\cdot \left(\frac{1}{12}-O(\varepsilon^{1/3}) \right) \cdot n / \left(\frac{1}{6}+O(\varepsilon^{1/3})\right)n\\
            \ge\ &\frac{|A\setminus W_{4}|^{2}}{24}n-\frac{|A\setminus W_{4}|}{72}n^{2}-O(\varepsilon^{1/3}n^{3}), 
        \end{align*}
        where the first inequality follows from inequality~\eqref{Ineq:lower-bound-edges} and that the first item is positive since we have assumed $|W_{4}|\le \frac{n}{12}$. 
        
        Then we count the number of triangles with one vertex in $B$ and two vertices in $A$. For every edge $xy\in E(A)$, applying Lemma~\ref{Lemma:inside_neighborhood} with $u=x_{1}, v=x$ and $w=y$, we have $ d_{B}(x,y) \ge  \left(\frac{1}{6}-O(\varepsilon^{1/3})\right) n$. Thus, the total number of such triangles is at least
        \[
        \sum_{xy\in E(A)}d_{B}(x,y)\ge e(A)\cdot \left(\frac{1}{6}-O(\varepsilon^{1/3})\right) n\ge \frac{|A\setminus W_{4}|}{72}n^{2}-O(\varepsilon^{1/3}n^{3}). 
        \]
        So the number of triangles in $G$ is at least 
        \[
        \frac{|A\setminus W_{4}|^{2}}{24}n-\frac{|A\setminus W_{4}|}{72}n^{2}+ \frac{|A\setminus W_{4}|}{72}n^{2}-O(\varepsilon^{1/3}n^{3}) = \frac{|A\setminus W_{4}|^{2}}{24}n-O(\varepsilon^{1/3}n^{3}). 
        \]
        Recall that $t(G)\le b^{2}(n-4b)\le \frac{n^{3}}{108}$. Hence $|A\setminus W_{4}|\le \left(\frac{\sqrt{2}}{3}+O(\varepsilon^{1/3})\right)n$ and, consequently, $|W_{4}|\ge |A|-|A\setminus W_{4}|\ge \left( \frac{3-2\sqrt{2}}{6}-O(\varepsilon^{1/3})\right)n$. This completes the proof of Claim~\ref{Claim:size_of_W234}.
    \end{proof}
    
    For each vertex $x\in W_{4}$, by the definition of $W_{4}$, there are at most $|B|\cdot (|B|-d_{B}(x))= O(\varepsilon^{1/3}n^{2})$ edges in $B$ that do not form a triangle with $x$. So there are at least $|W_{4}|\cdot \left(e(B)-O(\varepsilon^{1/3}n^{2} ) \right)=\Omega(n^{3})$ triangles with one vertex in $W_{4}$ and two vertices in $B$. By Lemma~\ref{Lemma:regular+good_triangle}, we can choose a triangle $x_{4}x_{5}x_{6}$ with $x_{4}\in W_{4}, x_{5},x_{6}\in B$ such that $e(N(x_{4},x_{5})), e(N(x_{4},x_{6}))$, and $e(N(x_{5},x_{6}))$ are all at most $\varepsilon^{1/3} n^{2}$. 
    
    Set $B' \coloneqq B \cap N(x_{4})$, $W_{5} \coloneqq B' \cap N(x_{6})$, $W_{6} \coloneqq B' \cap N(x_{5})$, and 
    \[
    W_{1}\coloneqq \{y\in B'\colon d_{A}(y)\ge \left(\frac{1}{2}-C_{0}\varepsilon^{1/3}\right)n\}.
    \]

\tikzset{every picture/.style={line width=0.75pt}} 

\begin{figure}
\centering
    
\begin{tikzpicture}[x=0.75pt,y=0.75pt,yscale=-0.7,xscale=0.7]

\draw   (200,77.43) .. controls (200,62.28) and (212.28,50) .. (227.43,50) -- (252.57,50) .. controls (267.72,50) and (280,62.28) .. (280,77.43) -- (280,272.57) .. controls (280,287.72) and (267.72,300) .. (252.57,300) -- (227.43,300) .. controls (212.28,300) and (200,287.72) .. (200,272.57) -- cycle ;
\draw   (380,77.43) .. controls (380,62.28) and (392.28,50) .. (407.43,50) -- (432.57,50) .. controls (447.72,50) and (460,62.28) .. (460,77.43) -- (460,272.57) .. controls (460,287.72) and (447.72,300) .. (432.57,300) -- (407.43,300) .. controls (392.28,300) and (380,287.72) .. (380,272.57) -- cycle ;
\draw   (390,83) .. controls (390,70.3) and (400.3,60) .. (413,60) -- (427,60) .. controls (439.7,60) and (450,70.3) .. (450,83) -- (450,117) .. controls (450,129.7) and (439.7,140) .. (427,140) -- (413,140) .. controls (400.3,140) and (390,129.7) .. (390,117) -- cycle ;
\draw   (392.5,175) .. controls (392.5,163.95) and (404.81,155) .. (420,155) .. controls (435.19,155) and (447.5,163.95) .. (447.5,175) .. controls (447.5,186.05) and (435.19,195) .. (420,195) .. controls (404.81,195) and (392.5,186.05) .. (392.5,175) -- cycle ;
\draw   (390,233) .. controls (390,220.3) and (400.3,210) .. (413,210) -- (427,210) .. controls (439.7,210) and (450,220.3) .. (450,233) -- (450,267) .. controls (450,279.7) and (439.7,290) .. (427,290) -- (413,290) .. controls (400.3,290) and (390,279.7) .. (390,267) -- cycle ;
\draw   (210,80.57) .. controls (210,69.21) and (219.21,60) .. (230.57,60) -- (249.43,60) .. controls (260.79,60) and (270,69.21) .. (270,80.57) -- (270,269.43) .. controls (270,280.79) and (260.79,290) .. (249.43,290) -- (230.57,290) .. controls (219.21,290) and (210,280.79) .. (210,269.43) -- cycle ;
\draw    (240,100) -- (420,175) -- (240,250) -- cycle ;

\filldraw[fill=white, draw=black] (240,100) circle (2pt);

\filldraw[fill=white, draw=black] (420,175) circle (2pt);

\filldraw[fill=white, draw=black] (240,250) circle (2pt);
\draw   (220,85.33) .. controls (220,76.86) and (226.86,70) .. (235.33,70) -- (244.67,70) .. controls (253.14,70) and (260,76.86) .. (260,85.33) -- (260,134.67) .. controls (260,143.14) and (253.14,150) .. (244.67,150) -- (235.33,150) .. controls (226.86,150) and (220,143.14) .. (220,134.67) -- cycle ;
\draw   (220,215.33) .. controls (220,206.86) and (226.86,200) .. (235.33,200) -- (244.67,200) .. controls (253.14,200) and (260,206.86) .. (260,215.33) -- (260,264.67) .. controls (260,273.14) and (253.14,280) .. (244.67,280) -- (235.33,280) .. controls (226.86,280) and (220,273.14) .. (220,264.67) -- cycle ;
\draw   (216.25,175) .. controls (216.25,165.34) and (226.88,157.5) .. (240,157.5) .. controls (253.12,157.5) and (263.75,165.34) .. (263.75,175) .. controls (263.75,184.66) and (253.12,192.5) .. (240,192.5) .. controls (226.88,192.5) and (216.25,184.66) .. (216.25,175) -- cycle ;

\draw    (190,310) -- (218.59,281.41) ;
\draw [shift={(220,280)}, rotate = 135] [color={rgb, 255:red, 0; green, 0; blue, 0 }  ][line width=0.75]    (10.93,-3.29) .. controls (6.95,-1.4) and (3.31,-0.3) .. (0,0) .. controls (3.31,0.3) and (6.95,1.4) .. (10.93,3.29)   ;

\draw (415,312.4) node [anchor=north west][inner sep=0.75pt]    {$A$};
\draw (231,312.4) node [anchor=north west][inner sep=0.75pt]    {$B$};
\draw (471,92.4) node [anchor=north west][inner sep=0.75pt]    {$W_{2}$};
\draw (474,242.4) node [anchor=north west][inner sep=0.75pt]    {$W_{3}$};
\draw (471,170.4) node [anchor=north west][inner sep=0.75pt]    {$W_{4}$};
\draw (421,170.4) node [anchor=north west][inner sep=0.75pt]    {$x_{4}$};
\draw (221,82.4) node [anchor=north west][inner sep=0.75pt]    {$x_{5}$};
\draw (221,252.4) node [anchor=north west][inner sep=0.75pt]    {$x_{6}$};
\draw (164,92.4) node [anchor=north west][inner sep=0.75pt]    {$W_{5}$};
\draw (164,240.4) node [anchor=north west][inner sep=0.75pt]    {$W_{6}$};
\draw (164,170.4) node [anchor=north west][inner sep=0.75pt]    {$W_{1}$};
\draw (171,312.4) node [anchor=north west][inner sep=0.75pt]    {$B'$};

\end{tikzpicture}

\caption{Construction of $W_{1}$, $W_{5}$, and $W_{6}$}
\label{Fig-Construction_of_W156}
\end{figure}
    
    \begin{claim}\label{Claim:size_of_W156}
        We have 
        \begin{enumerate}
            \item $\left| |B'|-\frac{n}{2} \right|=  O(\varepsilon^{1/3}n)$;
            \item $\left| |W_{i}|-\frac{n}{6} \right|=  O(\varepsilon^{1/3}n)$ for $i=5,6$;
            \item $ |W_{1}|\le \left(\frac{1}{6} + O(\varepsilon^{1/3})\right) n $;
            \item $ |W_{5}\cap W_{6}|=O(\varepsilon^{1/3}n) \ \text{ and }\ W_{1}\cap (W_{5}\cup W_{6})=\emptyset $.
        \end{enumerate}
    \end{claim}
    \begin{proof}
        Since $x_{4}\in W_{4}$, we have $|B'|=d_{B}(x_{4})\ge \left(\frac{1}{2}-O(\varepsilon^{1/3})\right)n$. The upper bound follows directly from $|B'|\le |B|\le \left(\frac{1}{2} + O(\varepsilon^{1/3})\right) n$. 
        Note that $d(x_{4})-d_{B'}(x_{4})=d(x_{4})-|B'|=O(\varepsilon^{1/3}n)$. So, we have $$\left| |W_{5}|-d_{N(x_{4})\setminus R_{0}}(x_{6}) \right| = \left|d_{B'}(x_{6})- d_{N(x_{4})\setminus R_{0}}(x_{6}) \right|=O(\varepsilon^{1/3}n),$$ and $$\left| |W_{6}|-d_{N(x_{4})\setminus R_{0}}(x_{5}) \right| = \left|d_{B'}(x_{5})- d_{N(x_{4})\setminus R_{0}}(x_{5}) \right|=O(\varepsilon^{1/3}n).$$ Applying Lemma~\ref{Lemma:inside_neighborhood} with $u=x_{4}, v=x_{6}$ and $u=x_{4}, v=x_{5}$, respectively, we have $|d_{N(x_{4})\setminus R_{0}}(x_{6})-\frac{n}{6}|=O(\varepsilon^{1/3}n)$ and $|d_{N(x_{4})\setminus R_{0}}(x_{5})-\frac{n}{6}|=O(\varepsilon^{1/3}n)$. Thus, $\left| |W_{5}|- \frac{n}{6}\right|=O(\varepsilon^{1/3} n)$ and $\left| |W_{6}|- \frac{n}{6}\right|=O(\varepsilon^{1/3} n)$. Moreover, applying Lemma~\ref{Lemma:inside_neighborhood} with $u=x_{4}, v=x_{5}$ and $w=x_{6}$, we have $|W_{5}\cap W_{6}|\le d_{N(x_{4})\setminus R_{0}}(x_{5},x_{6})=O(\varepsilon^{1/3}n)$. 

        For every $x\in W_{5}\cup W_{6}$, applying Lemma~\ref{Lemma:inside_neighborhood} with $u=x_{4}$ and $v=x$, we have $|d_{A}(x)-\frac{n}{3}|=O(\varepsilon^{1/3} n)$. Thus $x\not\in W_{1}$. So $W_{1}\cap (W_{5}\cup W_{6})=\emptyset$ and, consequently, 
        \[
        |W_{1}|\le |B'|-|W_{5}\cup W_{6}|=\left( \frac{1}{6} +O(\varepsilon^{1/3}) \right)n,
        \]
        This completes the proof of Claim~\ref{Claim:size_of_W156}. 
    \end{proof}

    \begin{claim}\label{Claim:lower_bound_of_W14}
        We have $|W_{1}|\ge \left(\frac{1}{6} - O(\varepsilon^{1/3})\right) n$ and $|W_{4}|\ge \left(\frac{1}{6} - O(\varepsilon^{1/3})\right) n$. 
    \end{claim}
    \begin{proof}
        We count triangles in $G$ in a slightly different way from the proof of Claim~\ref{Claim:size_of_W234}. 

        First, we count the number of triangles with one vertex in $B'$ and two vertices in $A$. Recall that in the proof of Claim~\ref{Claim:size_of_W234} we have shown that the number of triangles with one vertex in $B$ and two vertices in $A$ is at least
        $
        |A\setminus W_{4}|\cdot n^{2}/72-O(\varepsilon^{1/3}n^{3}). 
        $
        Since $|B\setminus B'|=O(\varepsilon^{1/3}n)$, the number of triangles with one vertex in $B'$ and two vertices in $A$ is at least 
        \[
        \frac{|A\setminus W_{4}|}{72}n^{2}-O(\varepsilon^{1/3}n^{3}). 
        \]
        Then we count the number of triangles with one vertex in $A$ and two vertices in $B'$. Note that if $x\in B'\setminus W_{1}$, then $d_{B'}(x)\ge d(x)-d_{A}(x)-O(\varepsilon^{1/3}n)>0$, provided $C_{0}$ is sufficiently large. Applying Lemma~\ref{Lemma:inside_neighborhood} with $u=x_{4}$ and $v=x$ leads to $d_{B'}(x)\ge d_{N(x_{4})\setminus R_{0}}(x)-O(\varepsilon^{1/3}n)\ge \left( \frac{1}{6}-O(\varepsilon^{1/3}) \right)n$. Thus,
        \[
        e(B')\ge \frac{1}{2}|B'\setminus W_{1}|\cdot \left(\frac{1}{6}-O(\varepsilon^{1/3}) \right)n=|B'\setminus W_{1}|\cdot \left(\frac{1}{12}-O(\varepsilon^{1/3}) \right)n. 
        \]
        For each edge $xy\in E(B')$, applying Lemma~\ref{Lemma:inside_neighborhood} with $u=x_{4}, v=x$ and $w=y$, we have $d_{A}(x,y)\ge d_{V(G)\setminus (N(x_{4})\cup R_{0})}(x,y)-O(\varepsilon^{1/3}n)\ge \left( \frac{1}{6}- O(\varepsilon^{1/3})\right)n$. Therefore, the number of triangles with one vertex in $A$ and two vertices in $B'$ is at least 
        \[
        \sum_{xy\in E(B')} d_{A}(x,y)\ge e(B')\cdot \left(\frac{1}{6}-O(\varepsilon^{1/3})\right) n\ge \frac{|B'\setminus W_{1}|}{72}n^{2}-O(\varepsilon^{1/3}n^{3}). 
        \]
        Therefore, we have 
        \[
        \frac{n^{3}}{108}\ge t(G)\ge \frac{|A\setminus W_{4}|}{72}n^{2}+\frac{|B'\setminus W_{1}|}{72}n^{2}-O(\varepsilon^{1/3}n^{3})\ge \frac{n^{3}}{108}-O(\varepsilon^{1/3}n^{3}), 
        \]
        where the last inequality follows by Claim~\ref{Claim:size_of_W234} and Claim~\ref{Claim:size_of_W156}. So the inequality is almost tight and we have 
        \[
        \max\{|A\setminus W_{4}|, |B'\setminus W_{1}|\}\le \left(\frac{1}{3}+O(\varepsilon^{1/3})\right)n, 
        \]
        and the desired result follows. This completes the proof of Claim~\ref{Claim:lower_bound_of_W14}. 
    \end{proof}
    Now we verify that $W_{i}$'s satisfy the desired properties of Lemma \ref{Lemma:roughly-good-structure}. Properties \ref{ppp1} and \ref{ppp1.5} hold by Claim~\ref{Claim:size_of_W234}, Claim~\ref{Claim:size_of_W156}, and Claim~\ref{Claim:lower_bound_of_W14}. 
    
    For every $x\in \cup_{i\in[6]}W_{i}$, since $x\not\in R_{0}$, we have $|d(x)-\frac{n}{2}|=O(\varepsilon^{1/3}n)$. For every $x\in W_{4}$, by the definition of $W_{4}$, we have $d_{W_{1}\cup W_{5}\cup W_{6}}(x)\ge \left( \frac{1}{2}-O(\varepsilon^{1/3})\right)n$. For every vertex $x\in W_{2}\cup W_{3}$, applying Lemma~\ref{Lemma:inside_neighborhood} with $u=x_{1}$ and $v=x$, we have $d_{W_{1}\cup W_{5}\cup W_{6}}(x)\ge \left( \frac{1}{3}-O(\varepsilon^{1/3})\right)n$. For every $x\in W_{1}$, by the definition of $W_{1}$, we have $d_{W_{2}\cup W_{3}\cup W_{4}}(x)\ge \left( \frac{1}{2}-O(\varepsilon^{1/3})\right)n$. For every vertex $x\in W_{5}\cup W_{6}$, applying Lemma~\ref{Lemma:inside_neighborhood} with $u=x_{4}$ and $v=x$, we have $d_{W_{2}\cup W_{3}\cup W_{4}}(x)\ge \left( \frac{1}{3}-O(\varepsilon^{1/3})\right)n$. So Property \ref{ppp2} holds. 

    By the choices of the triangles $x_{1}x_{2}x_{3}$ and $x_{4}x_{5}x_{6}$, we have $ e(W_{i})=O(\varepsilon^{1/3}n^{2})$ for $i\in \{2,3,5,6\}$. Moreover, we have
    \[
    e(W_{1})=\frac{1}{2}\sum_{x\in W_{1}}d_{W_{1}}(x)\le \frac{1}{2}\sum_{x\in W_{1}}(d(x)-d_{W_{2}\cup W_{3}\cup W_{4}}(x))=O(\varepsilon^{1/3}n^{2}). 
    \]
    Similarly, $e(W_{4})=O(\varepsilon^{1/3}n^{2})$. Since $|A\setminus (W_{2}\cup W_{3}\cup W_{4})|=O(\varepsilon^{1/3}n)$, by Claim~\ref{Claim:size_of_W234}, we have $e(W_{2}\cup W_{3}\cup W_{4})\ge \left( \frac{1}{36}-O(\varepsilon^{1/3})\right)n^{2}$. Similarly, by $|B\setminus (W_{1}\cup W_{5} \cup W_{6})|=O(\varepsilon^{1/3}n)$ and Claim~\ref{Claim:size_of_W234}, we have $e(W_{1}\cup W_{5}\cup W_{6})\ge \left( \frac{1}{36}-O(\varepsilon^{1/3})\right)n^{2}$. This completes the proof of Lemma~\ref{Lemma:roughly-good-structure}. 
\end{proof}

Now we are prepared to prove Lemma~\ref{Lemma:good-structure}. 
    
\begin{proof}[Proof of Lemma~\ref{Lemma:good-structure}]
        Let $W_i$ be the subsets that meet the properties in Lemma~\ref{Lemma:roughly-good-structure}, and set $A := W_2 \cup W_3 \cup W_4$ and $B := W_1 \cup W_5 \cup W_6$. 
        Then, Property \ref{ppp1.5} gives $A\cap B=\emptyset$.
        Let 
        \begin{align*}
            B_{1}\coloneqq &\{x\in W_{2}\cup W_{3}\colon d_{W_{5}}(x)\ge \varepsilon^{1/12}n \ \text{and} \ d_{W_{6}}(x)\ge \varepsilon^{1/12}n\} \\
            &\cup \{y\in W_{5}\cup W_{6}\colon d_{W_{2}}(y)\ge \varepsilon^{1/12}n \ \text{and} \ d_{W_{3}}(y)\ge \varepsilon^{1/12}n\}, \\
            B_{2}\coloneqq &\left\{x\in W_{2}\colon d_{W_{3}}(x)\le \left(\frac{1}{6}-\varepsilon^{1/12}\right)n \right\}\cup \left\{x\in W_{3}\colon d_{W_{2}}(x)\le \left(\frac{1}{6}-\varepsilon^{1/12}\right)n \right\} \\
            &\cup \left\{x\in W_{5}\colon d_{W_{6}}(x)\le \left(\frac{1}{6}-\varepsilon^{1/12}\right)n \right\}\cup \left\{x\in W_{6}\colon d_{W_{5}}(x)\le \left(\frac{1}{6}-\varepsilon^{1/12}\right)n \right\}
        \end{align*}
        be the sets of ``bad" vertices. 
    \begin{claim}\label{Claim:few-bad-vertices}
        We have $|B_{1}|\le \varepsilon^{1/12}n$ and  $|B_{2}|\le \varepsilon^{1/12}n$. 
    \end{claim}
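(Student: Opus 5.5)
The plan is to bound $|B_1|$ by a triangle-counting argument that uses the near-tightness of the bound in Claim~\ref{Claim:lower_bound_of_W14}, and to bound $|B_2|$ by a book-number argument combined with the sparsity of the parts $W_i$ (property~\ref{ppp3}). By symmetry it suffices to treat vertices in $W_2$ (or $W_5$); the other cases are identical.

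\textbf{Bounding $|B_1|$.} First I record that the graph between $W_5$ and $W_6$ is almost complete. By properties \ref{ppp1}, \ref{ppp3} and \ref{ppp4}, $e(W_1\cup W_5\cup W_6)\ge (1/36-O(\varepsilon^{1/3}))n^2$ while $e(W_i)=O(\varepsilon^{1/3}n^2)$. Also, each $y\in W_1$ has only $O(\varepsilon^{1/3}n)$ neighbours inside $B$, by property~\ref{ppp2} together with $d(y)\le n/2+O(\varepsilon^{1/3}n)$. Hence $e(W_5,W_6)\ge (1/36-O(\varepsilon^{1/3}))n^2$. Since $|W_5|,|W_6|=n/6+O(\varepsilon^{1/3}n)$, the number of non-adjacent pairs in $W_5\times W_6$ is $O(\varepsilon^{1/3}n^2)$.

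Now take $x\in B_1\cap(W_2\cup W_3)$. The pairs in $N_{W_5}(x)\times N_{W_6}(x)$ number at least $\varepsilon^{1/6}n^2$, and at most $O(\varepsilon^{1/3}n^2)$ of them are non-edges. So $x$ lies in at least $\varepsilon^{1/6}n^2/2$ triangles whose other two vertices lie in $W_5\cup W_6$.

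I then redo the count from the proof of Claim~\ref{Claim:lower_bound_of_W14}, splitting the triangles with two vertices in $B'$ according to where the $A$-vertex lies. The triangles whose $A$-vertex is in $W_4$ already give the full $|B'\setminus W_1|n^2/72-O(\varepsilon^{1/3}n^3)$. Indeed, every $w\in W_4$ misses only $O(\varepsilon^{1/3}n)$ vertices of $B$, and $|W_4|=n/6-O(\varepsilon^{1/3}n)$, so every edge of $B'$ outside the few vertices of low $W_4$-codegree gets $n/6-O(\varepsilon^{1/3}n)$ common neighbours in $W_4$. The triangles through $B_1\cap(W_2\cup W_3)$ are disjoint from these and from the $A$-edge triangles. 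This gives
\[
\frac{n^3}{108}\ge t(G)\ge \frac{n^3}{108}-O(\varepsilon^{1/3}n^3)+|B_1\cap(W_2\cup W_3)|\cdot\frac{\varepsilon^{1/6}n^2}{2},
\]
so $|B_1\cap (W_2\cup W_3)|=O(\varepsilon^{1/6}n)$. The same argument with the roles of $A$ and $B$ exchanged bounds $|B_1\cap (W_5\cup W_6)|$, using the $W_2$--$W_3$ near-completeness and the $W_1$-vertices as the "full" apexes. In total $|B_1|=O(\varepsilon^{1/6}n)\le\varepsilon^{1/12}n$ for small $\varepsilon$.

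\textbf{Bounding $|B_2|$.} Take $x\in W_2$ with $d_{W_3}(x)\le (1/6-\varepsilon^{1/12})n$.

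1. Apply Lemma~\ref{Lemma:inside_neighborhood} with $u=x_1$, $v=x$; this is valid since $x\in N_A(x_3)$ has $d_A(x)>0$ in the original $A$. Because the original $A$ differs from $W_2\cup W_3\cup W_4$ by only $O(\varepsilon^{1/3}n)$ vertices, this gives $d_{W_2\cup W_3\cup W_4}(x)\ge (1/6-O(\varepsilon^{1/3}))n$ and $d_{B}(x)\ge (1/3-O(\varepsilon^{1/3}))n$.

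2. Show that $x$ has no neighbour in $W_4$. Any $w\in W_4$ satisfies $d_B(w)\ge (1/2-O(\varepsilon^{1/3}))n$. If $xw$ were an edge, then
\[
d(x,w)\ge d_B(x)+d_B(w)-|B|\ge (1/3-O(\varepsilon^{1/3}))n>b,
\]
contradicting $b(G)\le b\le(1/6+\varepsilon)n$.

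3. Combining steps 1 and 2 gives $d_{W_2}(x)\ge \varepsilon^{1/12}n-O(\varepsilon^{1/3}n)\ge \varepsilon^{1/12}n/2$.

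4. Since $e(W_2)=O(\varepsilon^{1/3}n^2)$, there are at most $O(\varepsilon^{1/3-1/12}n)=O(\varepsilon^{1/4}n)$ such $x$.

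The cases $W_3$, $W_5$, $W_6$ are symmetric; for $W_5,W_6$ use $u=x_4$ and the vertices of $W_1$ in place of $W_4$. Hence $|B_2|=O(\varepsilon^{1/4}n)\le\varepsilon^{1/12}n$.

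The main obstacle is the $B_1$ bound. It requires checking carefully that the refined triangle count really reaches $n^3/108-O(\varepsilon^{1/3}n^3)$ using only the $W_4$ apexes (respectively the $W_1$ apexes), so that the triangles through bad vertices are genuinely extra and not already absorbed in the earlier lower bound.
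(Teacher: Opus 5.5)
\textbf{Bounding $|B_1|$.} This follows the paper's strategy: near-completeness of $W_5$--$W_6$, a base count of $n^3/108-O(\varepsilon^{1/3}n^3)$ triangles, and $\varepsilon^{1/6}n^2/2$ extra triangles through each vertex of $B_1\cap(W_2\cup W_3)$, set against $t(G)\le n^3/108$.

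The paper's base count is symmetric. It counts triangles with apex in $W_4$ and two vertices in $W_1\cup W_5\cup W_6$, plus triangles with apex in $W_1$ and two vertices in $W_2\cup W_3\cup W_4$. Both families avoid the extra triangles of \emph{both} halves of $B_1$, so one pass suffices, and only the listed properties of Lemma~\ref{Lemma:roughly-good-structure} are used. Your reuse of the $A$-edge count from Claim~\ref{Claim:lower_bound_of_W14} is legitimate. It does, however, force a mirrored second pass and a return to the construction ($A$, $B'$, $x_1$).

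\textbf{The step you flagged needs repair.} The per-edge claim does not follow as written: that every edge of $B'$ away from a few exceptional vertices has $n/6-O(\varepsilon^{1/3}n)$ common neighbours in $W_4$. The $O(\varepsilon^{1/3}n^2)$ non-edges between $W_4$ and $B$ only bound the number of vertices missing more than $K\varepsilon^{1/3}n$ vertices of $W_4$ by $O(n/K)$, which is not few. Thresholding at the optimal level $\varepsilon^{1/6}n$ still loses order $\varepsilon^{1/6}n^3$ triangles. That loss would completely swallow the gain of $\varepsilon^{1/6}n^2$ per bad vertex.

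Count from the apex side instead:
\[
\sum_{yz\in E(B')} d_{W_4}(y,z)=\sum_{w\in W_4} e\bigl(N_{B'}(w)\bigr)\ge |W_4|\bigl(e(B')-O(\varepsilon^{1/3}n^2)\bigr).
\]
This holds because each $w\in W_4$ misses only $O(\varepsilon^{1/3}n)$ vertices of $B'$, each incident to at most $n$ edges. It is exactly the paper's estimate $e(N_B(x))\ge e(B)-O(\varepsilon^{1/3}n^2)$ for $x\in W_4$, and it gives the required error $O(\varepsilon^{1/3}n^3)$.

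\textbf{Bounding $|B_2|$.} Your argument is correct and genuinely different from the paper's. The paper simply counts non-edges. Since $|W_3|\ge(1/6-O(\varepsilon^{1/3}))n$, each $x\in B_2\cap W_2$ has at least $\varepsilon^{1/12}n-O(\varepsilon^{1/3}n)$ non-neighbours in $W_3$. The resulting $\Omega(\varepsilon^{1/6}n^2)$ missing $W_2$--$W_3$ edges contradict the near-completeness already obtained in the $B_1$ step.

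You instead use $d_A(x)=n/6\pm O(\varepsilon^{1/3}n)$ from Lemma~\ref{Lemma:inside_neighborhood} and the book-number argument $N(x)\cap W_4=\emptyset$. Together these push the deficit into $W_2$ itself, and you then invoke property~\ref{ppp3}. Your route costs an extra codegree argument and a return to the construction. The paper's route is one line once near-completeness is known, and you had already proved it. Both give $|B_2|=O(\varepsilon^{1/4}n)$.
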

    \begin{proof}
        First, we need to do some preparation. For every vertex $x \in W_4$, by Property \ref{ppp2}, we have $d_{A}(x) \le d(x) - d_{B}(x) = O(\varepsilon^{1/3}n)$. Hence, $e(W_4, B) = O(\varepsilon^{1/3}n^2)$. Consequently, by Properties \ref{ppp3} and \ref{ppp4}, 
        \begin{align*}
            \left(\frac{1}{36} - O(\varepsilon^{1/3})\right) n^2 \le e(A) 
             &\le e(W_2, W_3) + e(W_2) + e(W_3) + e(W_4, A) \\
             &= e(W_2, W_3) + O(\varepsilon^{1/3}n^2).
        \end{align*}
        Thus $e(W_2, W_3) \ge \left(\frac{1}{36} - O(\varepsilon^{1/3})\right) n^2$. Since $|W_2|, |W_3| \le \left(\frac{1}{6} + O(\varepsilon^{1/3})\right)n$, we conclude that there are at most $O(\varepsilon^{1/3}n^2)$ missing edges between $W_2$ and $W_3$. Similarly, there are at most $O(\varepsilon^{1/3}n^2)$ missing edges between $W_5$ and $W_6$.
        
        Suppose, for a contradiction, that $|B_{1}|> \varepsilon^{1/12}n$. For every vertex $x\in B_{1}$, we may assume $x\in W_{2}\cup W_{3}$. Then, by the discussion above, there are at least $d_{W_{5}}(x)\cdot d_{W_{6}}(x)-O(\varepsilon^{1/3}n^{2})=\Omega(\varepsilon^{1/6})n^{2}$ edges in $E(W_{5},W_{6})$ that form a triangle with $x$. Hence there are at least $\Omega(\varepsilon^{1/4}n^{3})$ such triangles. For every vertex $x\in W_{4}$, by Property \ref{ppp2}, we have
        \[
        e(N_{B}(x))\ge e(B)-O(\varepsilon^{1/3}n^{2})\ge \left(\frac{1}{36}-O(\varepsilon^{1/3})\right)n^{2},
        \]
        which implies that there are at least $\left(\frac{1}{216}-O(\varepsilon^{1/3})\right)n^{3}$ triangles with one vertex in $W_{4}$ and two vertices in $B$. Similarly, there are at least $\left(\frac{1}{216}-O(\varepsilon^{1/3})\right)n^{3}$ triangles with one vertex in $W_{1}$ and two vertices in $A$. Since $|W_{i}\cap W_{j}|=O(\varepsilon^{1/3}n)$ for $i\neq j\in [6]$, at most $O(\varepsilon^{1/3}n^{3})$ triangles are counted more than once and at most $9$ times. Putting these quantities together and removing duplicate counts, we obtain
        \[
        \frac{n^{3}}{108}\ge t(G)\ge \frac{n^{3}}{108}-O(\varepsilon^{1/3}n^{3})+\Omega(\varepsilon^{1/4}n^{3})=\left(\frac{1}{108}+\Omega(\varepsilon^{1/4})\right)n^{3},
        \]
        which is a contradiction. Therefore, $|B_{1}|\le \varepsilon^{1/12}n$. 

        Suppose that $|B_{2}|\ge \varepsilon^{1/12}n$. 
        Note that every vertex in $B_{2}$ yields at least $\Omega(\varepsilon^{1/12}n)$ missing edges between $W_{2}$ and $W_{3}$ or between $W_{5}$ and $W_{6}$. Hence there are at least $\Omega(\varepsilon^{1/6}n^{2})$ missing edges in total between $W_{2}$ and $W_{3}$ or between $W_{5}$ and $W_{6}$. 
        Recall that there are at most $O(\varepsilon^{1/3}n^{2})$ missing edges between $W_{2}$ and $W_{3}$, and similarly between $W_{5}$ and $W_{6}$. 
        This yields a contradiction. Consequently, $|B_{2}|\le \varepsilon^{1/12}n$.
    \end{proof}
    
    Now set $W_{i}'\coloneqq W_{i}\setminus (B_{1}\cup B_{2})$ for $i\in [6]$ and $R\coloneqq V(G)\setminus (\cup_{i\in [6]}W_{i}')$. Then, $|R|=O(\varepsilon^{1/12}n)$ and $\left| |W_{i}'|-\frac{n}{6}\right| =O(\varepsilon^{1/12}n)$ for $i\in [6]$. We shall show that $R$ and $W_{i}', i\in [6]$ satisfy the properties stated in Lemma~\ref{Lemma:good-structure}. 

    \begin{claim}\label{Claim:degree-condition}
        $G[W_{2}'\cup W_{3}']$ is a bipartite graph with parts $W_{2}'$ and $W_{3}'$. $G[W_{5}'\cup W_{6}']$ is a bipartite graph with parts $W_{5}'$ and $W_{6}'$. And, without loss of generality, we can assume that $d_{W_{j}'}(x)\le \varepsilon^{1/12}n$ for every $x\in W_{i}'$ and $(i,j)\in \{(2,6),(6,2), (3, 5), (5,3)\}$. 
    \end{claim}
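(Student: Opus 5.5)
The plan is to use the fact that vertices outside $B_1\cup B_2$ have a very rigid neighbourhood pattern, and to get a contradiction with $b(G)\le b\le(\frac16+\varepsilon)n$ whenever an edge would create a book of size about $n/3$.

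\textbf{Step 1: neighbourhood pattern.} Take $x\in W_2'\cup W_3'$. By Property~\ref{ppp2} of Lemma~\ref{Lemma:roughly-good-structure}, $d_{W_1\cup W_5\cup W_6}(x)\ge(\frac13-O(\varepsilon^{1/3}))n$. Since $x\notin B_1$, one of $d_{W_5}(x),d_{W_6}(x)$ is less than $\varepsilon^{1/12}n$. All of $|W_1|,|W_5|,|W_6|$ are $(\frac16+O(\varepsilon^{1/3}))n$. Hence $x$ is adjacent to all but $O(\varepsilon^{1/12}n)$ vertices of $W_1$, and to all but $O(\varepsilon^{1/12}n)$ vertices of exactly one of $W_5,W_6$. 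Call this the \emph{type} of $x$ (type $5$ or type $6$). Symmetrically, each $y\in W_5'\cup W_6'$ is almost complete to $W_4$ and to exactly one of $W_2,W_3$, and has fewer than $\varepsilon^{1/12}n$ neighbours in the other. Here we use $W_1\cap(W_5\cup W_6)=\emptyset$ and $W_4\cap(W_2\cup W_3)=\emptyset$.

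\textbf{Step 2: bipartiteness.} Suppose $x,y\in W_2'$ are adjacent. Both avoid $B_2$, so each has at least $(\frac16-\varepsilon^{1/12})n$ neighbours in $W_3$, and $|W_3|\le(\frac16+O(\varepsilon^{1/3}))n$. Thus $d_{W_3}(x,y)\ge(\frac16-O(\varepsilon^{1/12}))n$. By Step~1 we also have $d_{W_1}(x,y)\ge(\frac16-O(\varepsilon^{1/12}))n$. Since $W_1\cap W_3=\emptyset$, this gives a book of size at least $(\frac13-O(\varepsilon^{1/12}))n>b$, a contradiction. The same argument handles $W_3'$. For $W_5'$ and $W_6'$ we use $W_4$ in place of $W_1$. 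This proves that $G[W_2'\cup W_3']$ and $G[W_5'\cup W_6']$ are bipartite with the stated parts.

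\textbf{Step 3: consistency of types.} Let $xy$ be an edge with $x\in W_2'$ and $y\in W_3'$. If $x$ and $y$ had the same type, say $5$, then $d_{W_1}(x,y)$ and $d_{W_5}(x,y)$ would each be at least $(\frac16-O(\varepsilon^{1/12}))n$, again giving a book larger than $b$. So adjacent vertices have opposite types. Any two vertices of $W_2'$ have a common neighbour in $W_3'$, because each has at least $(\frac16-\varepsilon^{1/12})n$ neighbours in a set of size about $n/6$. Hence all of $W_2'$ has one type and all of $W_3'$ has the other. After swapping the labels $2\leftrightarrow3$ if necessary (the relabeling $(2\,3)(5\,6)$ is an automorphism of $P$), we may assume $W_2'$ has type $5$ and $W_3'$ has type $6$. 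This gives $d_{W_6}(x)\le\varepsilon^{1/12}n$ for $x\in W_2'$ and $d_{W_5}(x)\le\varepsilon^{1/12}n$ for $x\in W_3'$. The same argument shows that $W_5'$ and $W_6'$ each have a single type, and that these types are opposite.

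\textbf{Step 4: matching the two sides.} It remains to show $W_6'$ is the part of type ``small to $W_2$''. If instead $W_6'$ were almost complete to $W_2$, we would double count $e(W_2',W_6')$. From the $W_2'$ side it is at most $\varepsilon^{1/12}n|W_2'|$. From the $W_6'$ side it is at least $|W_6'|(\frac16-O(\varepsilon^{1/12}))n$. For small $\varepsilon$ these bounds are incompatible. So $W_6'$ is small to $W_2$, and hence $W_5'$ is small to $W_3$. This yields all four pairs $(i,j)$ in the claim.

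The main obstacle is Step~1: the error terms must be controlled so that the lower bounds on common neighbourhoods in Steps~2 and~3 really exceed $b-(\frac16-O(\varepsilon^{1/12}))n$. This only requires $O(\varepsilon^{1/12})$ losses, which suffices once $\varepsilon$ is small.
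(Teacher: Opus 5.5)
Your proof is correct and follows essentially the paper's route: you assign types from $x\notin B_1$, rule out edges between same-type vertices via a book of size about $n/3$, force a single type per part through common neighbours, and match the two sides by double counting $e(W_2',W_6')$ (your Step~2 proves independence of each part directly, which the paper instead derives from the type argument). One correction is needed: the normalization in Step~3 must swap $2\leftrightarrow3$ alone, which is legitimate because Lemma~\ref{Lemma:roughly-good-structure} and the definitions of $B_1,B_2$ are symmetric in the indices $2,3$. The permutation $(2\,3)(5\,6)$ you cite does not change which of $W_2',W_3'$ is sparse to which of $W_5,W_6$, so it cannot achieve the normalization, and the prism structure is not yet available to invoke at this point.
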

    \begin{proof}
        First, we show that $G[W_{2}'\cup W_{3}']$ is a bipartite graph with parts $W_{2}'$ and $W_{3}'$. For any vertex $x\in W_{2}'\cup W_{3}'$, since $x\not\in B_{1}$, we have $d_{W_{5}'}(x)\le \varepsilon^{1/12}n$ or $d_{W_{6}'}(x) \le \varepsilon^{1/12}n$. We say that $x$ has type-$5$ if the first inequality holds and type-$6$ if the second inequality holds. Indeed, $x$ cannot have both types, otherwise $d_{B}(x)\le |W_{1}|+O(\varepsilon^{1/12}n)\le \left(\frac{1}{6}+O(\varepsilon^{1/12})\right)n$, contradicting Property \ref{ppp2}. If two vertices $x,y$ have the same type, say, type-$5$, then by Property \ref{ppp2}, we have 
        \begin{align*}
            d_{W_{1}\cup W_{6}}(x,y) & \ge d_{W_{1}\cup W_{6}}(x)+ d_{W_{1}\cup W_{6}}(y)- |W_{1}\cup W_{6}| \\
            &\ge d_{B}(x)+d_{B}(y)-|W_{1}\cup W_{6}|-O(\varepsilon^{1/12}n) 
            \ge \left(\frac{1}{3}-O(\varepsilon^{1/12})\right)n> b, 
        \end{align*}
        which implies that $x$ and $y$ are not adjacent. So $G[W_{2}'\cup W_{3}']$ is a bipartite graph. For every vertex $x\in W_{2}'$, since $x\not\in B_{2}$, we have $d_{W_{3}'}(x)\ge \left( \frac{1}{6}-O(\varepsilon^{1/12}) \right)n>|W_{3}'|/2 $. So every pair of vertices in $W_{2}'$ has at least one common neighbor. Thus, all vertices in $W_{2}'$ must have the same type. Similarly, all vertices in $W_{3}'$ also must have the same type. Without loss of generality, we may assume that all vertices in $W_{2}'$ have type-$6$ and all vertices in $W_{3}'$ have type-$5$. 

        Similarly, we can assign type-$2$ or type-$3$ to each vertex $x\in W_{5}'\cup W_{6}'$, according to $d_{W_{2}'}(x)\le \varepsilon^{1/12}n$ or $d_{W_{3}'}(x) \le \varepsilon^{1/12}n$, and, by a similar argument, conclude that $G[W_{5}'\cup W_{6}']$ is a bipartite graph with parts $W_{5}'$ and $W_{6}'$. Since every vertex in $W_{2}'$ has type-$6$, we have $e(W_{6}',W_{2}')=\sum_{x\in W_{2}'}d_{W_{6}'}(x)=O(\varepsilon^{1/12}n^{2})$. If all vertices in $W_{6}'$ have type-$3$, then by Property \ref{ppp2},  
        \[
        e(W_{6}',W_{2}')=\sum_{x\in W_{6}'}d_{W_{2}'}(x)\ge \sum_{x\in W_{6}'}\left(d_{A}(x)-|W_{4}'|-O(\varepsilon^{1/12}n)\right)\ge \left( \frac{1}{36}-O(\varepsilon^{1/12})\right)n^{2}, 
        \]
        which is a contradiction. So all vertices in $W_{6}'$ have type-$2$ and all vertices in $W_{5}'$ have type-$3$. This completes the proof of Claim~\ref{Claim:degree-condition}. 
    \end{proof} 

    Now, we verify that $W_{i}'$'s satisfy the desired property from Lemma~\ref{Lemma:good-structure}. Recall that $|W_{i}'|$'s were estimated when they were defined. 
    By Property \ref{ppp1.5} and Claim~\ref{Claim:degree-condition}, the six parts $W_{i}', i\in [6]$, are pair-wise disjoint. By Claim~\ref{Claim:degree-condition}, $G[W_{i}']$ is empty for $i\in\{2,3,5,6\}$.
    Since every pair of vertices in $W_{1}'$ has at least $\left( \frac{1}{2}-O(\varepsilon^{1/3})\right)n> b$ common neighbors in $A$ by Property \ref{ppp2}, $G[W_{1}']$ must be empty. Similarly, $G[W_{4}']$ is also empty. 

    Finally, we show that for any $i\not=j\in [6]$ and $x\in W_{i}'$, $d_{W_{j}'}(x)=0$ whenever $ij\not\in E(P)$ and $d_{W_{j}'}(x)\ge \left( \frac{1}{6}-O(\varepsilon^{1/12})\right)n$ whenever $ij\in E(P)$. By symmetry, we only need to deal with the cases $i=1$ and $i=2$. 
    
    \noindent\underline{\textbf{Case $i=2$:}} For $j=1$, by Claim~\ref{Claim:degree-condition} and Property \ref{ppp2}, we have $d_{W_{1}'}(x)\ge d_{B}(x)-|W_{5}'|-O(\varepsilon^{1/12}n)\ge \left( \frac{1}{6}-O(\varepsilon^{1/12}) \right)n$. The subcase $j=3$ follows by $x\not\in B_{2}$. For $j=4$, for every vertex $y\in W_{4}'$, by Property \ref{ppp2}, $x$ and $y$ have at least $d_{B}(x)+d_{B}(y)-|B|\ge \left( \frac{1}{3}-O(\varepsilon^{1/3}) \right)n$ common neighbors in $B$, thus cannot be adjacent. So, $d_{W_{4}'}(x)=0$. For $j=5$, Claim~\ref{Claim:degree-condition} and Property \ref{ppp2} yields $d_{W_{5}'}(x)\ge d_{B}(x)-|W_{1}'|-O(\varepsilon^{1/12}n)\ge \left( \frac{1}{6}-O(\varepsilon^{1/12}) \right)n$. For $j=6$, take any $y\in W_{6}'$. By Claim~\ref{Claim:degree-condition}, Property \ref{ppp2}, and $x\not\in B_{2}$, we have 
    \begin{align*}
        d_{W_{3}'}(x,y) &\ge d_{W_{3}'}(x)+d_{W_{3}'}(y)-|W_{3}'|\\ 
        &\ge d_{W_{3}'}(x)+d_{A}(y)-|W_{3}'|-|W_{2}'|-|W_{4}'|-O(\varepsilon^{1/12}n)\ge  \left( \frac{1}{6}-O(\varepsilon^{1/12}) \right)n.
    \end{align*}
    Similarly, we have $d_{W_{5}'}(x,y)\ge \left( \frac{1}{6}-O(\varepsilon^{1/12}) \right)n$.
    Thus, $x,y$ have at least $\left( \frac{1}{3}-O(\varepsilon^{1/12}) \right)n$ common neighbors and cannot be adjacent. So, $d_{W_{6}'}(x)=0$. 
    
    \noindent\underline{\textbf{Case $i=1$:}} By Property \ref{ppp2}, we have 
    \[
    d_{W_{j}'}(x)\ge d_{A}(x)-(|A|-|W_{j}'|)\ge \left( \frac{1}{6}-O(\varepsilon^{1/12}) \right)n,
    \]
    for $j= 2,3,4$. By symmetry, the subcases $j=5,6$ follow from subcase $(i,j)=(2,4)$.
    This completes the proof of Lemma~\ref{Lemma:good-structure}.
\end{proof}

\section{From the exceptional set to parameter inequalities}\label{sec:parameters}

Let $W_{i}$ for $i\in [6]$ and $R$ be provided by Lemma~\ref{Lemma:good-structure}. In this section, after making a few adjustments to these subsets, we will combine the initial assumption on $G$ with some new arguments to derive three inequalities involving the sizes of the resulting subsets (see Lemma~\ref{count}). 

First, we divide the exceptional set $R$ into three parts according to the degrees of the vertices as 
\begin{align*}
    R_1 &\coloneqq \left\{x\in R: d_G(x)< \big(\frac{1}{3}+C_{1} \varepsilon^{1/12}\big)n \right\}, \\
    R_2 &\coloneqq \left\{x\in R: \big(\frac{1}{3}+C_{1} \varepsilon^{1/12}\big)n\le d_G(x)< \big(\frac{5}{12}+C_{1} \varepsilon^{1/12}\big)n \right\}, \\ 
    R_3 &\coloneqq \left\{x\in R: d_G(x)\ge \big(\frac{5}{12}+C_{1} \varepsilon^{1/12}\big)n \right\},
\end{align*} 
where $C_{1}$ is a large constant to be determined later. Set $r_i\coloneqq |R_i|$ for each $i\in [3]$, and $r\coloneqq r_1+r_2$.
Then, let $\varphi\colon R_{2}\cup R_{3}\to [6]$ be the function that will be given by Lemma~\ref{Claim:move-back} and take $W'_i\coloneqq W_i\cup \{x\in R_3:\varphi(x)=i \}$ for each $i\in [6]$.
Set $a_i\coloneqq |W'_i|$ for each $i\in [6]$, $\mathbf{a}\coloneqq (a_1,a_2,a_3,a_4,a_5,a_6)$, and $n_1\coloneqq \| \mathbf{a}\|_1$. Without loss of generality, we may assume that 
\begin{align}\label{condition2}
    a_1=\max \{a_i :i\in [6] \} \ {\text{and}}\  a_1\ge a_2\ge a_3.
\end{align}

For any real vector $\mathbf{x}=(x_{i})_{1\le i\le 6}$, the functions $S$, $T$, $F$, $H_1$, and $H_2$ are defined as:
\begin{align*}
    & S(\mathbf{x}) \coloneqq \sum_{ij\in E(P)}x_ix_j, \\
    & T(\mathbf{x}) \coloneqq x_1 x_2 x_3+ x_4 x_5 x_6, \\
    & F(\mathbf{x}) \coloneqq  T(\mathbf{x}) - x_1\left(S(\mathbf{x})-\left\lfloor\frac{\lVert \mathbf{x}\rVert _{1}^{2}}{4}\right\rfloor \right)
        - b^2(\lVert \mathbf{x}\rVert _{1}-4b),\\
    & H_1(\mathbf{x}) \coloneqq S(\mathbf{x})
        - \left\lfloor\frac{\lVert \mathbf{x}\rVert _{1}^{2}}{4}\right\rfloor - \frac{\lVert \mathbf{x} \rVert _{1}r}{12},\\
    & H_2(\mathbf{x})\coloneqq S(\mathbf{x})-x_3(x_1-b) - \left\lfloor\frac{\lVert \mathbf{x}\rVert _{1}^{2}}{4}\right\rfloor - \frac{\lVert \mathbf{x}\rVert _{1}r}{12}.
\end{align*}

Now we state the main result of this section. 

\begin{lemma}\label{count}
    The vector $\mathbf{a}=(a_{i})_{1\le i\le 6}$ and integer $r$ defined above satisfy the following statements.
    \begin{enumerate}
        \item \label{pro1} $r=O(\varepsilon^{1/12}n)$, $|a_{i}-\frac{n}{6}|=O(\varepsilon^{1/12}n)$, $a_{1}=\max\{a_{i}\colon i\in [6]\}$, and $a_{1}\ge a_{2}\ge a_{3}$;
        \item \label{pro2} $\mathbf{a}\ne \left(b,b,\left\lfloor \frac{n-4b}{2}\right\rfloor,b,b,\left\lceil \frac{n-4b}{2}\right\rceil \right)$ and $\mathbf{a}\ne \left(b,b,\left\lceil \frac{n-4b}{2}\right\rceil,b,b,\left\lfloor \frac{n-4b}{2}\right\rfloor \right)$;
        \item \label{pro3} $F(\mathbf{a})\le O(\varepsilon^{1/12}\lVert\mathbf{a}\rVert_{1}^{2}r)$, $H_1(\mathbf{a})\ge -O( \varepsilon^{1/12} \lVert\mathbf{a}\rVert _{1}r)$, and $H_2(\mathbf{a})\ge - O(\varepsilon^{1/12}\lVert\mathbf{a}\rVert _{1}r)$.
    \end{enumerate}
\end{lemma}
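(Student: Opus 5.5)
The plan is to read the three items of Lemma~\ref{count} off the structure given by Lemma~\ref{Lemma:good-structure} and the assignment $\varphi$ of Lemma~\ref{Claim:move-back}, which I take as given. That lemma should say that each $x\in R_2\cup R_3$ can be placed into a part $\varphi(x)$ so that $x$ has no neighbours (or only $O(\varepsilon^{1/12}n)$) in the parts $W_j$ with $j\varphi(x)\notin E(P)$, $j\neq \varphi(x)$. It should also say that $x\in R_3$, whose degree exceeds $5n/12$, behaves like a genuine member of $W_{\varphi(x)}$, so that the enlarged family $W_i'$ is still a spanning-subgraph blow-up of $P$ up to $O(\varepsilon^{1/12}n)$ defects per vertex.

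\textbf{Item~\ref{pro1}.} This is bookkeeping: $r\le |R|=O(\varepsilon^{1/12}n)$, $|a_i-|W_i||\le |R|$, and the ordering is the normalisation \eqref{condition2}.

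\textbf{Item~\ref{pro2}.} Suppose $\mathbf a$ equals one of the two forbidden vectors. Then $n_1=n$, so $r=0$ and every vertex lies in some $W_i'$. Hence $G$ is a spanning subgraph of the blow-up of $P$ with these part sizes, which is exactly $S_{b,n}$. Since $e(G)\ge\lfloor n^2/4\rfloor=e(S_{b,n})$, we get $G\cong S_{b,n}$, contradicting the standing assumption.

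\textbf{Item~\ref{pro3}, edge count.} Let $m$ be the number of missing edges along prism pairs among the $W_i'$. Write $n=n_1+r$ and note that
\[
\lfloor n^2/4\rfloor\ge \lfloor n_1^2/4\rfloor+\tfrac{n_1r}{2}.
\]
Vertices of $R_1\cup R_2$ have degree below $(\tfrac5{12}+C_1\varepsilon^{1/12})n$, so
\[
\lfloor n_1^2/4\rfloor+\tfrac{n_1r}{2}\le e(G)\le S(\mathbf a)-m+\Bigl(\tfrac{5}{12}+O(\varepsilon^{1/12})\Bigr)n_1r .
\]
This gives $m\le H_1(\mathbf a)+O(\varepsilon^{1/12}n_1r)$, and $m\ge 0$ yields the bound on $H_1$. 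I will record the sharper per-vertex form $\sum_{x\in R_1\cup R_2}(n/2-d(x))\lesssim S(\mathbf a)-\lfloor n_1^2/4\rfloor-m$, which is needed for $F$.

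\textbf{Item~\ref{pro3}, bound on $H_2$.} This uses the book bound. For an edge $xy$ with $x\in W_2'$, $y\in W_3'$, every vertex of $W_1'$ adjacent to both is a common neighbour, so at least $a_1-b$ vertices of $W_1'$ miss $x$ or $y$. There are at least $a_2a_3-m$ such edges, and each missing $W_1$--$(W_2\cup W_3)$ edge is charged at most $\max(a_2,a_3)=a_2$ times. So the missing edges in those pairs number at least $a_3(a_1-b)-O(\varepsilon^{1/12}nr)$, after absorbing $m(a_1-b)/a_2$ into the error. Inserting this into the edge inequality gives the bound on $H_2$.

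\textbf{Item~\ref{pro3}, bound on $F$.} Count triangles.
\begin{itemize}
\item Inside $\bigcup W_i'$: each missing prism edge lies in at most $\max_k a_k=a_1$ triangles of the blow-up, so this part contributes at least $T(\mathbf a)-a_1m$.
\item Through $x\in R_2$: write $d=d(x)$. Since the largest independent union of parts in $P$ has size about $n/3$, a neighbourhood of size $d$ in the near-complete blow-up spans at least $(d-n/3)\,n/6-O(\varepsilon^{1/12}n^2)$ edges. So $x$ lies in that many triangles.
\end{itemize}
Combine the $R_2$ count with the slack $a_1(n/2-d)$ coming from the sharper edge inequality; for $R_1$ vertices, the slack $a_1(n/2-d)\ge a_1 n/6$ alone suffices. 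Either way each $x\in R_1\cup R_2$ is worth at least $n^2/36-O(\varepsilon^{1/12}n^2)$, which matches $b^2$ up to the allowed error.

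Adding up, $t(G)\ge T(\mathbf a)-a_1\bigl(S(\mathbf a)-\lfloor n_1^2/4\rfloor\bigr)+b^2r-O(\varepsilon^{1/12}n_1^2r)$. Comparing with $t(G)\le b^2(n-4b)=b^2(n_1-4b)+b^2r$ gives $F(\mathbf a)\le O(\varepsilon^{1/12}n_1^2r)$.

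\textbf{Main obstacle.} The hard part is the per-vertex triangle count for $R_2$ with an error that is genuinely $O(\varepsilon^{1/12}n^2)$ per vertex, so that it is linear in $r$. It is also delicate to make sure triangles through several $R$-vertices, or through $R_3$ vertices with a few defective adjacencies, are not double counted or lost. I would first prove the neighbourhood-edge lower bound by optimising over how $d$ splits among the six parts, using the minimum-degree condition of Lemma~\ref{Lemma:good-structure}.
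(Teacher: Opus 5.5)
Your items 1 and 2 and your $H_1$ and $F$ bounds match the paper (the $F$ computation is the paper's, organised per vertex). The $H_2$ step, however, has a real gap: you cannot absorb $m(a_1-b)/a_2$ into $O(\varepsilon^{1/12}nr)$. Nothing at this stage bounds $m$ by $O(nr)$. When $r=0$ one only knows $m\le S(\mathbf a)-\lfloor n^2/4\rfloor$, and for part sizes $n/6\pm O(\varepsilon^{1/12}n)$ this can be of order $\varepsilon^{1/6}n^2$. Yet the lemma asserts $H_2(\mathbf a)\ge 0$ exactly when $r=0$, and Section~\ref{sec:analysis} uses this exactness (end of the proof of Lemma~\ref{a2a1}, Case~2 of the final proof).

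The repair is to note that the shortfall in edges $xy$ comes only from the $m_{23}$ non-edges between $W_2'$ and $W_3'$, and these are themselves counted in $m$. Your charging gives $\overline{e}(W_1',W_2')+\overline{e}(W_1',W_3')\ge (a_2a_3-m_{23})(a_1-b)/a_2$, hence
\[
m\ \ge\ m_{23}\Bigl(1-\frac{a_1-b}{a_2}\Bigr)+a_3(a_1-b)\ \ge\ a_3(a_1-b),
\]
since $a_1-b\le a_2$ (the bound is trivial if $a_1<b$). This is the same exact inequality the paper obtains by averaging $\overline{e}(W_1',W_3')\ge d_{W_3'}(u)\bigl(d_{W_1'}(u)-b\bigr)$ over $u\in W_2'$ and rearranging. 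Your corrected double count is a slightly cleaner route to it.

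The same exactness requirement rules out your hedged reading of Lemma~\ref{Claim:move-back}. Since $R_3$ is not counted in $r$, allowing $O(\varepsilon^{1/12}n)$ defects per vertex of $R_3$ would create an error $O(\varepsilon^{1/12}nr_3)$ that is not $O(\varepsilon^{1/12}nr)$. Item 2 also needs $G'\subseteq Q$ exactly. You must use the lemma as stated: $d_{W_j}(x)=0$ for every $j$ with $j\varphi(x)\notin E(P)$, including $j=\varphi(x)$, and there are no edges between vertices of $R_3$ with non-adjacent labels. With that version the rest of your plan goes through.

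Your triangle count through $x\in R_2$ differs from the paper's. The paper fixes $\varphi(x)=1$, confines $N(x)$ to $W_2\cup W_3\cup W_4$, and bounds $d_{W_2'}(x)d_{W_3'}(x)$. You instead minimise the number of edges spanned by $N(x)\cap\bigcup_i W_i$ over how it splits among the six parts. This is valid and does not need $\varphi$ on $R_2$. The justification, though, needs more than ``the largest independent union has size about $n/3$''. A pairwise-exchange argument (the edge count is multilinear in the split) reduces to at most one partially filled part, i.e.\ two full parts plus one partial part. If the two full parts are non-adjacent, the third part is adjacent to one of them because $\alpha(P)=2$, which gives the bound $(d-n/3)n/6-O(\varepsilon^{1/12}n^2)$.
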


To prove Lemma \ref{count}, we need the following lemma, which characterizes the properties of the exceptional set $R$.

\begin{lemma}\label{Claim:move-back}
        There exists a function $\varphi\colon R_{2}\cup R_{3}\to [6]$ such that the following statement holds. 
        For any vertex $x\in R_{2}\cup R_{3}$, we have $d_{W_i}(x)=0$ whenever $i\varphi(x)\notin E(P)$. 
        Furthermore, any two vertices $x,y$ in $R_{3}$ with $\varphi(x)\varphi(y)\notin E(P)$ are not adjacent. 
\end{lemma}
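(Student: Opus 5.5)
The plan is to exploit the book bound $b\le(\frac16+\varepsilon)n$ against the almost complete bipartite graphs that Lemma~\ref{Lemma:good-structure} provides along the prism edges. Fix $x\in R_2\cup R_3$ and write $d_i\coloneqq d_{W_i}(x)$. Two facts drive everything.

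First, the degree lower bound together with $|R|=O(\varepsilon^{1/12}n)$ gives
\[
\sum_i d_i\ge d(x)-|R|\ge \left(\tfrac13+(C_1-O(1))\varepsilon^{1/12}\right)n .
\]
Second, each $d_i\le |W_i|\le (\frac16+O(\varepsilon^{1/12}))n$. The key local inequality is the following. If $d_i>0$, pick $y\in N_{W_i}(x)$. By Lemma~\ref{Lemma:good-structure}, $y$ misses only $O(\varepsilon^{1/12}n)$ vertices of each $W_j$ with $ij\in E(P)$. Hence
\[
b\ge d(x,y)\ge \sum_{j:\,ij\in E(P)} d_j-O(\varepsilon^{1/12}n),
\qquad\text{so}\qquad
\sum_{j\in N_P(i)} d_j\le \left(\tfrac16+O(\varepsilon^{1/12})\right)n \quad\text{whenever } d_i>0. \tag{$\ast$}
\]

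Next I locate the part $\varphi(x)$. I choose $k\in[6]$ maximizing $\sum_{j\in N_P(k)}d_j$, and I want to show that this sum exceeds $(\frac16+O(\varepsilon^{1/12}))n$ by a margin of order $C_1\varepsilon^{1/12}n$. Then I set $\varphi(x)\coloneqq k$ and check that $d_i=0$ for every $i\notin N_P(k)$, which splits into two cases.

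\begin{itemize}
\item If $i=k$, then $d_k>0$ would contradict $(\ast)$ outright.
\item If $i$ is at distance $2$ from $k$ in the prism, then $N_P(i)$ shares exactly two vertices with $N_P(k)$; call the third vertex of $N_P(k)$ $j_0$. For example, with $k=1$ and $i=5$ the shared vertices are $2,4$ and $j_0=3$. Now $(\ast)$ for $i$ bounds the two shared terms by $(\frac16+O)n$. The sum over $N_P(k)$ is at least about $\frac13 n$, so $d_{j_0}\ge(\frac16+\Omega(C_1\varepsilon^{1/12}))n>|W_{j_0}|$ once $C_1$ is large, a contradiction.
\end{itemize}

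To get the needed lower bound on $\sum_{N_P(k)}d_j$, I do a short case analysis on the prism. The complement of $N_P(k)$ in $[6]$ is $\{k\}$ together with the two distance-$2$ vertices. I start from any $i_0$ with $d_{i_0}>0$; by $(\ast)$ its three neighbours carry weight at most about $n/6$, so at least about $n/6+C_1\varepsilon^{1/12}n$ sits on $i_0$ and its two non-neighbours. I then repeat $(\ast)$ for whichever of those three carry positive weight. By the symmetry of $P$ there are only a handful of configurations, and in each one the six-term total of at least $\frac13 n+C_1\varepsilon^{1/12}n$ is forced into a single neighbourhood $N_P(k)$, with every part outside it empty.

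This finite case check, keeping the $O(\varepsilon^{1/12})$ errors below $C_1\varepsilon^{1/12}$, is the step I expect to be the main obstacle. If it becomes messy, I would first run it with ``heavy'' parts $d_i\ge \sqrt{\varepsilon^{1/12}}\,n$ to find $k$, and only afterwards apply the distance argument above to kill the light parts exactly.

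For the second statement, take $x,y\in R_3$ adjacent with $\varphi(x)\varphi(y)\notin E(P)$. By the first part, the neighbourhoods of $x$ and $y$ inside $\bigcup W_i$ lie in $\bigcup_{N_P(\varphi(x))}W_j$ and $\bigcup_{N_P(\varphi(y))}W_j$ respectively, each of size at least $(\frac5{12}+C_1\varepsilon^{1/12})n-|R|$. There are two cases.

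\begin{itemize}
\item If $\varphi(x)=\varphi(y)$, both neighbourhoods lie in the same three parts, of total size about $n/2$. Then $d(x,y)\ge 2\cdot\frac5{12}n-\frac12 n=\frac13 n>b$.
\item If the two labels are at distance $2$, the union of their neighbourhoods covers four parts, of size about $\frac23 n$. Then $d(x,y)\ge \frac56 n-\frac23 n+(2C_1-O(1))\varepsilon^{1/12}n>(\frac16+\varepsilon)n\ge b$.
\end{itemize}

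Either case contradicts $b(G)\le b$, so $x$ and $y$ cannot be adjacent. This also explains the threshold $\frac5{12}$ in the definition of $R_3$.
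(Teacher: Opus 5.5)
Your inequality $(\ast)$ is the same local estimate the paper uses: a neighbour $y\in N_{W_i}(x)$ sees almost all of $\bigcup_{j\in N_P(i)}W_j$, so the book on $xy$ caps $\sum_{j\in N_P(i)}d_j$. Your proof of the second statement is essentially the paper's computation. Same label gives $d(x,y)\gtrsim 2\cdot\frac5{12}n-\frac12 n$, and labels at distance $2$ give $d(x,y)\gtrsim 2\cdot\frac5{12}n-\frac23 n$ plus a $2C_1\varepsilon^{1/12}n$ margin.

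The gap is in the first statement: you never show that the support of $(d_j)$ lies inside a single $N_P(k)$. Your distance-$2$ bullet needs $\sum_{j\in N_P(k)}d_j\ge(\frac13+\Omega(C_1\varepsilon^{1/12}))n$, and nothing you say about $k$ being a maximiser supplies this. That bound is essentially the conclusion itself (all of $x$'s weight inside $N_P(k)$), and you postpone it to a ``finite case check'' that you do not carry out. As written, the bullets are therefore either circular or redundant, and the existence of $\varphi$ is asserted rather than proved.

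The missing observation is that for each matching edge $ii'\in\{14,25,36\}$ of the prism, $N_P(i)$ and $N_P(i')$ partition $[6]$. Your set ``$i_0$ together with its two non-neighbours'' is exactly $N_P(i_0')$. Hence if $d_i>0$, then $(\ast)$ at $i$ gives $\sum_{j\in N_P(i')}d_j\ge\sum_j d_j-(\frac16+O(\varepsilon^{1/12}))n\ge(\frac16+(C_1-O(1))\varepsilon^{1/12})n$, and $(\ast)$ at $i'$ then forces $d_{i'}=0$. This is the paper's first step ($d_{W_1}(x)d_{W_4}(x)=0$, etc.).

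The support is then contained in a transversal of the matching, i.e.\ in some $N_P(k)$ or in one of the two triangles. The paper reduces by symmetry to $d_2=d_3=0$ and rules out $d_4>0$ via $d(x)\le|R|+|W_4|+d_{W_1\cup W_5\cup W_6}(x)\le(\frac13+O(\varepsilon^{1/12}))n$. With $d_2=d_3=d_4=0$ the correct label is $\varphi(x)=4$, not $1$ as the paper writes; your labelling convention is the right one.

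Alternatively, once you have this observation, your maximiser idea closes in a few lines. For any $i_0$ in the support, the maximiser satisfies $\sum_{N_P(k)}d_j\ge\sum_{N_P(i_0')}d_j$, which exceeds the threshold in $(\ast)$, so $d_k=0$. If some $i$ at distance $2$ from $k$ had $d_i>0$, its partner $i'$ lies in $N_P(k)$ and has $d_{i'}=0$. Then the vertex $m$ with $N_P(m)=(N_P(k)\setminus\{i'\})\cup\{i\}$ has $\sum_{N_P(m)}d_j=\sum_{N_P(k)}d_j+d_i$, contradicting maximality.
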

\begin{proof}
    Suppose that $x\in R_{2}\cup R_{3}$.
    If both $d_{W_{1}}(x)$ and $d_{W_{4}}(x)$ are positive, then take two vertices $y_1\in N_{W_{1}}(x)$ and $y_2\in N_{W_{4}}(x)$. 
    By Lemma~\ref{Lemma:good-structure}, we have 
    \begin{align*}
        d_{W_{2}\cup W_{3}\cup W_{4}}(x,y_1)&\ge d_{W_{2}\cup W_{3}\cup W_{4}}(x)+3\times(\frac{1}{6}-O(\varepsilon^{1/12}))n-|W_{2}\cup W_{3}\cup W_{4}|\\ &\ge d_{W_{2}\cup W_{3}\cup W_{4}}(x)-O(\varepsilon^{1/12}n).
    \end{align*}
    Since $d_{W_{2}\cup W_{3}\cup W_{4}}(x,y_1) \le b < \left(\frac{1}{6}+\varepsilon\right)n$, we deduce $d_{W_{2}\cup W_{3}\cup W_{4}}(x)\le (\frac{1}{6}+O(\varepsilon^{1/12}))n$.
    Similarly, analyzing $d_{W_{1}\cup W_{5}\cup W_{6}}(x,y_2)$ yields $d_{W_{1}\cup W_{5}\cup W_{6}}(x)\le (\frac{1}{6}+O(\varepsilon^{1/12}))n$.
    It follows that
    \begin{align*}
        d_{G}(x)\le |R|+d_{W_{2}\cup W_{3}\cup W_{4}}(x)+d_{W_{1}\cup W_{5}\cup W_{6}}(x) 
        &\le \frac{n}{3}+O(\varepsilon^{1/12})n< (\frac{1}{3}+C_{1}\varepsilon^{1/12})n,
    \end{align*}
    provided $C_1$ is sufficiently large, which is a contradiction. So, $d_{W_{1}}(x)=0$ or $d_{W_{4}}(x)=0$. Similarly, we have $d_{W_{2}}(x)d_{W_{5}}(x)=0$ and $d_{W_{3}}(x)d_{W_{6}}(x)=0$. By symmetry, we may assume that $d_{W_{2}}(x)=d_{W_{3}}(x)=0$.
    If $d_{W_{4}}(x)>0$, then provided $C_1$ is sufficiently large, we have
    \[
    d_{G}(x)\le |R|+|W_{4}|+d_{W_{1}\cup W_{5}\cup W_{6}}(x)< (\frac{1}{3}+C_{1}\varepsilon^{1/12})n,
    \]
    a contradiction. So, we have $d_{W_{4}}(x)=0$, and hence we can take $\varphi(x)=1$. 
    Analogously, we can arrange $\varphi(x)$ for each $x\in R_{2}\cup R_{3}$ with $d_{G}(x)\ge (\frac{1}{3}+C_{1}\varepsilon^{1/12})n $. 

    Now, let $x,y$ be two vertices in $R_{3}$ with $ \varphi(x)\varphi(y)\notin E(P)$.
    Then, we have $d_{W_i}(x)=0$ whenever $i\varphi(x)\notin E(P)$, and $d_{W_i}(y)=0$ whenever $i\varphi(y)\notin E(P)$.
    Assume, for a contradiction, that $x$ is adjacent to $y$. Without loss of generality, assume that $\varphi(x)=1$. If $\varphi(y)=1$, then 
    \begin{align*}
        d_G(x,y)&\ge (d_G(x)-|R|)+(d_G(y)-|R|)-|W_2\cup W_3 \cup W_4|\\ &\ge d_G(x) +d_G(y)-\left(\frac{1}{2}+O(\varepsilon^{1/12})\right)n.
    \end{align*}
    Since $d_G(x,y)\le b< (\frac{1}{6}+\varepsilon)n$, we obtain $d_G(x) +d_G(y)\le \left(\frac{2}{3}+O(\varepsilon^{1/12})\right)n$,
    which contradicts the assumption $\min \{d_G(x), d_G(y) \}\ge (\frac{5}{12}+C_{1} \varepsilon^{1/12})n$ provided $C_1$ is sufficiently large.
    It follows that $\varphi(y)\ne 1$, and $\varphi(y)$ equals 5 or 6. Without loss of generality, assume $\varphi(y)=5$. Then, 
    \begin{align*}
        d_G(x,y)&\ge (d_G(x)-|R|-|W_3|)+(d_G(y)-|R|-|W_6|)-|W_2\cup W_4|\\ &\ge d_G(x) +d_G(y)-\left(\frac{2}{3}+O(\varepsilon^{1/12})\right)n.
    \end{align*}
    Again, using $b< (\frac{1}{6}+\varepsilon)n$, we obtain $d_G(x) +d_G(y)\le \left(\frac{5}{6}+O(\varepsilon^{1/12})\right)n$,
    which contradicts $\min \{d_G(x), d_G(y) \}\ge (\frac{5}{12}+C_{1} \varepsilon^{1/12})n$ provided $C_1$ is sufficiently large. This completes the proof of Lemma~\ref{Claim:move-back}.
\end{proof}

Now, let us prove Lemma \ref{count}.

\begin{proof}[Proof of Lemma~\ref{count}]

Lemma \ref{Lemma:good-structure} implies that 
\begin{align}\label{condition1}
    r= O(\varepsilon^{1/12}n),\ 
    \left|a_i - \frac{n}{6}\right| = O(\varepsilon^{1/12}n)\ {\text{for each}}\ i\in [6] ,\ {\text{and}}\ n_1= n - O(\varepsilon^{1/12}n).
\end{align}
Let $Q$ be the blow-up of $P$ such that the vertex $i$ in $P$ is blown up to an independent set of size $a_i$ for each $i\in [6]$.
By Lemma \ref{Claim:move-back}, the induced subgraph $G'\coloneqq G[\cup_{i=1}^6 W'_i]$ of $G$ is a spanning subgraph of $Q$. Note that $|V(G')|=n_1$, $e(Q)=S(\mathbf{a})$, and $t(Q)=T(\mathbf{a})$. 

We claim that $\mathbf{a}\ne \left(b,b,\left\lfloor \frac{n-4b}{2}\right\rfloor,b,b,\left\lceil \frac{n-4b}{2}\right\rceil \right)$ and $\mathbf{a}\ne \left(b,b,\left\lceil \frac{n-4b}{2}\right\rceil,b,b,\left\lfloor \frac{n-4b}{2}\right\rfloor \right)$.
Otherwise, $Q$ is isomorphic to $S_{b,n}$.
Moreover, we have $|V(G)|-|V(G')|=n-n_1=0$, and hence $G=G'$.
Since $G=G'$ is a subgraph of $Q$, we have
$$e(G)\le e(Q)= e(S_{b,n})=\left\lfloor \frac{n^2}{4} \right\rfloor\le e(G),$$
which implies $e(G)= e(Q)$. 
Therefore, $G$ is isomorphic to $S_{b,n}$, which is a contradiction to our assumption.

    Now, it suffices to show that the vector $\mathbf{a}$ and the integer $r$ satisfy Property \ref{pro3} in Lemma~\ref{count}. We first show that $F(\mathbf{a})\le O(\varepsilon^{1/12}\lVert \mathbf{a}\rVert _{1}^{2}r)$. 
    For any edge $e\in E(Q)$, let $t_Q(e)$ be the number of triangles in $Q$ containing $e$.
    Recall that $e(G)\ge \left\lfloor n^2/4 \right\rfloor$. We find
    \begin{align*}
        e(G')\ge e(G)- \sum_{x\in R_1\cup R_2}d_G(x)\ge \left\lfloor \frac{n^2}{4} \right\rfloor - \sum_{x\in R_1\cup R_2}d_G(x).
    \end{align*}
    It follows that
    \begin{align}\label{tri1}\notag
        t(Q)-t(G')\le \sum_{e\in E(Q)\backslash E(G')}t_Q(e) &\le a_1\cdot |E(Q)\backslash E(G')| \\ &\le a_1\left(e(Q)-\left\lfloor\frac{n^2}{4}\right\rfloor + \sum_{x\in R_1\cup R_2}d_G(x) \right).
    \end{align}
    
    For any vertex $x\in R_2$, let $t(x)$ be the number of triangles consisting of $x$ and two vertices in $V(G')$. We claim that there is a constant $C$ such that for any $x\in R_2$, we have $t(x)\ge (d_G(x)-\frac{n}{3})\frac{n}{6}- C\varepsilon^{1/12}n^2$.
    Without loss of generality, assume that $\varphi(x)=1$ and $d_{W'_2}(x)\ge d_{W'_3}(x)$.
    For any $y\in N_{W_2}(x)$, Lemma \ref{Lemma:good-structure} shows that $d_{W'_3}(y)\ge \left( \frac{1}{6}-O(\varepsilon^{1/12}) \right)n$.
    Thus, the number of triangles consisting of $x,y$, and a vertex in $W'_3$ is at least
    $$d_{W'_3}(x)+d_{W'_3}(y)-a_3\ge d_{W'_3}(x)- O(\varepsilon^{1/12}n).$$
    Therefore, we derive
    \begin{align*}
        t(x)\ge d_{W_2}(x) (d_{W'_3}(x)- O(\varepsilon^{1/12}n))&= (d_{W'_2}(x)-O(\varepsilon^{1/12}n)) (d_{W'_3}(x)- O(\varepsilon^{1/12}n))\\
        &= d_{W'_2}(x)d_{W'_3}(x)- O(\varepsilon^{1/12}n^2).
    \end{align*}
    Since $d_{W'_2}(x)+d_{W'_3}(x)\ge d_G(x)-|R|-a_4= d_G(x)-\frac{n}{6}-O(\varepsilon^{1/12}n)$,
    we deduce that
    \begin{align*}
        t(x)&\ge (d_G(x)-\frac{n}{6}-O(\varepsilon^{1/12}n)-a_2)a_2- O(\varepsilon^{1/12}n^2)
        \\ &= (d_G(x)-\frac{n}{3}-O(\varepsilon^{1/12}n))(\frac{n}{6}-O(\varepsilon^{1/12}n))- O(\varepsilon^{1/12}n^2)
        \\ &\ge (d_G(x)-\frac{n}{3})\frac{n}{6}- O(\varepsilon^{1/12}n^2),
    \end{align*}
    as claimed. It follows that
    \begin{align}\label{tri2}
        t(G)-t(G')\ge \sum_{x\in R_2} t(x)\ge \sum_{x\in R_2} (d_G(x)-\frac{n}{3})\frac{n}{6} - O(\varepsilon^{1/12}n^2r_2).
    \end{align}
    Note that the above inequality still holds when $r_2=0$ as the right-hand side would be 0.

    Combining the inequalities \eqref{tri1} and \eqref{tri2}, we obtain that $t(G)-t(Q)$ is at least
    \begin{align*}
        &\sum_{x\in R_2} (d_G(x)-\frac{n}{3})\frac{n}{6}  - a_1\left(e(Q)-\left\lfloor\frac{n^2}{4}\right\rfloor + \sum_{x\in R_1\cup R_2}d_G(x) \right)- O(\varepsilon^{1/12}n^2r_2)\\
        = &\sum_{x\in R_2} d_G(x)(\frac{n}{6}-a_1) -\frac{n^2r_2}{18} - a_1\left(e(Q)-\left\lfloor\frac{n^2}{4}\right\rfloor + \sum_{x\in R_1}d_G(x) \right)- O(\varepsilon^{1/12}n^2r_2).
    \end{align*}
    Since $|a_1-\frac{n}{6}|=O(\varepsilon^{1/12}n)$ and $d_G(x)=O(n)$, we have
    \begin{align*}
        t(G)-t(Q)&\ge -\frac{n^2r_2}{18} - a_1\left(e(Q)-\left\lfloor\frac{n^2}{4}\right\rfloor + \sum_{x\in R_1}d_G(x) \right)- O(\varepsilon^{1/12}n^2r_2).
    \end{align*}
    According to $|a_1-\frac{n}{6}|=O(\varepsilon^{1/12}n)$, $r=r_1+r_2$, and the definition of $R_1$, we find
    \begin{align*}
        t(G)-t(Q)&\ge -\frac{n^2r_2}{18} - a_1\left(e(Q)-\left\lfloor\frac{n^2}{4}\right\rfloor  \right)- a_1(\frac{1}{3}+C_{1} \varepsilon^{1/12})nr_1 - O(\varepsilon^{1/12}n^2r_2)\\
        &= -\frac{n^2r_2}{18} - a_1\left(e(Q)-\left\lfloor\frac{n^2}{4}\right\rfloor  \right)- \frac{n}{6}\times \frac{1}{3}\times nr_1-O(\varepsilon^{1/12}n^2r_1) - O(\varepsilon^{1/12}n^2r_2)\\
        &= -\frac{n^2r}{18} - a_1\left(e(Q)-\left\lfloor\frac{n^2}{4}\right\rfloor  \right)- O(\varepsilon^{1/12}n^2r).
    \end{align*}
    By $n_1= n - O(\varepsilon^{1/12}n)$, $\left|a_1 - \frac{n}{6}\right| = O(\varepsilon^{1/12}n)$, and the fact that
    \begin{align}\label{floor}
        \left\lfloor\frac{n^2}{4}\right\rfloor = \left\lfloor\frac{n_1^2+r^2+2n_1r}{4}\right\rfloor\ge \left\lfloor\frac{n_1^2}{4}\right\rfloor + \frac{n_1r}{2},
    \end{align}
    we derive
    \begin{align*}
        t(G)-t(Q)&\ge -\frac{n^2r}{18} - a_1\left(e(Q)-\left\lfloor\frac{n_1^2}{4}\right\rfloor  \right)+ \frac{a_1n_1r}{2}-O(\varepsilon^{1/12}n^2r)\\
        &= -\frac{n_1^2r}{18} - a_1\left(e(Q)-\left\lfloor\frac{n_1^2}{4}\right\rfloor \right)+ \frac{n_1}{6}\times\frac{n_1r}{2} -O(\varepsilon^{1/12}n^2r)\\
        &= \frac{n_1^2r}{36} - a_1\left(e(Q)-\left\lfloor\frac{n_1^2}{4}\right\rfloor  \right)-O(\varepsilon^{1/12}n_1^2r).
    \end{align*}
    Recall that $t(G)\le b^2(n-4b)$ and $b=\frac{n}{6}+O(\varepsilon n)$. The above inequality yields
    \begin{align*}
        0&\ge t(Q)+\frac{n_1^2r}{36} - a_1\left(e(Q)-\left\lfloor\frac{n_1^2}{4}\right\rfloor  \right)- b^2(n-4b)-  O(\varepsilon^{1/12}n_1^2r)\\
        &=t(Q)- a_1\left(e(Q)-\left\lfloor\frac{n_1^2}{4}\right\rfloor  \right)- b^2(n_1-4b)-  O(\varepsilon^{1/12}n_1^2r).
    \end{align*}
    The desired result then follows by the fact that $t(Q)=T(\mathbf{a})$ and $e(Q)=S(\mathbf{a})$.

    Next, we show that $H_1(\mathbf{a})\ge -O( \varepsilon^{1/12} \lVert \mathbf{a}\rVert _{1}r)$.
    By the assumption that $e(G)\ge \lfloor n^2/4 \rfloor$ and the definition of $R_1$ and $R_2$, we find
    \begin{align*}
        \left\lfloor \frac{n^2}{4} \right\rfloor\le e(G)\le e(G')+\sum_{x\in R_1\cup R_2}d_G(x)\le e(G')+(\frac{5}{12}+C_1 \varepsilon^{1/12})nr.
    \end{align*}
    Together with $n= n_1+r$, $r= O(\varepsilon^{1/12}n)$, and inequality \eqref{floor}, we derive
    \begin{align}\notag
        0&\le e(G')+(\frac{5}{12}+C_1 \varepsilon^{1/12})(n_1+r)r - \left\lfloor\frac{n_1^2}{4}\right\rfloor - \frac{n_1r}{2}+r\\ \label{pre}
        &\le e(G') - \left\lfloor\frac{n_1^2}{4}\right\rfloor - \frac{n_1r}{12}+ O(\varepsilon^{1/12}n_1r).
    \end{align}
    Since $e(G')\le e(Q)=S(\mathbf{a})$, we have $H_1(\mathbf{a})\ge -O(\varepsilon^{1/12} \lVert \mathbf{a}\rVert _{1}r)$.
    
    Finally, we show that $H_2(\mathbf{a})\ge -O( \varepsilon^{1/12} \lVert \mathbf{a}\rVert _{1}r)$. For $1\le i<j\le 3$, let $\overline{e}(W'_i,W'_j)$ be the number of non-edges between $W'_i$ and $W'_j$.
    Let $u$ be any fixed vertex in $W'_2$. For any $v\in N_{W'_3}(u)$, the number of non-neighbors of $v$ in $W'_1$ is at least $d_{W'_1}(u)-b$ (otherwise, $u$ and $v$ will have more than $b$ common neighbors).
    It follows that
    $$\overline{e}(W'_1,W'_3)\ge d_{W'_3}(u)(d_{W'_1}(u)-b).$$
    Summing the above inequality over all $u\in W'_2$ yields
    \begin{align}\label{A1A3}
        \overline{e}(W'_1,W'_3)\ge \frac{1}{a_2}\sum_{u\in W'_2} d_{W'_3}(u)(d_{W'_1}(u)-b).
    \end{align}
    Note that 
    \begin{align}\label{A2}
        \overline{e}(W'_2,W'_1)+\overline{e}(W'_2,W'_3)\ge \sum_{u\in W'_2}(a_1-d_{W'_1}(u)+a_3-d_{W'_3}(u)).
    \end{align}
    Summing inequalities \eqref{A1A3} and \eqref{A2}, we derive 
    \begin{align*}
        \sum_{1\le i<j\le 3}\overline{e}(W'_i,W'_j)&\ge \sum_{u\in W'_2}(\frac{d_{W'_3}(u)(d_{W'_1}(u)-b)}{a_2} +a_1-d_{W'_1}(u)+a_3-d_{W'_3}(u))\\
        & =\sum_{u\in W'_2}(\frac{a_3(d_{W'_1}(u)-b)}{a_2} +a_1-d_{W'_1}(u)+\frac{(a_3-d_{W'_3}(u))(a_2-d_{W'_1}(u)+b)}{a_2})\\ 
        & \ge \sum_{u\in W'_2}(\frac{a_3(d_{W'_1}(u)-b)}{a_2} +a_1-d_{W'_1}(u)+\frac{(a_3-d_{W'_3}(u))(a_2-a_1+b)}{a_2}).
    \end{align*}
    Since $b\ge \frac{n}{6}$ and $|a_i-\frac{n}{6}|=O(\varepsilon^{1/12} n)$ for each $i\in [3]$, we have
    $$\frac{(a_3-d_{W'_3}(u))(a_2-a_1+b)}{a_2}\ge 0.$$
    Thus, we have
    \begin{align}\notag
        \sum_{1\le i<j\le 3}\overline{e}(W'_i,W'_j)
        & \ge \sum_{u\in W'_2}\left(\frac{a_3(d_{W'_1}(u)-b)}{a_2} +a_1-d_{W'_1}(u)\right)\\ \notag
        &=\sum_{u\in W'_2}\left(\frac{a_3(a_1-b)}{a_2} +\frac{(a_1-d_{W'_1}(u))(a_2-a_3)}{a_2}\right)\\ \label{pre2}
        &\ge \sum_{u\in W'_2}\frac{a_3(a_1-b)}{a_2}=a_3(a_1-b).
    \end{align}
    Combining the inequalities \eqref{pre}, \eqref{pre2}, and the fact that 
    $$e(Q)-e(G')\ge \sum_{1\le i<j\le 3}\overline{e}(W'_i,W'_j),$$
    we derive
    \begin{align}\notag
        0\le e(Q)-a_3(a_1-b) - \left\lfloor\frac{n_1^2}{4}\right\rfloor - \frac{n_1r}{12}+ O(\varepsilon^{1/12}n_1r).
    \end{align}
    Then by $e(Q)=S(\mathbf{a})$, we have $H_2(\mathbf{a})\ge -O( \varepsilon^{1/12} \lVert \mathbf{a}\rVert _{1}r)$, proving Lemma~\ref{count}.
\end{proof}

\section{Final parameter analysis}\label{sec:analysis}
Let $r$ and $\mathbf{a}=(a_{i})_{1\le i\le 6}$ be given by Lemma~\ref{count}. For convenience, set $n_{1}\coloneqq \lVert \mathbf{a}\rVert _{1}$. In this section, we focus on the inequalities $F(\mathbf{a})\le O(\varepsilon^{1/12} n_{1}^{2} r)$, $H_1(\mathbf{a})\ge -O(\varepsilon^{1/12} n_{1} r)$, and $H_2(\mathbf{a})\ge -O(\varepsilon^{1/12} n_{1} r)$.
We will first show that, if $a_1\le b$, then 
$F(\mathbf{a})\le O(\varepsilon^{1/12} n_{1}^{2} r)$ and $H_1(\mathbf{a})\ge -O(\varepsilon^{1/12} n_{1} r)$ cannot hold simultaneously unless the vector $\mathbf{a}$ coincides with that of the extremal graph, contradicting Property~\ref{pro2} of Lemma~\ref{count}. 
Assuming $a_1\ge b+1$, we employ an adjustment argument to prove that inequalities $F(\mathbf{a})\le O(\varepsilon^{1/12} n_{1}^{2} r)$ and $H_2(\mathbf{a})\ge -O(\varepsilon^{1/12} n_{1} r)$ are mutually exclusive, which also leads to a contradiction. This will complete our proof of Theorem~\ref{MainTheorem}.

As we mentioned above, we first deal with the case $a_{1}\le b$. 

\begin{lemma}\label{aleb}
    We have $a_1\ge b+1$.
\end{lemma}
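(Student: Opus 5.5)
Assume for contradiction that $a_1\le b$. By \eqref{condition2}, $a_1$ is the largest coordinate, so every $a_i\le b$. The plan is to show that the inequalities $F(\mathbf{a})\le O(\varepsilon^{1/12}n_1^2r)$ and $H_1(\mathbf{a})\ge -O(\varepsilon^{1/12}n_1r)$ of Lemma~\ref{count} force $r=0$ and force $\mathbf{a}$ to be the parameter vector of $S_{b,n}$. That conclusion contradicts Property~\ref{pro2}.

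\textbf{Step 1: lower-bound $F$.} The expression $S(\mathbf{a})-\lfloor n_1^2/4\rfloor$ appears with coefficient $-a_1$ in $F$. We control it through $H_1$: we have $S(\mathbf{a})-\lfloor n_1^2/4\rfloor\ge n_1r/12-O(\varepsilon^{1/12}n_1r)$. So we cannot simply substitute it. Instead we work with the quantity
\[
\Phi(\mathbf{a}):=T(\mathbf{a})-a_1\Bigl(S(\mathbf{a})-\tfrac{n_1^2}{4}\Bigr)-b^2(n_1-4b)
\]
and show that $\Phi(\mathbf{a})$ is at least of order $n_1^2\cdot\bigl(S(\mathbf{a})-n_1^2/4\bigr)$, up to a $\Theta(n)$ multiple of the distance to the extremal vector.

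Write $s=a_1+a_2+a_3$ and $t=a_4+a_5+a_6$. Then $S=e(K_s)\text{-part}+e(K_t)\text{-part}+a_1a_4+a_2a_5+a_3a_6$. Also $S-n_1^2/4=-(s-t)^2/4+\dots$, since the triangle-free bipartite-like count $st$ is replaced by the prism count.

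Concretely, with all $a_i\le b$, I would use AM--GM-type estimates of the form $a_1a_2a_3\ge \dots$. The cleanest route is probably to fix $n_1$ and $S(\mathbf{a})$, and minimize $T$ subject to $a_i\le b$. The constraint $a_i\le b$ is the only nonconvex feature. The minimizer of the product of three numbers with fixed sum and pairwise products constrained pushes two coordinates up to the cap $b$ in each triple, which is exactly the $S_{b,n}$ shape $(b,b,c,b,b,c')$.

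I would implement this by a smoothing (adjustment) argument. First, if some triple has two coordinates below $b$, shift mass between them; this keeps $\|\mathbf{a}\|_1$ and decreases $F+\lambda H_1$ for a suitable multiplier $\lambda\approx a_1$. Second, use the Rearrangement Inequality to align the matching edges $a_1a_4+a_2a_5+a_3a_6$ in the order that maximizes $S$. This is the favorable direction for $H_1$ and harmless for $T$.

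\textbf{Step 2: handle the term in $r$.} After reducing to the vector $(b,b,c,b,b,c')$ with $c+c'=n_1-4b$, we get $T=b^2(n_1-4b)$ exactly. We also get $S-\lfloor n_1^2/4\rfloor\le 0$, with equality only when $|c-c'|\le1$.

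Combining this with $H_1\ge -O(\varepsilon^{1/12}n_1r)$ gives $-n_1r/12\ge -O(\varepsilon^{1/12}n_1r)$. Hence $r=0$, and equality holds everywhere.

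Once $r=0$, we have $n_1=n$, so $c+c'=n-4b$. Equality in each smoothing step forces $\mathbf{a}$ to already have the extremal shape, which contradicts Property~\ref{pro2}.

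\textbf{Main obstacle.} I expect the main obstacle to be making the smoothing quantitative. We need $F(\mathbf{a})+a_1H_1(\mathbf{a})$ to be bounded below by $c_0\,n_1 r$ plus a positive multiple of the squared (or $\ell_1$) deviation from the extremal vector. This bound must beat the $O(\varepsilon^{1/12})$ error terms, and it must deal with the floor functions.

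My first attempt would be a direct second-order expansion around $a_i=n/6$. Write $a_i=n/6+\delta_i$ and $b=n/6+\beta$ with $\delta_i\le\beta$. Then expand $F+a_1H_1$ as a polynomial in the $\delta_i$ and $\beta$. The leading cubic-in-$n$ terms cancel, and the $n\cdot(\text{quadratic})$ terms must be checked to be positive semidefinite on the cone $\{\delta_i\le\beta\}$, vanishing only at the extremal configuration.
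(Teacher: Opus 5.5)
Your outline (argue by contradiction, sort $a_4,a_5,a_6$ by rearrangement, compare with the $S_{b,n}$-shape, force $r=0$, then contradict Property~\ref{pro2} via the equality case) matches the paper's skeleton. The step that actually combines the two inequalities does not work. Observe that, identically,
\[
F(\mathbf{a})+a_1H_1(\mathbf{a})=T(\mathbf{a})-b^2(n_1-4b)-\frac{a_1n_1r}{12}.
\]
The $S$-terms cancel, so the functional you propose to smooth and bound from below has discarded the edge-count information entirely. It reduces to $T(\mathbf a)\ge b^2(n_1-4b)$ for capped vectors, which is true because $xyz-b^2(x+y+z-2b)$ is multilinear and nonnegative at the vertices of $[0,b]^3$. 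It equals $-bn_1r/12<0$ at the extremal vector once $r\ge1$, so the inequality in your ``main obstacle'' is false. More fundamentally, Lemma~\ref{count} bounds $F$ from above and $H_1$ from below, so no lower bound on $F+\lambda H_1$ with $\lambda>0$ can produce a contradiction.

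Step~2 has the matching defect. Evaluating $H_1$ at the reduced vector $(b,b,c,b,b,c')$ is legitimate only if your smoothing never decreases $H_1$ and never increases $F$, and it fails both. For $\mathbf{a}=(b,b,b,x',x',x')$ with $x'<b$, capping the second triple gives $(b,b,b,b,b,3x'-2b)$. This lowers $S$ by $3(b-x')^2$ and raises $F$ by $(b-x')^3$. Here $S(\mathbf a)-n_1^2/4=\frac34(b-x')^2>0$, whereas every vector of the shape $(b,b,c,b,b,c')$ has $S\le\lfloor n_1^2/4\rfloor$. For the same reason, ``fix $S$ and minimize $T$'' cannot land on that shape, since $H_1$ forces $S(\mathbf a)\ge\lfloor n_1^2/4\rfloor+n_1r/12-O(\varepsilon^{1/12}n_1r)$.

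The missing idea is to use the two inequalities in sequence, with $F$ controlling $S-\lfloor n_1^2/4\rfloor$ from above. Let $\mathbf a'$ be $\mathbf a$ with $a_4,a_5,a_6$ sorted decreasingly. The paper compares it with
\[
\mathbf a''=\bigl(a_1,a_1,\lceil (n_1-4a_1)/2\rceil,a_1,a_1,\lfloor (n_1-4a_1)/2\rfloor\bigr).
\]
Two features matter: the cap is $a_1$, not $b$, and the last two coordinates are balanced, so $S(\mathbf a'')=\lfloor n_1^2/4\rfloor$ exactly. A direct computation shows that $F(\mathbf a')-F(\mathbf a'')$ equals the sum of the nonnegative terms
\[
a_1(a_1-a_2)(a_5'-a_6'),\quad a_4'(a_1-a_5')(a_1-a_6'),\quad a_1(a_1-a_4')(a_1-a_6'),\quad a_1\bigl\lfloor (a_2+a_3+a_4'+a_5'-3a_1-a_6')^2/4\bigr\rfloor.
\]
Moreover $F(\mathbf a'')=a_1^2(n_1-4a_1)-b^2(n_1-4b)\ge 0$, because $x\mapsto x^2(n_1-4x)$ decreases for $x\ge n_1/6$ and $n_1/6\le a_1\le b$.

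Hence $F(\mathbf a)\le O(\varepsilon^{1/12}n_1^2r)$ forces $a_1-a_4'$, $a_1-a_5'$, $a_1-a_2$ and $|a_3-a_6'|$ all to be $O(\varepsilon^{1/24}\sqrt{n_1r})+O(1)$. Getting $a_1-a_2$ small also uses the floor term and $a_2\ge a_3$. Next, $S(\mathbf a')-\lfloor n_1^2/4\rfloor$ can be written as a sum of products of two such deviations minus a floor, so it is $O(\varepsilon^{1/12}n_1r)$. This contradicts $H_1(\mathbf a')\ge H_1(\mathbf a)\ge -O(\varepsilon^{1/12}n_1r)$ unless $r=0$. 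With $r=0$ every inequality is tight, which gives $a_1=a_2=a_4=a_5=b$ and $|a_3-a_6|\le 1$.
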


\begin{proof}
    For a contradiction, we assume $a_{1}\le b$.
    Let $a_4',a_5',a_6'$ be a permutation of $a_4,a_5,a_6$ such that $a_4'\ge a_5'\ge a_6'$.
    Set $\mathbf{a}'\coloneqq (a_1,a_2,a_3,a_4',a_5',a_6')$.
    Then, $\|\mathbf{a}' \|_1=n_1$ and $T(\mathbf{a}')= T(\mathbf{a})$.
    By the Rearrangement Inequality, we have 
    \begin{align}\label{replace1}
         F(\mathbf{a}')-F(\mathbf{a}) 
        =&\ a_1(a_1a_4+a_2a_5+a_3a_6-a_1a_4'-a_2a_5'-a_3a_6')\le 0,
    \end{align}
    with equality if and only if $(a_i-a_j)(a_{i+3}-a_{j+3})\ge 0$ for all $1\le i< j\le 3$.
    Set 
    \begin{align*}
        a_3''\coloneqq \left\lceil (a_2+a_3+a_4'+a_5'+a_6'-3a_1)/2 \right\rceil=\left\lceil (n_{1}-4a_1)/2 \right\rceil, \\
        a_6''\coloneqq \left\lfloor (a_2+a_3+a_4'+a_5'+a_6'-3a_1)/2 \right\rfloor=\left\lfloor (n_{1}-4a_1)/2 \right\rfloor, 
    \end{align*}
    and $\mathbf{a}''\coloneqq (a_1,a_1,a_3'',a_1,a_1,a_6'')$.
    Then, $\|\mathbf{a}'' \|_1=n_1$.
    Observe that
    \begin{align}\notag
        F(\mathbf{a}'')- F(\mathbf{a}')= &\ a_1 (a_2 - a_1)(a_5' - a_6')
        -a_4' (a_1 - a_5')(a_1 - a_6') 
        -a_1 (a_1 - a_4')(a_1 - a_6') \\ \label{replace4}
        &-a_1 \left\lfloor \frac{(a_3+a_2+ a_5'+a_4'-3a_1-a_6')^2}{4}\right\rfloor
        \\ \label{replace2} \le &\ 0,
    \end{align}
    Note that $T(\mathbf{a}'')=a_1^2(n_1-4a_1)$ and $S(\mathbf{a}'')=\left\lfloor n_1^2/4 \right\rfloor$. Hence,
    \begin{align*}
        F(\mathbf{a}'')=a_1^2(n_1-4a_1)-b^2(n_1-4b)=(b-a_1)(4a_1^2+4b^2+4a_1b-a_1n_1-bn_1).
    \end{align*}
    By the assumption that $b>n/6\ge n_1/6$ and $a_1=\max \{  a_i:i\in [6] \}\ge n_1/6$, we deduce that 
    $4a_1^2+4b^2+4a_1b-a_1n_1-bn_1>0$. 
    Thus,
    \begin{align}\label{replace3}
        F(\mathbf{a}'')\ge 0,
    \end{align}
    with equality if and only if $a_1=b$. 
    Summing the inequalities~\eqref{replace1},~\eqref{replace4},~\eqref{replace3}, and $F(\mathbf{a})\le O(\varepsilon^{1/12} n_{1}^{2} r)$ leads to
    \begin{align*}
        a_1 (a_1 - a_2)(a_5' - a_6')
        &+a_4' (a_1 - a_5')(a_1 - a_6') 
        +a_1 (a_1 - a_4')(a_1 - a_6') \\ 
        &+a_1 \left\lfloor \frac{(a_3+a_2+ a_5'+a_4'-3a_1-a_6')^2}{4}\right\rfloor \le O(\varepsilon^{1/12} n_{1}^{2} r).
    \end{align*}
    Note that every term on the left-hand side of the above inequality is non-negative. So, every term on the left-hand side of the above inequality is at most $O(\varepsilon^{1/12} n_{1}^{2} r)$. Together with the fact that $a_1=\frac{n}{6}+O(\varepsilon^{1/12}n)$ and $a'_4=\frac{n}{6}+O(\varepsilon^{1/12}n)$, we have
    \begin{align}
        (a_1 - a_2)(a_5' - a_6')&= O(\varepsilon^{1/12}n_1r), \label{small1}\\
        (a_1 - a_5')(a_1 - a_6')&= O(\varepsilon^{1/12}n_1r), \label{small2}\\
        a_3+a_2+ a_5'+a_4'-3a_1-a_6'&\le O(\varepsilon^{1/24}n_1^{1/2}r^{1/2})+1. \label{small3}
    \end{align}
    Since $a_1\ge a_4' \ge a_5'\ge a_6'$, inequality \eqref{small2} shows that
    \begin{align}\label{small4}
        a_1-a_4'\le a_1-a_5'\le \sqrt{(a_1 - a_5')(a_1 - a_6')}= O(\varepsilon^{1/24}n_1^{1/2}r^{1/2}).
    \end{align}
    By inequalities \eqref{small3}, \eqref{small4}, and the assumption that $a_2\ge a_3$, we get
    \begin{align*}
        2(a_1-a_2)&= (a_3-a_2)+(a_5'-a_6')-(a_1-a_4')-(a_3+a_2+ a_5'+a_4'-3a_1-a_6')\\
        &\le a_5'-a_6'+ O(\varepsilon^{1/24}n_1^{1/2}r^{1/2}+1).
    \end{align*}
    Substituting the above inequality into inequality~\eqref{small1} yields
    \begin{align*}
        (a_1 - a_2)(2(a_1 - a_2)-O(\varepsilon^{1/24}n_1^{1/2}r^{1/2}+1))\le O(\varepsilon^{1/12}n_1r),
    \end{align*}
    which implies 
    \begin{align}\label{small5}
        a_1 - a_2=O(\varepsilon^{1/24}n_1^{1/2}r^{1/2}+1).
    \end{align}
    It follows that
    \begin{align}\notag
        |a_3-a_6'|&\le (a_1 - a_2)+(a_1 - a_5')+(a_1-a_4')+|a_3+a_2+ a_5'+a_4'-3a_1-a_6'|\\ \label{small6}
        &=O(\varepsilon^{1/24}n_1^{1/2}r^{1/2}+1).
    \end{align}

    Recall from Lemma \ref{count} that $H_1(\mathbf{a})\ge -O(\varepsilon^{1/12} n_{1} r)$. 
    It follows by the Rearrangement Inequality that 
    \begin{align}\notag
        H_1(\mathbf{a}')&=H_1(\mathbf{a})+ 
        a_1a_4'+a_2a_5'+a_3a_6'-a_1a_4-a_2a_5-a_3a_6\\ \label{Ha} &\ge H_1(\mathbf{a})\ge -O(\varepsilon^{1/12} n_{1} r).
    \end{align}
    Observe that
    \begin{align*}
        H_1(\mathbf{a}') 
        =&(a_2-a_1)(a_3-a_6')+(a_1 - a_2)(a_4'-a_1)+(a_1-a_5')(a_3-a_6')+ (a_1-a_5')(a_2-a_1)\\
        &+(a_1-a_4')(a_3-a_6')+(a_1 - a_2)(a_1-a_5')+ (a_1-a_5')(a_1-a_4')\\&- \left\lfloor\frac{(a_1+a_2+a_3-a_4'-a_5'-a_6')^2}{4}\right\rfloor- \frac{n_1r}{12}.
    \end{align*}
    Using inequalities \eqref{small4}, \eqref{small5}, and \eqref{small6}, we derive
    \begin{align*}
        H_1(\mathbf{a}')\le O(\varepsilon^{1/12} n_{1} r)- \frac{n_1r}{12},
    \end{align*}
    contradicting inequality \eqref{Ha} when $r\ge 1$ for sufficiently small $\varepsilon$. Therefore, $r=0$. 

    Now, combining \eqref{replace1}, \eqref{replace2}, \eqref{replace3}, and $F(\mathbf{a})\le O(\varepsilon^{1/12} n_{1}^{2} r)=0$ implies that all these inequalities hold with equality. 
    That is, $(a_i-a_j)(a_{i+3}-a_{j+3})\ge 0$ for all $1\le i< j\le 3$, $(a_2 - a_1)(a_5' - a_6')=0$, $(a_1 - a_5')(a_1 - a_6')=0$, $(a_1 - a_4')(a_1 - a_6')=0$, $|a_3+a_2+ a_5'+a_4'-3a_1-a_6'|\le 1$, and $a_1=b$.
    This yields $a_1= a_2= a_4= a_5=b$ and $|a_3-a_6|\le 1$, which contradicts Property~\ref{pro2} in Lemma~\ref{count}. This completes the proof of Lemma~\ref{aleb}.
\end{proof}

    According to Lemma \ref{aleb}, we have $a_1\ge b+1$. As mentioned at the beginning of this section, we will derive contradiction by a sequence of adjustment lemmas. 
    Let $a_4',a_5',a_6'$ be a permutation of $a_4,a_5,a_6$ such that $a_4'\ge a_5'\ge a_6'$. 
    Set $\mathbf{a}'\coloneqq (a_1,a_2,a_3,a_4',a_5',a_6')$.
    Then, $\|\mathbf{a}' \|_1=n_1$, and the Rearrangement Inequality leads to $F(\mathbf{a}')\le F(\mathbf{a})\le O(\varepsilon^{1/12} n_{1}^{2} r)$
    and $H_2(\mathbf{a}')\ge H_2(\mathbf{a})\ge -O(\varepsilon^{1/12} n_{1} r)$.

    The following lemma shows that we can adjust the value of $a_2$ into $a_1$. 
    
    \begin{lemma}\label{a2a1}
        There exists a vector $\mathbf{a}''=(a''_{i})_{1\le i\le 6}$ satisfying the following properties.
        \begin{itemize}
            \item[1.] $\| \mathbf{a}'' \|_1=n_1$;
            \item[2.] $a''_1 =a''_2 =\max\{a''_i \colon i\in [6]\}\ge b+1$ and $a''_6 = \min \{a''_4 ,a''_5 ,a''_6  \}$;
            \item[3.] $a''_i =\frac{n}{6}+O(\varepsilon^{1/12}n)$ for each $i\in [6]$;
            \item[4.] $F(\mathbf{a}'')\le  O(\varepsilon^{1/12} n_{1}^{2} r)$ and $H_2(\mathbf{a}'')\ge -O(\varepsilon^{1/12} n_{1} r)$.
        \end{itemize}
    \end{lemma}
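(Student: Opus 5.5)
The plan is to reach $\mathbf{a}''$ from $\mathbf{a}'=(a_1,a_2,a_3,a_4',a_5',a_6')$ by a finite sequence of unit transfers. Each transfer moves one unit from one coordinate to another, so $\|\cdot\|_1=n_1$ is preserved automatically and the floor term $\lfloor n_1^2/4\rfloor$ in $F$ and $H_2$ never changes. Every transfer must be chosen so that $F$ does not increase and $H_2$ does not decrease. We iterate until $a_2$ reaches $a_1$. We never touch $a_1$, so $a_1\ge b+1$ survives, and after each step we re-sort the last three coordinates (harmless by the Rearrangement Inequality, exactly as in passing from $\mathbf{a}$ to $\mathbf{a}'$). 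Since $a_1-a_2=O(\varepsilon^{1/12}n)$, only $O(\varepsilon^{1/12}n)$ units are ever moved. Hence every coordinate stays within $O(\varepsilon^{1/12}n)$ of $n/6$, which gives property 3. Properties 1 and 2 then hold by construction once $a_2=a_1$, provided no coordinate overtakes $a_1$; this is checked directly since we only raise $a_2$ up to $a_1$.

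The first candidate move is $a_3\to a_2$, i.e.\ $(a_2,a_3)\mapsto(a_2+1,a_3-1)$. A direct computation gives
\[
\Delta T=a_1(a_3-a_2-1),\qquad \Delta S=(a_3-a_2-1)+(a_5'-a_6').
\]
Hence $\Delta F=-a_1(a_5'-a_6')\le 0$, so $F$ is always fine under this move. For $H_2$, the term $a_3(a_1-b)$ also drops by $a_1-b$, so
\[
\Delta H_2=(a_1-b)+(a_5'-a_6')-(a_2-a_3)-1 .
\]
This is nonnegative as long as $a_2-a_3<(a_1-b)+(a_5'-a_6')$, and in that regime we simply repeat this move.

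The second candidate move is $a_6'\to a_2$. Because $2$ and $6$ are non-adjacent in $P$, $\Delta S=a_1-a_4'\ge 0$, and since $a_3$ is unchanged, $\Delta H_2=a_1-a_4'\ge 0$. For $F$,
\[
\Delta F=a_1a_3-a_4'a_5'-a_1(a_1-a_4')=-a_1(a_1-a_3)+a_4'(a_1-a_5').
\]
This is nonpositive as long as roughly $a_1-a_3\ge a_1-a_5'$. The plan is to use the second move precisely in the regime where the first move fails, namely when $a_2-a_3$ is large. In that regime $a_1-a_3\ge a_2-a_3$ is large, which should dominate $a_1-a_5'$.

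The main obstacle is the intermediate regime where neither inequality is clearly guaranteed: $a_2-a_3$ is comparable to $(a_1-b)+(a_5'-a_6')$ while $a_1-a_5'$ is also comparable to $a_1-a_3$. To handle it, I would first try a convex combination of the two moves. Concretely, move two units into $a_2$, one from $a_3$ and one from $a_6'$, and add the increments. The $H_2$ gain $a_1-a_4'\ge 0$ of the second move should absorb the possible loss $(a_2-a_3)-(a_1-b)-(a_5'-a_6')$ of the first, while the $F$ gain $a_1(a_5'-a_6')$ of the first offsets part of $a_4'(a_1-a_5')$. If the combination still does not close, the fallback is to use the slack: both target inequalities only need to hold up to $O(\varepsilon^{1/12}n_1^2r)$ and $O(\varepsilon^{1/12}n_1r)$. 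Since all coordinates are $n/6+O(\varepsilon^{1/12}n)$, the bad cases should produce errors that are quadratic in the small deviations, and these should be absorbable into those error terms. I would verify this case split carefully, as it is where the argument is most likely to need adjustment.
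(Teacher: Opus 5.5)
Your increments for the move $a_3\to a_2$ ($\Delta F=-a_1(a_5'-a_6')$, $\Delta H_2=(a_1-b)+(a_5'-a_6')-(a_2-a_3)-1$) and for the cross move $a_6'\to a_2$ ($\Delta H_2=a_1-a_4'$, $\Delta F=-a_1(a_1-a_3)+a_4'(a_1-a_5')$) are correct. However, the cross move is the wrong complement, and this is a genuine gap. When the first move fails, you only know $a_2-a_3\ge(a_1-b)+(a_5'-a_6')$. That says nothing about $a_1-a_5'$, so $\Delta F$ for the cross move can be positive of order $\varepsilon^{1/12}n^2$.

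For example, take $a_1=b+1$, $a_2=a_1-1$, $a_3=a_1-3$, $a_4'=a_1$, $a_5'=a_6'=a_1-D$ with $D\ge 5$. The first move has $\Delta H_2=-2$, and the cross move has $\Delta F=a_1(D-3)>0$. Your two-unit combination (one unit from $a_3$, one from $a_6'$) has
\[
\Delta F=-a_1(a_1-a_3)+a_4'(a_1-a_5')-a_1(a_5'-a_6')-a_1=a_1(D-4)>0 .
\]
Nothing in your argument rules such configurations out.

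The slack fallback does not help either. The permitted errors $O(\varepsilon^{1/12}n_1^2r)$ and $O(\varepsilon^{1/12}n_1r)$ are proportional to $r$, which may be $0$. The deviations $a_1-a_i$, by contrast, are only known to be $O(\varepsilon^{1/12}n)$, independently of $r$. When $a_1\ge b+1$ the balanced vector already has $F<0$, so, unlike in Lemma~\ref{aleb}, the inequality on $F$ alone does not force the deviations to be small in terms of $r$. Losses of order $\varepsilon^{1/12}n^2$ per step over $O(\varepsilon^{1/12}n)$ steps cannot be absorbed.

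The missing idea is to balance the other triangle instead: move a unit from $a_6$ to $a_5$. For this move
\[
\Delta F=a_1\bigl(1-(a_2-a_3)+(a_5-a_6)\bigr)-a_4(1+a_5-a_6),\qquad \Delta H_2=(a_2-a_3)-(a_5-a_6)-1 .
\]
Use the rule ``if $a_5-a_6\ge a_2-a_3$ move $a_3\to a_2$, otherwise move $a_6\to a_5$''. Then both steps are exactly monotone. The first gives $\Delta F\le 0$ and $\Delta H_2\ge a_1-b-1\ge 0$; the second gives $\Delta F\le -a_4<0$ and $\Delta H_2\ge 0$. Your looser criterion for the first move also combines correctly with this second move, since its failure forces $a_5-a_6<a_2-a_3$.

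The price is that $a_5$, not just $a_2$, can now reach $a_1$, a case your plan never meets. The paper handles it as follows:
\begin{enumerate}
\item Swap $a_4$ and $a_5$. This changes $F$ by $a_1(a_1-a_2)(a_4-a_5)\le 0$ and $H_2$ by $(a_1-a_2)(a_5-a_4)\ge 0$.
\item Rerun the process.
\item If $a_5$ reaches $a_1$ a second time, then $a_4=a_5=a_1$. The identity $S-\tfrac{n_1^2}{4}=-\tfrac14(a_1-a_2+a_3-a_6)^2$ then gives $H_2\le -a_3(a_1-b)+\tfrac14-\tfrac{n_1r}{12}<-\tfrac{n_1r}{12}$, contradicting the hypothesis.
\end{enumerate}
Hence the process must end with $a_2=a_1$.
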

    \begin{proof}
        We proceed with the following process. Set $\mathbf{a}^{(0)}=(a_{i}^{(0)})_{1\le i\le 6} \coloneqq  \mathbf{a}'$. 
        For any integer $k\ge 0$, if $a_5^{(k)}-a_6^{(k)}\ge a_2^{(k)}-a_3^{(k)}$, then set $\mathbf{a}^{(k+1)}=(a_{i}^{(k+1)})_{1\le i\le 6}$ with coordinates 
        \[
        a_2^{(k+1)}\coloneqq a_2^{(k)}+1, a_3^{(k+1)}\coloneqq a_3^{(k)}-1,\ \text{ and }\ a_i^{(k+1)}\coloneqq a_i^{(k)} \ \text{ for }\ i=1,4,5,6.
        \]
        Otherwise, if $a_5^{(k)}-a_6^{(k)}< a_2^{(k)}-a_3^{(k)}$, then set $\mathbf{a}^{(k+1)}=(a_{i}^{(k+1)})_{1\le i\le 6}$ with coordinates  
        \[
        a_5^{(k+1)}\coloneqq a_5^{(k)}+1, a_6^{(k+1)}\coloneqq a_6^{(k)}-1, \ \text{ and }\ a_i^{(k+1)}\coloneqq a_i^{(k)} \ \text{ for }\ i=1,2,3,4.
        \]
        We continue this process until $a_2^{(k_0)}= a_1$ or $a_5^{(k_0)}=a_1$ for some $k_0\ge 0$.

        Then, by the definition of the process, we have $\| \mathbf{a}^{(k)} \|_1=n_1$, $a_1^{(k)}=a_1=\max\{a_i^{(k)}:i\in [6] \}$, $a_3^{(k)}=\min\{a_i^{(k)}:i\in [3] \}$, and $a_6^{(k)}=\min\{a_i^{(k)}:i=4,5,6 \}$, for each $k\in \{0,1,..., k_0\}$.
        Recall that $a_1,a_2,a_3,a_4',a_5',a_6'$ are all $\frac{n}{6}+O(\varepsilon^{1/12}n)$.
        Note that this process finishes within at most $a_1-a_2+a_1-a_5'=O(\varepsilon^{1/12}n)$ steps.
        So, we have $a_i^{(k)}=\frac{n}{6}+O(\varepsilon^{1/12}n)>0$ for each $i\in [6]$ and $0\le k\le k_0$. 
        We next prove $F(\mathbf{a}^{(k_0)})\le F(\mathbf{a}')$ and $H_2(\mathbf{a}^{(k_0)})\ge H_2(\mathbf{a}')$.

        We first show that, during the process, the value of $F$ does not increase.
        Let $k$ be an integer with $0\le k< k_0$.
        If $a_5^{(k)}-a_6^{(k)}\ge a_2^{(k)}-a_3^{(k)}$, then 
        \begin{align}\label{adjust1}
         F(\mathbf{a}^{(k+1)})-F(\mathbf{a}^{(k)}) 
        =&\ a_1^{(k)}(a_6^{(k)}-a_5^{(k)})\le 0.
        \end{align}
        If $a_5^{(k)}-a_6^{(k)}< a_2^{(k)}-a_3^{(k)}$, then we have
        \begin{align}\notag
        &\ F(\mathbf{a}^{(k+1)})-F(\mathbf{a}^{(k)})\\ \notag
        =&\ a_1^{(k)} - a_1^{(k)} a_2^{(k)} + a_1^{(k)} a_3^{(k)} - a_4^{(k)} + a_1^{(k)} a_5^{(k)} - a_4^{(k)} a_5^{(k)} - a_1^{(k)} a_6^{(k)} + a_4^{(k)} a_6^{(k)}\\ \notag
        =&\ a_1^{(k)} \bigl(1 - a_2^{(k)} + a_3^{(k)} + a_5^{(k)} - a_6^{(k)}\bigr) \;+\; a_4^{(k)} \bigl(-1 - a_5^{(k)} + a_6^{(k)}\bigr)\\ \label{adjust2}
        \le&\ a_1^{(k)}\times 0 + a_4^{(k)}\times (-1)< 0.
        \end{align}
        Therefore, the value of $F$ does not increase, and hence $F(\mathbf{a}^{(k_0)})\le F(\mathbf{a}')\le O(\varepsilon^{1/12}n_1^2r)$.

        Next, we show that the value of $H_2$ does not decrease during the process. Let $k$ be an integer with $0\le k< k_0$.
        Assume that $a_5^{(k)}-a_6^{(k)}\ge a_2^{(k)}-a_3^{(k)}$. 
        By the assumption that $a_1^{(k)}=a_1>b$, we have
        \begin{align}\notag
        H_2(\mathbf{a}^{(k+1)})-H_2(\mathbf{a}^{(k)}) 
        =&\  a_1^{(k)} - a_2^{(k)} + a_3^{(k)} - b + a_5^{(k)} - a_6^{(k)}-1\\ \label{adjust3}
        \ge&\  a_1^{(k)}- b -1\ge 0.
        \end{align}
        Now, assume that $a_5^{(k)}-a_6^{(k)}< a_2^{(k)}-a_3^{(k)}$. Then,
        \begin{align}\label{adjust4}
        H_2(\mathbf{a}^{(k+1)})-H_2(\mathbf{a}^{(k)})
        = a_2^{(k)}-a_3^{(k)}-a_5^{(k)}+a_6^{(k)}-1\ge 0.
        \end{align}
        Thus, the value of $H_2$ does not decrease, and hence $H_2(\mathbf{a}^{(k_0)})\ge H_2(\mathbf{a}')\ge -O(\varepsilon^{1/12}n_1r)$.

    If $a_2^{(k_0)}=a_1$, then taking $\mathbf{a}''\coloneqq \mathbf{a}^{(k_{0})}$ yields the desired result. Now, suppose that $a_2^{(k_0)}\ne a_1$. Then, by the definition of the process, we have $a_5^{(k_0)}=a_1$.
    
    Let us take $\mathbf{a}^{(k_0+1)}= (a_{i}^{(k_{0}+1)})_{1\le i\le 6}$ with coordinates 
    \[
    a_4^{(k_0+1)}\coloneqq a_5^{(k_0)}, a_5^{(k_0+1)}\coloneqq a_4^{(k_0)},\ \text{ and }\ 
    a_i^{(k_0+1)}\coloneqq a_i^{(k_0)} \ \text{ for } \ i=1,2,3,6.
    \]
    Since $a_1^{(k_0)}=a_5^{(k_0)}=a_1\ge \max\{a_2^{(k_0)},a_3^{(k_0)},a_4^{(k_0)},a_6^{(k_0)} \}$, we derive
    \begin{align*}
        &F(\mathbf{a}^{(k_0+1)})-F(\mathbf{a}^{(k_0)})
        =a_1^{(k_0)}(a_1^{(k_0)}-a_2^{(k_0)})(a_4^{(k_0)}-a_5^{(k_0)})\le 0, \qquad \mbox{and}\\
        &H_2(\mathbf{a}^{(k_0+1)})-H_2(\mathbf{a}^{(k_0)})
        = (a_1^{(k_0)}-a_2^{(k_0)})(a_5^{(k_0)}-a_4^{(k_0)})\ge 0.
    \end{align*}
    It follows that $F(\mathbf{a}^{(k_0+1)})\le O(\varepsilon^{1/12}n_1^{2}r)$ and $H_2(\mathbf{a}^{(k_0+1)})\ge -O(\varepsilon^{1/12} n_1r)$.

    Let us take the process stated at the beginning of the proof again: 
    For any integer $k\ge  k_0+1$, if $a_5^{(k)}-a_6^{(k)}\ge a_2^{(k)}-a_3^{(k)}$, then set $\mathbf{a}^{(k+1)}=(a_{i}^{(k+1)})_{1\le i\le 6}$ with the coordinates
    \[
    a_2^{(k+1)}\coloneqq a_2^{(k)}+1, a_3^{(k+1)}\coloneqq a_3^{(k)}-1, \ \text{ and }\  a_i^{(k+1)}\coloneqq a_i^{(k)} \ \text{ for }\ i=1,4,5,6.
    \]
    Otherwise, if $a_5^{(k)}-a_6^{(k)}< a_2^{(k)}-a_3^{(k)}$, then set $\mathbf{a}^{(k+1)}=(a_{i}^{(k+1)})_{1\le i\le 6}$ with the coordinates 
    \[
    a_5^{(k+1)}\coloneqq a_5^{(k)}+1, a_6^{(k+1)}\coloneqq a_6^{(k)}-1, \ \text{ and }\  a_i^{(k+1)}\coloneqq a_i^{(k)} \ \text{ for }\ i=1,2,3,4.
    \]
    We continue this process until $a_2^{(k_1)}= a_1$ or $a_5^{(k_1)}=a_1$ for some $k_1> k_0$.

    Then, we have $\| \mathbf{a}^{(k)} \|_1=n_1$, $a_1^{(k)}=a_4^{(k)}=a_1=\max\{a_i^{(k)}:i\in [6] \}$, $a_3^{(k)}=\min\{a_i^{(k)}:i\in [3] \}$, and $a_6^{(k)}=\min\{a_i^{(k)}:i=4,5,6 \}$, for all $k_0 < k\le k_1$.
    Similarly, since this process finishes within at most $a_1-a_2^{(k_0+1)}+a_1-a_5^{(k_0+1)}=O(\varepsilon^{1/12}n)$ steps,
    we have $a_i^{(k)}=\frac{n}{6}+O(\varepsilon^{1/12}n)>0$ for each $i\in [6]$ and $k_0< k\le k_1$. 
    By the same arguments in inequalities \eqref{adjust1}, \eqref{adjust2}, \eqref{adjust3}, and \eqref{adjust4}, we deduce that the value of $F$ does not increase and the value of $H_2$ does not decrease during the process. Thus, $F(\mathbf{a}^{(k_1)})\le O(\varepsilon^{1/12}n_1^{2}r)$ and $H_2(\mathbf{a}^{(k_1)})\ge -O(\varepsilon^{1/12}n_1r)$.
    If $a_2^{(k_1)}=a_1$, then taking $\mathbf{a}''\coloneqq \mathbf{a}^{(k_1)}$ yields the desired result.
    Now, we suppose that $a_2^{(k_1)}\ne a_1$, and hence $a_5^{(k_1)}=a_1$. 

    By the fact that $\left\lfloor \| \mathbf{a}^{(k_1)} \|_1^2/4\right\rfloor\ge (\| \mathbf{a}^{(k_1)} \|_1^2-1)/4$, we deduce that
    \begin{align*}
        H_2(\mathbf{a}^{(k_1)})\le\ &S(\mathbf{a}^{(k_1)})-a_3^{(k_1)}(a_1-b) - \frac{\| \mathbf{a}^{(k_1)} \|_1^2}{4}+\frac{1}{4} - \frac{n_1r}{12}\\ 
        =\ &- \frac{(a_1-a_2^{(k_1)}-a_6^{(k_1)}+a_3^{(k_1)})^2}{4}-a_3^{(k_1)}(a_1-b) +\frac{1}{4} - \frac{n_1r}{12}\\
        \le\ &-a_3^{(k_1)}(a_1-b)+\frac{1}{4} - \frac{n_1r}{12}.
    \end{align*}
    Together with the assumption that $a_1\ge b+1$ and $a_3^{(k_1)}=\frac{n}{6} +O(\varepsilon^{1/12}n) >\frac{1}{4}$, we obtain $H_2(\mathbf{a}^{(k_1)})< -n_1r/12$, which contradicts $H_2(\mathbf{a}^{(k_1)})\ge -O(\varepsilon^{1/12}n_1r)$ for sufficiently small $\varepsilon$. This completes the proof of Lemma~\ref{a2a1}.
    \end{proof}

    The following lemma shows that we can adjust the values of $a_4''$ and $a_5''$ to be equal.

    \begin{lemma}\label{claim218}
        There exists a vector $\mathbf{a}^* =(a_i^*)_{1\le i\le 6}$ satisfying the following properties. 
        \begin{itemize}
            \item[1.] $\| \mathbf{a}^*  \|_1=n_1$;
            \item[2.] $a_1^* =a_2^*=\max\{a_i^*\colon i\in [6]\}\ge b+1$ and $a_4^*=a_5^*\ge a_6^*$;
            \item[3.] $a_i^*=\frac{n}{6}+ O(\varepsilon^{1/12}n)$ for each $i\in [6]$;
            \item[4.] $F(\mathbf{a}^* )\le O(\varepsilon^{1/12}n_1^2r)$ and $H_2(\mathbf{a}^* )\ge -O(\varepsilon^{1/12} n_1r)$.
        \end{itemize}
    \end{lemma}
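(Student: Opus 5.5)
We will reach $\mathbf{a}^*$ from the vector $\mathbf{a}''$ of Lemma~\ref{a2a1} by a further sequence of unit moves among the coordinates $4,5,6$. Throughout, we will check that each move keeps $\|\cdot\|_1=n_1$, keeps $a_1=a_2$ as the maximum, and keeps every coordinate $\frac n6+O(\varepsilon^{1/12}n)$; the last point holds because the number of steps is $O(\varepsilon^{1/12}n)$. We will also check that each move does not increase $F$ and does not decrease $H_2$. Then Property~4 is inherited from Lemma~\ref{a2a1}, exactly as in that proof.

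The first observation is that, once $a_1=a_2$, both $S$ and $T$ are symmetric in $(x_4,x_5)$: the $x_4,x_5$-dependent part of $S$ is $a_1(x_4+x_5)+x_4x_5+x_6(x_4+x_5)$, and $T$ depends on them through $x_4x_5x_6$. So we may swap $a_4,a_5$ freely and assume $a_4\ge a_5\ge a_6$. The main step is the balancing move $a_4\mapsto a_4-1$, $a_5\mapsto a_5+1$, applied while $a_4-a_5\ge 2$. Writing $d:=a_4-a_5-1\ge 1$, we get $\Delta S=d$ and $\Delta T=d\,a_6$. Hence
\begin{align*}
\Delta F=d(a_6-a_1)\le 0,\qquad \Delta H_2=d\ge 0,
\end{align*}
since $a_1$ is the maximum and the terms $a_3(a_1-b)$ and $\lfloor n_1^2/4\rfloor$ are unchanged. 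This runs until $a_4-a_5\in\{0,1\}$. If the difference is $0$ we stop and set $\mathbf{a}^*$ to be the current vector.

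The remaining case $a_4=a_5+1$ is the parity obstacle, and I expect it to be the hard part. The first thing I would try is the move $a_5\mapsto a_5+1$, $a_6\mapsto a_6-1$, which produces $a_4=a_5\ge a_6$. A direct computation gives
\begin{align*}
\Delta S&=a_1-a_3+a_6-a_5-1,\\
\Delta T&=a_4(a_6-a_5-1),\\
\Delta F&=a_4(a_6-a_5-1)-a_1\,\Delta S.
\end{align*}
If $a_1-a_3\ge a_5-a_6+1$, then $\Delta S\ge 0$. This gives $\Delta H_2\ge 0$, and also $\Delta F\le -a_4<0$, so we are done.

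Otherwise $a_1-a_3\le a_5-a_6$. Here I would imitate the end of the proof of Lemma~\ref{a2a1} and aim for a contradiction rather than a further move. With $a_1=a_2$ and $a_4=a_5+1$, I would expand $S(\mathbf{a})-\frac{n_1^2}{4}$ as minus a square plus lower-order terms controlled by $a_1-a_3$, $a_4-a_6$ and $a_5-a_6$. Combining this with $a_3(a_1-b)\ge a_3=\frac n6+O(\varepsilon^{1/12}n)$ (recall $a_1\ge b+1$), I would show $H_2(\mathbf{a})<-\frac{n_1r}{12}$, which contradicts $H_2\ge -O(\varepsilon^{1/12}n_1r)$.

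If that expansion does not close, my fallback is to use the alternative move $a_4\mapsto a_4-1$, $a_6\mapsto a_6+1$, together with a symmetric move on the $\{2,3\}$ side, and to choose between the two moves according to the sign of $a_1-a_3-(a_5-a_6)$. This mirrors the two-branch rule of Lemma~\ref{a2a1}, where the same kind of comparison made one of the two monotonicity inequalities hold in each branch.
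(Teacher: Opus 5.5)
\textbf{There is a genuine gap.} Your symmetry observation, the balancing move $a_4\mapsto a_4-1$, $a_5\mapsto a_5+1$, and the branch $a_1-a_3\ge a_5-a_6+1$ of the parity case are all correct. The gap is the remaining subcase $a_4=a_5+1$, $a_1-a_3\le a_5-a_6$. There, the contradiction you hope to get from $H_2$ alone does not exist.

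With $a_1=a_2$ and $a_4=a_5+1$, put $D=a_1-a_5\ge 1$ and $\gamma=(a_5-a_6)-(a_1-a_3)$. A direct expansion gives
\[
4S(\mathbf{a})-n_1^2=3D^2+2D(\gamma-1)-(\gamma+1)^2 .
\]
So $S(\mathbf{a})-n_1^2/4$ is not minus a square. It carries a term $+\frac34(a_1-a_5)^2$, and $a_1-a_5$ is not among the differences you planned to control. The expansion at the end of the proof of Lemma~\ref{a2a1} is a negative square only because there $a_4=a_5=a_1$.

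In your subcase $\gamma\ge 0$. For example, with $a_1=a_2=a_3=b+1$ and $a_6=a_5$ one gets
\[
H_2(\mathbf{a})+\frac{n_1r}{12}\ge\frac{(3D+1)(D-1)}{4}-(b+1),
\]
which is positive once $D$ exceeds a suitable constant multiple of $\sqrt n$. That is well within the range $D=O(\varepsilon^{1/12}n)$ allowed by Property~3. Excluding such vectors would need $F$ as well, which is essentially the case analysis in the proof of Theorem~\ref{MainTheorem}.

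The fallback does not repair this. The move $a_4\mapsto a_4-1$, $a_6\mapsto a_6+1$ changes $F$ by $(a_5-a_6)(a_5-a_1)+a_1(a_1-a_3)$. This equals $a_5(a_1-a_3)>0$ when $a_1-a_3=a_5-a_6>0$. If $a_5=a_6$, the same move makes $a_6>a_4$. A move between coordinates $2$ and $3$ breaks $a_1=a_2$.

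The idea you are missing is that the lemma does not ask for integer coordinates. $F$ and $H_2$ are defined on real vectors, and the floor is only evaluated at $\|\mathbf{x}\|_1=n_1$, which is unchanged. The paper sets $a_4^*=a_5^*=(a_4''+a_5'')/2$ in one step. Since $x_4+x_5$ is fixed and $x_4x_5$ grows by $\frac14(a_4''-a_5'')^2$, your own balancing computation gives
\[
F(\mathbf{a}^*)-F(\mathbf{a}'')=\frac14(a_4''-a_5'')^2(a_6''-a_1'')\le 0,\qquad H_2(\mathbf{a}^*)-H_2(\mathbf{a}'')=\frac14(a_4''-a_5'')^2\ge 0,
\]
with no parity case at all.

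A half-integral $a_4^*$ is harmless afterwards, for three reasons:
\begin{itemize}
\item the identities used in the proof of Theorem~\ref{MainTheorem} are polynomial identities valid for real entries;
\item the vectors $\mathbf{a}^{\dagger}$ there depend on $a_4^*,a_5^*$ only through $a_4^*+a_5^*$;
\item the only integrality needed is $a_1^*-b\ge 1$, which holds because $a_1^*=a_1$.
\end{itemize}
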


    \begin{proof}
    Let $\mathbf{a}''=(a''_i )_{1\le i\le 6}$ be the vector given by Lemma~\ref{a2a1}. Let $\mathbf{a}^* =(a_i^*)_{1\le i\le 6}$ be the vector with coordinates
    \[
    a_4^*=a_5^*=(a''_4 +a''_5 )/2 \ \text{ and }\ a_i^*=a''_i  \ \text{ for } i=1,2,3,6. 
    \]
    We shall show that $\mathbf{a}^* $ satisfies the desired properties. The first three properties follow directly by Lemma~\ref{a2a1}.
    It suffices to confirm the fourth condition.
    Indeed, we have
    \begin{align*}
        F(\mathbf{a}^* )-F(\mathbf{a}'')
        =\frac{1}{4}(a''_4 -a''_5 )^2(a''_6 -a''_1 )\le 0, \quad \mbox{and} \quad
         H_2(\mathbf{a}^* )-H_2(\mathbf{a}'')
        =\frac{1}{4}(a''_4 -a''_5 )^2\ge 0.
    \end{align*}
    This completes the proof of Lemma~\ref{claim218}.
    \end{proof}

Finally, we are ready to present the proof of Theorem~\ref{MainTheorem}.

    \begin{proof}[Proof of Theorem~\ref{MainTheorem}.]
    Let $\mathbf{a}^* $ be a vector given by Lemma~\ref{claim218}. Now, we divide the proof into two cases and derive a contradiction in each. 

    \noindent\underline{\textbf{Case 1:}} We have $a_1^* > a_6^* $ and $a_1^* -2b+a_4^*\le -\dfrac{n_1r}{13(a_1^* -a_6^* )}$.
    
    Set $a^{\dagger}_3 \coloneqq a_3^*+a_4^*+a_5^*-4b+2a_1^* $ and $\mathbf{a}^{\dagger} \coloneqq (a_1^* ,a_2^*,a^{\dagger}_3 ,2b-a_1^* ,2b-a_1^* ,a_6^* )$. 
    Then, $\|\mathbf{a}^{\dagger}  \|_1=\|\mathbf{a}^*  \|_1=n_1$.
    Since $|a_i^*-\frac{n}{6}|=O(\varepsilon^{1/12}n)$ for $i\in [6]$, we have $a^{\dagger}_3 =\frac{n}{6}+O(\varepsilon^{1/12}n)> 0$.
    Recall from Lemma~\ref{claim218} that $a_1^* =a_2^*$ and $a_4^* =a_5^*$. So we have
    \begin{align}\label{typo}
        F(\mathbf{a}^{\dagger} )-F(\mathbf{a}^* )          
        = (a_1^*  - 2 b + a_4^* )(2b+a_4^* -a_1^* )(a_1^* -a_6^* ).
    \end{align}
    Since $|a_1^* -\frac{n}{6}|$, $|a_4^* -\frac{n}{6}|$, and $|b-\frac{n}{6}|$ are all $O(\varepsilon^{1/12}n)$, we have $$2b+a_4^* -a_1^* =\frac{n}{3}+O(\varepsilon^{1/12}n)\ge \frac{n}{4}\ge \frac{n_1}{4},$$
    for sufficiently small $\varepsilon$.
    Together with the assumption of this case, equality~\eqref{typo} implies
    \begin{align}\label{r1}
        F(\mathbf{a}^{\dagger} )\le F(\mathbf{a}^* )-\dfrac{n_1^2r}{52} \le O(\varepsilon^{1/12}n_{1}^{2}r)-\dfrac{n_1^2r}{52}.
    \end{align}
    On the other hand, by the fact that $\lfloor \|\mathbf{a}^{\dagger}  \|_1^2 /4\rfloor\ge (\|\mathbf{a}^{\dagger}  \|_1^2-1)/4$, we derive
    \begin{align*}
        F(\mathbf{a}^{\dagger} )
        \ge&\ (a_1^* )^2 a^{\dagger}_3 + (2b-a_1^* )^2 a_6^* -a_1^* \left( S(\mathbf{a}^{\dagger} ) - \frac{\|\mathbf{a}^{\dagger}  \|_1^2-1}{4}  \right) - b^2(\|\mathbf{a}^{\dagger}  \|_1-4b)\\
        = &\ \frac{1}{4}\left(
        (a_1^* -b)((12 a_6^* - 4 a^{\dagger}_3 )(a_1^* -b)-1)-b+a_1^* (a^{\dagger}_3 -a_6^* )^2\right)\\
        \ge &\ \frac{1}{4}\left(
        (a_1^* -b)((12 a_6^* - 4 a^{\dagger}_3 )(a_1^* -b)-1)-b\right).
    \end{align*}
    Recall that $a_1^* \ge b+1$, $|a_6^* -\frac{n}{6}|=O(\varepsilon^{1/12}n)$, $|a^{\dagger}_3 -\frac{n}{6}|=O(\varepsilon^{1/12}n)$, and $|b-\frac{n}{6}|\le\varepsilon n$. So
    \begin{align}\notag
        F(\mathbf{a}^{\dagger} )
        \ge &\ \frac{1}{4}\left(
        (a_1^* -b)\left(\left(\frac{4}{3}n+O(\varepsilon^{1/12}n)\right)(a_1^* -b)-1\right)-b\right)\\ \notag
        \ge &\ \frac{1}{4}\left(
        \frac{4}{3}n+O(\varepsilon^{1/12}n)-1-b\right)\\ \label{r2}
        =&\ \frac{1}{4}\left(
        \frac{7}{6} n +O(\varepsilon^{1/12}n)-1\right)>0.
    \end{align}
    If $r=0$, inequalities~\eqref{r1} and~\eqref{r2} directly contradict each other. If $r>0$, combining them yields $n_1^2r/52< O( \varepsilon^{1/12}n_1^2r)$, again a contradiction. This completes Case 1.
 
    \noindent\underline{\textbf{Case 2:}} We have $a_1^* = a_6^* $ or $a_1^* -2b+a_4^* > -\dfrac{n_1r}{13(a_1^* -a_6^* )}$.

    Set $a^{\dagger}_3 \coloneqq a_3^*+a_4^* +a_5^*-2a_1^* $ and $\mathbf{a}^{\dagger} \coloneqq (a_1^* ,a_2^*,a^{\dagger}_3 ,a_1^* ,a_1^* ,a_6^* )$. 
    Then, $\|\mathbf{a}^{\dagger}  \|_1=\|\mathbf{a}^*  \|_1=n_1$.
    Since $|a_1^* -\frac{n}{6}|$, $|a_3^*-\frac{n}{6}|$, and $|a_4^* -\frac{n}{6}|$ are all $O(\varepsilon^{1/12}n)$, we have $|a^{\dagger}_3 -\frac{n}{6}|=O(\varepsilon^{1/12}n)$.
    Recall from Lemma~\ref{claim218} that $a_1^* =a_2^*$ and $a_4^* =a_5^*$. So, we have
    \begin{align}\label{r3}
        H_2(\mathbf{a}^{\dagger} )-H_2(\mathbf{a}^* )
        = (a_1^* - a_4^* ) (a_1^*  - 2 b + a_4^* ).
    \end{align}
     If $a_1^* = a_6^* $, since $a_1^* \ge a_4^*\ge a_6^* $ from Lemma~\ref{claim218}, then $a_1^* =a_4^* $, and hence the value of equality~\eqref{r3} is zero. Otherwise, if $a_1^* -2b+a_4^* > -\dfrac{n_1r}{13(a_1^* -a_6^* )}$, then
    \begin{align*}
        (a_1^*  - a_4^* ) (a_1^*  - 2 b + a_4^* )\ge -\frac{n_1r(a_1^*  - a_4^* )}{13(a_1^* -a_6^* )}\ge -\frac{n_1r}{13}.
    \end{align*}
    In both cases, equality \eqref{r3} implies
    \begin{align}\label{r4}
        H_2(\mathbf{a}^{\dagger} )\ge H_2(\mathbf{a}^* )-\frac{n_1r}{13}\ge -O(\varepsilon^{1/12} n_1r)-\frac{n_1r}{13}.
    \end{align}
    On the other hand, by the fact that $\lfloor \| \mathbf{a}^{\dagger}  \|_1^2 /4\rfloor\ge (\| \mathbf{a}^{\dagger}  \|_1^2-1)/4$ and $\| \mathbf{a}^{\dagger}  \|_1=n_1$, we have
    \begin{align*}
        H_{2}(\mathbf{a}^{\dagger} )+\frac{n_1 r}{12}
        \le&\ S(\mathbf{a}^{\dagger} )-a^{\dagger}_3 (a_1^* -b)-\frac{\| \mathbf{a}^{\dagger}  \|_1^2-1}{4}\\
        = &\ \frac{1}{4}\left(1 - 4 a_1^*  a^{\dagger}_3   + 4 a^{\dagger}_3  b  - (a^{\dagger}_3 -a_6^* )^2\right)\\
        \le &\ \frac{1}{4}\left(1 - 4 a_1^*  a^{\dagger}_3   + 4 a^{\dagger}_3  b \right).
    \end{align*}
    By the assumption that $a_1^* \ge b+1$ and $a^{\dagger}_3 =\frac{n}{6}+O(\varepsilon^{1/12}n)>\frac{1}{4}$ (for sufficiently small $\varepsilon$), we have
    \begin{align*}
        H_{2}(\mathbf{a}^{\dagger} )+\frac{n_1r}{12}
        \le \frac{1}{4}\left(1 - 4 a^{\dagger}_3 \right)<0.
    \end{align*}
    If $r=0$, the last inequality gives $H_2(\mathbf{a}^{\dagger})<0$, while~\eqref{r4} gives $H_2(\mathbf{a}^{\dagger})\ge 0$, a contradiction. If $r>0$, combining the last inequality with~\eqref{r4} yields
    $$\frac{n_1r}{12}-\frac{n_1r}{13}< O(\varepsilon^{1/12}n_1r),$$
    again a contradiction. This completes Case 2 and establishes Theorem~\ref{MainTheorem}. 
   \end{proof}

\vspace{0.2cm}

\indent{\it Email address}: \texttt{ckz22000259@mail.ustc.edu.cn}
\vspace{0.2cm}

\indent{\it Email address}: \texttt{jiema@ustc.edu.cn}
\vspace{0.2cm}

\indent{\it Email address}: {wth1115060377@mail.ustc.edu.cn}

\end{document}